\documentclass[preprint,11pt]{elsarticle}
\usepackage{stmaryrd}
\usepackage{amsmath,amssymb,amsthm}
\usepackage{enumitem}
\usepackage{url}
\usepackage[colorlinks=true,linkcolor=blue,citecolor=blue,urlcolor=blue]{hyperref}

\newcommand{\R}{\mathbb R}

\def \Z{\Bbb Z}

\def \R{\Bbb R}

\def \wt{{\textit wt}}

\def \Res{{\rm Res}}
\def \End{{\rm End}}

\def \Id{{\rm Id}}
\def \Hom{{\rm Hom}}

\def \<{\langle}
\def \>{\rangle}

\def \1{{\bf 1}}

\def \({{\rm (}}
\def \){{\rm )}}

\def \1{{\bf 1}}

\def\Hom{{\rm Hom}}

\def\Res{{\rm Res}}

\newcommand{\boxtimesc}{\Box_V}
\newtheorem{Theorem}{Theorem}[section]
\newtheorem{Proposition}[Theorem]{Proposition}
\newtheorem{Lemma}[Theorem]{Lemma}

\newtheorem{Corollary}[Theorem]{Corollary}
\newtheorem{Remark}[Theorem]{Remark}
\newtheorem{Main Theorem}[Theorem]{Main Theorem}

\newtheorem{Definition}[Theorem]{Definition}
\begin{document}

\begin{frontmatter}

\title{Cotensor decompositions for comodules over graded vertex operator coalgebras}

\author[addr1]{Hao Wang\footnote{Email address: whaomath@nwu.edu.cn. Supported by a NSFC grant 12001426}}

\address[addr1]{School of Mathematics, Northwest University, Xi'an 710127, Shaanxi, China}

\begin{abstract}
Let $V$ be a graded vertex operator coalgebra and $\mathcal{C}$ the category of
admissible $V$-comodules. We first give the construction of contragredient comodule of an admissible
$V$-comodule.
We then introduce cointertwining operators among admissible $V$-comodules. Using cointertwining operators, we define the cotensor
decomposition of an admissible $V$-comodule by a universal property.
Finally, we prove that the complexified Grothendieck group
$
\mathbb{K}[\mathcal{C}]
=
\mathbb{C}\otimes_{\mathbb{Z}}K[\mathcal{C}]
$
is a cocommutative coassociative coalgebra. Moreover, this coalgebra is naturally dual to the fusion
algebra of the category of admissible modules over the dual vertex operator
algebra $V'$.
\end{abstract}

\begin{keyword}
vertex operator coalgebras, admissible comodules, contragredient comodules, cointertwining operators, cotensor decomposition
\MSC[2020] 17B69
\end{keyword}

\end{frontmatter}

\section{Introduction}

Vertex operator algebras were introduced in connection with two-dimensional conformal
field theory \cite{BPZ} and the moonshine module, and have become fundamental algebraic structures
in representation theory, conformal field theory, and related areas \cite{LL}. The representation
theory of vertex operator algebras is deeply connected with the theory of tensor
categories. In particular, the theory of tensor products for modules over a vertex
operator algebra was developed by Huang and Lepowsky in a series of works
\cite{HL1,HL2,HL3,Hu3}, and plays an essential role in the study of fusion
rules and modular tensor categories \cite{Hu2,Hu1,Z}. For a rational vertex operator algebra, the
Grothendieck group of its module category carries a natural commutative associative
algebra structure, whose structure constants are given by the fusion rules.

Motivated by classical algebras and Lie algebras \cite{M,M1, M2,NT}, there is also a coalgebraic counterpart of the theory of vertex operator algebras.
Vertex coalgebras and vertex operator coalgebras were introduced and studied by
Hubbard \cite{H,H1,H2}. Under suitable
finite-dimensionality assumptions, the graded dual of a vertex operator algebra is a
graded vertex operator coalgebra, and vice versa. This duality suggests that many
constructions in the representation theory of vertex operator algebras should admit
coalgebraic counterparts for comodules over vertex operator coalgebras. In \cite{W,W1}, we proved the dual Zhu's theory which is the coalgebraic part of Zhu's theory for vertex operator algebras \cite{DLM1,DLM2,DLM3,Z}. Now we are intending to study the comodule category of a vertex operator coalgebra, and this will help us to work on vertex operator bialgebras \cite{HLX,JKLT,XH} and nonlocal vertex coalgebras \cite{L4,L2} in future.

The purpose of this paper is to develop a coalgebraic analogue of the tensor product
theory for admissible modules. More precisely, let $V$ be a graded vertex operator
coalgebra and let $\mathcal{C}$ be the category of admissible $V$-comodules. We
introduce cointertwining operators among admissible $V$-comodules. These operators are
dual to intertwining operators among modules over the dual vertex operator
algebra $V'$ \cite{FHL,HL1,HL2,HL3}. If
$$
\mathcal {W}(z):\mathcal {M}^3\longrightarrow \mathcal {M}^1\otimes \mathcal {M}^2\{z\}
$$
is a cointertwining operator of type
\[
\binom{\mathcal {M}^1\quad \mathcal {M}^2}{\mathcal {M}^3},
\] then it should be regarded as a coalgebraic analogue of
an intertwining operator of type
$
\binom{(\mathcal {M}^3)'}{(\mathcal {M}^1)'\quad (\mathcal {M}^2)'}
$
for the vertex operator algebra $V'$.

Before introducing cointertwining operators, we study the contragredient comodule of an
admissible $V$-comodule. For an admissible $V$-comodule $\mathcal {M}$ with finite-dimensional
homogeneous subspaces, we construct a natural admissible $V$-comodule structure on its
graded dual $\mathcal {M}'$. This construction is dual to the contragredient module construction
of Frenkel, Huang, and Lepowsky \cite{FHL}. We also prove that $\mathcal {M}''$ is naturally
isomorphic to $\mathcal {M}$ as an admissible $V$-comodule.

Using cointertwining operators, we define the cotensor decomposition of an admissible
$V$-comodule by a universal property. This is the coalgebraic analogue of the tensor
product of modules for vertex operator algebras. In the module case, the tensor product
of two modules represents intertwining operators with fixed source modules. In the
coalgebraic setting, the direction of the operator is reversed. Thus an admissible
comodule $\mathcal {M}$ is decomposed into pairs of admissible comodules through cointertwining
operators.
If $V$ is simple and corational, then by the dual Zhu's theory developed in \cite{W},
the category $\mathcal{C}$ is semisimple with finitely many isomorphism classes of
simple admissible comodules. Let
$
\{\mathcal {M}^i\}_{i\in I}
$
be a complete set of representatives of simple admissible $V$-comodules. We prove that
the cotensor decomposition of $\mathcal {M}$ has the form
\[
\Delta_{\boxtimesc}([\mathcal {M}])
=
\sum_{i,j\in I}
N^{\mathcal {M}^i,\mathcal {M}^j}_{\mathcal {M}}
[\mathcal {M}^i]\boxtimes [\mathcal {M}^j],
\]
where the coefficient is the corresponding cointertwining fusion rule.

One of the main results of this paper is that the cotensor decomposition induces a
cocommutative and coassociative coproduct on the complexified Grothendieck group
\[
\mathbb{K}[\mathcal{C}]
=
\mathbb{C}\otimes_{\mathbb{Z}}K[\mathcal{C}].
\]
The cocommutativity follows from the skew-symmetry of cointertwining operators, while
the coassociativity follows from the associativity of tensor products for admissible
modules over the rational vertex operator algebra $V'$ \cite{Hu3}. More precisely, we establish a
duality between cointertwining fusion rules for $V$-comodules and ordinary fusion rules
for $V'$-modules:
\[
N^{\mathcal {M}^i,\mathcal {M}^j}_{\mathcal {M}^k}
=
N^{(\mathcal {M}^k)'}_{(\mathcal {M}^i)',(\mathcal {M}^j)'}.
\]
This identity allows us to transfer the associativity of the fusion algebra of $V'$ to
the coassociativity of the Grothendieck coalgebra of $\mathcal{C}$.

Finally, we compare the Grothendieck coalgebra of admissible $V$-comodules with the
fusion algebra of admissible modules over $V'$. We show that the coalgebra
$
\mathbb{K}[\mathcal{C}]
$
is naturally dual to the fusion algebra
$
\mathbb{K}[\mathcal{C}^{op}]
$
of the corresponding category of admissible $V'$-modules. Thus the cotensor theory
developed here provides a coalgebraic dual of the usual fusion theory for rational
vertex operator algebras \cite{Hu3,Hu2,Hu1}.

The paper is organized as follows. In Section 2, we recall basic definitions and facts
about algebras, coalgebras, vertex operator algebras, vertex operator coalgebras, and
admissible comodules. In Section 3, we construct the contragredient comodule of an
admissible $V$-comodule. In Section 4, we introduce cointertwining operators and prove
their basic symmetry and duality properties. In Section 5, we define cotensor
decompositions and prove that the complexified Grothendieck group of admissible
$V$-comodules is a cocommutative coassociative coalgebra. In Section 6, we relate this
coalgebra to the fusion algebra of the graded dual vertex operator algebra $V'$.

In this paper, we work over complex field $\mathbb{C},$ all vector spaces, linear maps will be over $\mathbb{C},$ $\otimes$ means $\otimes _\mathbb{C}$. $\Res_zf(z)$ is the coefficient of $z^{-1}$. Furthermore, we have $$\Res_z\frac{d}{dz}f(z)\cdot g(z)=-\Res_zf(z)\cdot\frac{d}{dz}g(z).$$
For any vector spaces $V,W$, the flipping map $\tau:V\otimes W\rightarrow W\otimes V$ is defined by $$\tau(v\otimes w)=w\otimes v,$$ for any $v\in V,w\in W.$ In this paper, we always view $V\otimes \mathbb{C}=\mathbb{C}\otimes V=V$ naturally. For $n\in\mathbb{C}$, $(z_1+z_2)^n=\sum_{i\geq0}\tbinom{n}{i}z_1^{n-i}z_2^i.$ Let $V$ be a vector space, denote $$V^*=\Hom(V,\mathbb{C})$$ by the dual space of $V$. Let $(\cdot,\cdot)$ be the natural pair between $V^*$ and $V$.

\section{Preliminaries}
In this section, we recall several definitions and results about algebras, coalgebras, vertex operator algebras and vertex operator coalgebras.

\subsection{Algebras and coalgebras}
\begin{Definition}\cite{DNR}\label{DA2.1}
  An associative algebra is a triple $(A,\mu,\eta)$, where $A$ is a vector space, $\mu: A\otimes A\rightarrow A$ and $\eta:\mathbb{C}\rightarrow A$ are linear maps such that:

  (i) For any $a\in A$, we have $\mu(a\otimes\eta(1))=\mu(\eta(1)\otimes a)=a$. They are called unit identities.

  (ii) For any $a,b,c\in A$, the following identity $$\mu(a\otimes\mu(b\otimes c))=\mu(\mu(a\otimes b)\otimes c)$$ holds. This identity is called associativity. We write $\mu(a\otimes b)=ab$.
\end{Definition}

\begin{Definition}\cite{DNR}\label{DC2.1}
  A coassociative coalgebra is a triple $(C,\Delta,\epsilon)$, where $C$ is a vector space, $\Delta:C\rightarrow C\otimes C$ and $\epsilon:C\rightarrow \mathbb{C}$ are linear maps such that:

  (i) For any $a\in C$, the following identities $$(\Id_C\otimes \epsilon)\circ \Delta(a)=a=(\epsilon\otimes \Id_C)\circ \Delta(a)$$ hold. They are called counit identities.

  (ii) For any $a\in C$, the following identity $$(\Id_C\otimes \Delta)\circ \Delta(a)=(\Delta\otimes \Id_C)\circ \Delta(a)$$ holds. This identity is called coassociativity.
\end{Definition}

\begin{Definition}\cite{DNR}
  A coassociative coalgebra $(C,\Delta,\epsilon)$ is called cocommutative if $$\tau\circ\Delta=\Delta.$$
\end{Definition}

\begin{Proposition}\cite{DNR,K}\label{A-C}
Let $(A,\mu,\eta)$ be an algebra and $(C,\Delta,\epsilon)$ a coalgebra. Then, we have

(i) $C^*$ is an algebra with $$(\mu_{C^*}(f\otimes g),a)=\sum f(a')g(a''),\eta_{C^*}(1)=\epsilon,$$ where $f,g\in C^*, a\in C$.

(ii) $A^\circ$ is a coalgebra with $$(\Delta_{A^\circ}(f),a\otimes b)=f(ab),\epsilon_{A^\circ}(f)=f(\eta(1)),$$ where $f\in A^\circ, a,b\in A.$ If $\dim A<\infty,$ then $A^\circ=A^*.$
\end{Proposition}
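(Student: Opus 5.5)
We verify the axioms of Definition \ref{DA2.1} and Definition \ref{DC2.1} directly, working with Sweedler notation $\Delta(a)=\sum a'\otimes a''$. The only subtle point is that all identities must be tested against elements of the underlying space. We use the natural pairing between $C^*\otimes C^*$ and $C\otimes C$, given by $(f\otimes g, a\otimes b)=f(a)g(b)$.

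For part (i), the product is $\mu_{C^*}(f\otimes g)=(f\otimes g)\circ\Delta$, which is a linear functional on $C$, hence an element of $C^*$.
\begin{itemize}
\item \emph{Unit.} The counit identity gives $(\mu_{C^*}(f\otimes\epsilon),a)=\sum f(a')\epsilon(a'')=f\bigl((\Id_C\otimes\epsilon)\Delta(a)\bigr)=f(a)$. The other unit identity is symmetric.
\item \emph{Associativity.} Evaluating $f(gh)$ and $(fg)h$ at $a$ gives $(f\otimes g\otimes h)\bigl((\Id_C\otimes\Delta)\Delta(a)\bigr)$ and $(f\otimes g\otimes h)\bigl((\Delta\otimes\Id_C)\Delta(a)\bigr)$ respectively. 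These agree by coassociativity.
\end{itemize}

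For part (ii), recall that $A^\circ=\{f\in A^*:\ker f \text{ contains a two-sided ideal of finite codimension}\}$. The key step, and the main obstacle, is to show that $\mu^*(f)\colon a\otimes b\mapsto f(ab)$ actually lies in $A^\circ\otimes A^\circ\subset (A\otimes A)^*$, and that its components again lie in $A^\circ$. We argue as follows.
\begin{itemize}
\item Take a cofinite ideal $I\subseteq\ker f$. Then $f$ factors through the finite-dimensional algebra $A/I$.
\item Choose a basis $\bar e_1,\dots,\bar e_n$ of $A/I$ with structure constants $\bar e_i\bar e_j=\sum_k c_{ij}^k\bar e_k$. Let $\bar e_i^*$ be the dual basis, pulled back to functionals $e_i^*\in A^*$ that vanish on $I$.
\item Then $f(ab)=\sum_{i,j}f(\bar e_i\bar e_j)\,e_i^*(a)e_j^*(b)$, which is a finite sum of elements $e_i^*\otimes e_j^*$.
\item Each $e_i^*$ kills $I$, so $e_i^*\in A^\circ$, and hence $\Delta_{A^\circ}(f)\in A^\circ\otimes A^\circ$.
\end{itemize}
Since $A^*\otimes A^*\to(A\otimes A)^*$ is injective, this element is uniquely determined, so $\Delta_{A^\circ}$ is well defined.

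With well-definedness in hand, the axioms follow by dualizing.
\begin{itemize}
\item \emph{Counit.} Pairing $(\Id\otimes\epsilon_{A^\circ})\Delta_{A^\circ}(f)$ with $a$ gives $f(a\eta(1))=f(a)$, by the unit identity. The other counit identity is symmetric.
\item \emph{Coassociativity.} Both $(\Id\otimes\Delta)\Delta(f)$ and $(\Delta\otimes\Id)\Delta(f)$ pair with $a\otimes b\otimes c$ to give $f(a(bc))=f((ab)c)$, by associativity.
\end{itemize}
These equalities hold in $(A\otimes A\otimes A)^*$, into which $A^{\circ\otimes 3}$ injects, so they hold in $A^{\circ\otimes 3}$.

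Finally, suppose $\dim A<\infty$. Then the zero ideal has finite codimension and lies in every kernel, so $A^\circ=A^*$. In this case the argument above is just transposition of $\mu$ and $\eta$ under the isomorphism $(A\otimes A)^*\cong A^*\otimes A^*$.
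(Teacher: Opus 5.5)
The paper gives no proof of this proposition; it is quoted as standard background from \cite{DNR,K}. Your argument is correct and is the standard one. The only nontrivial point is that $\mu^*(f)$ lies in $A^\circ\otimes A^\circ$, and you handle it properly: you factor $f$ through the finite-dimensional algebra $A/I$ and expand in a dual basis whose elements vanish on $I$.

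For completeness you could add that $A^\circ$ is a subspace. If $I\subseteq\ker f$ and $J\subseteq\ker g$ are cofinite ideals, then $I\cap J\subseteq\ker(f+g)$ is again a cofinite ideal, because $A/(I\cap J)$ embeds in $A/I\oplus A/J$. Also, the structure constants $c_{ij}^k$ you introduce are never used.
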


\subsection{Vertex operator algebras}
 \begin{Definition}\cite{LL}\label{Def2.1}
  Let $\mathcal {V}=\oplus_{s\in \mathbb{Z}}\mathcal {V}_s$ be a $\mathbb{Z}$-graded vector space with $\mathcal {V}_s=0$, $0\gg s$, $\dim \mathcal {V}_s<\infty,\forall s\in \mathbb{Z}$, and $\textbf{1}\in \mathcal {V}_0,\omega\in \mathcal {V}_2,Y(\cdot,z):\mathcal {V}\otimes \mathcal {V}\rightarrow \End(\mathcal {V})[[z,z^{-1}]],u\otimes v\mapsto Y(u,z)v=\sum_{t\in\mathbb{Z}}u_tvz^{-t-1}$, where $u_t\in \End(\mathcal {V})$. Then, $(\mathcal {V},Y,\textbf{1},\omega)$ is called a vertex operator algebra if the following hold:

  (i) For $ u,v\in \mathcal {V},$ $u_tv=0$, if $t\gg0$.

  (ii) $Y(\textbf{1},z)v=v,\lim_{z\rightarrow0}Y(v,z)\textbf{1}=v$, for $ v\in \mathcal {V}$.

  (iii) Write $Y(\omega,z)=\sum_{t\in\mathbb{Z}}\omega_tz^{-t-1}=\sum_{t\in\mathbb{Z}}L_\mathcal {V}(t)z^{-t-2}$, then
  \begin{eqnarray*}
  &&\mathcal {V}_s=\{v\in \mathcal {V}|L_\mathcal {V}(0)v=sv\},~Y(L_\mathcal {V}(-1)v,z)=\frac{d}{dz}Y(v,z),\\
  &&[L_\mathcal {V}(p),L_\mathcal {V}(q)]=(p-q)L_\mathcal {V}(p+q)+\delta_{p+q,0}\frac{p^3-p}{12}d,
  \end{eqnarray*}
  where $d\in\mathbb{C}$ is called central charge of $\mathcal {V}$. For $v\in \mathcal {V}_s$, $v$ is said to be homogeneous and the weight $\wt v$ of $v$ is defined to be $s$.

  (iv) For $ u,v\in \mathcal {V}$, we have
  \begin{eqnarray*}
  & &z_0^{-1}\delta(\frac{z_1-z_2}{z_0})Y(u,z_1)Y(v,z_2)-z_0^{-1}\delta(\frac{-z_2+z_1}{z_0})Y(v,z_2)Y(u,z_1)\\
  & &\ \ \ \ \ =z_1^{-1}\delta(\frac{z_2+z_0}{z_1})Y(Y(u,z_0)v,z_2).
  \end{eqnarray*}
  \end{Definition}

\begin{Definition}\cite{LL}\label{Def2.2}
  Let $\mathcal {V}$ be a vertex operator algebra. A {\em weak  $\mathcal {V}$-module} $\mathcal {N}$ is a vector space equipped
with a linear map
\begin{align*}
Y_{\mathcal {N}}:\mathcal {V}&\to (\End \mathcal {N})[[z, z^{-1}]],\\
v&\mapsto Y_{\mathcal {N}}(v,z)=\sum_{s\in\Z}v_sz^{-s-1},\,v_s\in \End \mathcal {N},
\end{align*}
satisfying the following conditions: For any $u\in \mathcal {V},\ v\in \mathcal {V},\ w\in \mathcal {N}$ and $t\in \Z$,
\begin{align*}
&\ \ \ \ \ \ \ \ \ \ \ \ \ \ \ \ \ \ \ \ \ \ \ \ \ \ u_tw=0 \text{ for } t\gg0;\\
&\ \ \ \ \ \ \ \ \ \ \ \ \ \ \ \ \ \ \ \ \ \ \ \ \ \ Y_\mathcal {N}(\1, x)=\Id_\mathcal {N};\\
\begin{split}
&x_{0}^{-1}\delta\left(\frac{z_{1}-z_{2}}{z_{0}}\right)Y_{\mathcal {N}}(u,z_{1})Y_\mathcal {N}(v,z_{2})-z_{0}^{-1}\delta\left(
\frac{z_{2}-z_{1}}{-z_{0}}\right)Y_\mathcal {N}(v,z_{2})Y_\mathcal {N}(u,z_{1})\\
&\quad=x_{2}^{-1}\delta\left(\frac{z_{1}-z_{0}}{z_{2}}\right)Y_\mathcal {N}(Y(u,z_{0})v,z_{2}).
\end{split}
\end{align*}

A weak
 $\mathcal {V}$-module  $\mathcal {N}$ is called an \textit{admissible $\mathcal {V}$-module} if $\mathcal {N}$ has a $\Z_{\geq
0}$-gradation $\mathcal {N}=\bigoplus_{s\in\Z_{\geq 0}}N(s)$ such
that
\begin{align*}\label{AD1}
v_tN(s)\subset N(\wt{v}+s-t-1)
\end{align*}
for any homogeneous $v\in \mathcal {V}$ and $s,\,t\in\Z$.

An admissible $\mathcal {V}$-module $\mathcal {N}$ is said to be
\textit{irreducible} if $\mathcal {N}$ has no non-trivial admissible
$\mathcal {V}$-submodule. When an admissible $\mathcal {V}$-module $\mathcal {N}$ is
direct sum of irreducible admissible submodules, $\mathcal {N}$ is called
\textit{completely reducible}.

$\mathcal {V}$ is called rational if every admissible $\mathcal {V}$-module is completely reducible.
\end{Definition}

\begin{Definition}\cite{FHL}\label{Def2.3}
Let $(\mathcal {N},Y_\mathcal {N})$ be an admissible $\mathcal {V}$-module. The contragredient $\mathcal {V}$-module $(\mathcal {N}',Y'_{\mathcal {N}'})$ is defined as follows:
\begin{eqnarray*}
&&\mathcal {N}'=\bigoplus_{s\in\Z_{\geq 0}}\mathcal {N}(s)^*,\\
&&(Y'_{\mathcal {N}'}(v,z)n^*,n)=(n^*,Y_\mathcal {N}(e^{zL(1)}(-z^{-2})^{L(0)}v,z^{-1})n),
\end{eqnarray*}
for any $n^*\in \mathcal {N}',v\in \mathcal {V},n\in \mathcal {N}$. Furthermore, if all homogeneous subspaces of $\mathcal {N}$ are finite dimensional, then $\mathcal {N}''\cong\mathcal {N}$ as admissible $\mathcal {V}$-modules.
\end{Definition}

\begin{Definition}\cite{FHL}\label{Def2.4}
  Let $\mathcal {N}_1,\mathcal {N}_2,\mathcal {N}_3$ be three weak $\mathcal {V}$-modules. An intertwining operator $\mathcal {Y}(\cdot,z)$ of type $\tbinom{\mathcal {N}_3}{\mathcal {N}_1~~\mathcal {N}_2}$ is a linear map
   \begin{eqnarray*}
  \mathcal {Y}(\cdot,z):~\mathcal {N}_1&\rightarrow&~\Hom(\mathcal {N}_2,\mathcal {N}_3)\{z\},\\
  n_1&\mapsto& \mathcal {Y}(n_1,z)=\sum_{t\in\mathbb{C}}n_{1,t}z^{-t-1},
  \end{eqnarray*}
  where $n_{1,t}\in \Hom(\mathcal {N}_2,\mathcal {N}_3)$, satisfying the following conditions:

  (i) For any $n_1\in \mathcal {N}_1,n_2\in \mathcal {N}_2,\lambda\in\mathbb{C}$, $n_{1,t+\lambda}n_2=0$ for $t\in\mathbb{Z}$ sufficiently large.

  (ii) For any $n_1\in \mathcal {N}_1$, $\frac{d}{dz}\mathcal {Y}(n_1,z)=\mathcal {Y}(L(-1)n_1,z)$.

  (iii) For any $v\in V, n_1\in \mathcal {N}_1$,
  \begin{eqnarray*}
  z_0^{-1}\delta(\frac{z_1-z_2}{z_0})Y_{\mathcal {N}_3}(v,z_1)\mathcal {Y}(n_1,z_2)-z_0^{-1}\delta(\frac{-z_2+z_1}{z_0})\mathcal {Y}(n_1,z_2)Y_{\mathcal {N}_2}(v,z_1)\\
  =z_2^{-1}\delta(\frac{z_1-z_0}{z_2})\mathcal {Y}(Y_{\mathcal {N}_1}(v,z_0)n_1,z_2),
  \end{eqnarray*}
  \end{Definition}

  Obviously, all the intertwining operators of type $\tbinom{\mathcal {N}_3}{\mathcal {N}_1~~\mathcal {N}_2}$ form a vector space, denoted by $I_\mathcal {V}\tbinom{\mathcal {N}_3}{\mathcal {N}_1~~\mathcal {N}_2}.$
  The numbers $N_{\mathcal {N}_1,\mathcal {N}_2}^{\mathcal {N}_3}=\dim I_\mathcal {V}\tbinom{\mathcal {N}_3}{\mathcal {N}_1~~\mathcal {N}_2}$
  are called the fusion rules.

\begin{Proposition}\cite{HL1,HL2,HL3,L3}\label{Prop2.5}
Let $\mathcal {V}$ be a rational vertex operator algebra and $\mathcal {N}_1,\mathcal {N}_2,\mathcal {N}_3$ be three admissible $\mathcal {V}$-modules. Then we have $$N_{\mathcal {N}_1,\mathcal {N}_2}^{\mathcal {N}_3}=N_{\mathcal {N}_2,\mathcal {N}_1}^{\mathcal {N}_3},~N_{\mathcal {N}_1,\mathcal {N}_2}^{\mathcal {N}_3}=N_{\mathcal {N}_1,\mathcal {N}'_3}^{\mathcal {N}'_2}.$$
\end{Proposition}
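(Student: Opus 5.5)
The two identities in Proposition \ref{Prop2.5} come from two symmetry operations on intertwining operators. I would build, for each identity, an explicit linear isomorphism between the relevant spaces of intertwining operators. Rationality of $\mathcal{V}$ is used only to keep everything inside the category of admissible modules. It guarantees that $\mathcal{N}_1,\mathcal{N}_2,\mathcal{N}_3$ are direct sums of irreducibles, and hence have finite-dimensional homogeneous pieces once we reduce to irreducibles, since fusion rules are additive in each slot. This makes $\mathcal{N}'_2$, $\mathcal{N}'_3$ and the identification $\mathcal{N}''\cong\mathcal{N}$ of Definition \ref{Def2.3} available. The dimensions are then equal.

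For $N_{\mathcal{N}_1,\mathcal{N}_2}^{\mathcal{N}_3}=N_{\mathcal{N}_2,\mathcal{N}_1}^{\mathcal{N}_3}$ I would use the skew-symmetry operator. For $\mathcal{Y}\in I_{\mathcal{V}}\tbinom{\mathcal{N}_3}{\mathcal{N}_1~\mathcal{N}_2}$ define
\[
\Omega(\mathcal{Y})(n_2,z)n_1=e^{zL(-1)}\mathcal{Y}(n_1,e^{\pi i}z)n_2 .
\]
The steps are as follows. First check the truncation condition (i) and the $L(-1)$-derivative property (ii). The latter uses $[L(-1),\mathcal{Y}(n_1,z)]=\frac{d}{dz}\mathcal{Y}(n_1,z)$, which follows from (iii) applied with $v=\omega$. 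Next derive the Jacobi identity (iii) for $\Omega(\mathcal{Y})$. Here one uses the conjugation formula $e^{zL(-1)}Y(v,z_1)e^{-zL(-1)}=Y(v,z_1+z)$ together with the $\delta$-function substitution $z_0\mapsto -z_0$ and the swap of the two sides of the Jacobi identity, exactly as in the skew-symmetry argument for vertex operator algebras. Finally observe that $\Omega_{-1}\circ\Omega_{1}=\Id$, where the second map uses $e^{-\pi i}$. So $\Omega$ is a bijection and the first equality follows.

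For $N_{\mathcal{N}_1,\mathcal{N}_2}^{\mathcal{N}_3}=N_{\mathcal{N}_1,\mathcal{N}'_3}^{\mathcal{N}'_2}$ I would use the contragredient (adjoint) operator $A(\mathcal{Y})$ of type $\tbinom{\mathcal{N}'_2}{\mathcal{N}_1~\mathcal{N}'_3}$, defined by
\[
(A(\mathcal{Y})(n_1,z)n'_3,n_2)=(n'_3,\mathcal{Y}(e^{zL(1)}e^{\pi i L(0)}z^{-2L(0)}n_1,z^{-1})n_2).
\]
Truncation holds because of the grading. The $L(-1)$-property follows from the standard $\mathfrak{sl}_2$ conjugation identities for $e^{zL(1)}$ and $z^{-2L(0)}$. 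The Jacobi identity is obtained by pairing with $n_2$ and moving $Y_{\mathcal{N}'_3}$ and $Y_{\mathcal{N}'_2}$ across using the definition of $Y'$ in Definition \ref{Def2.3}. This reduces the claim to the Jacobi identity for $\mathcal{Y}$ with the variables inverted, via the formula
\[
e^{zL(1)}(-z^{-2})^{L(0)}Y(v,z_0)=Y\bigl(e^{(z+z_0)L(1)}(-(z+z_0)^{-2})^{L(0)}v,\,\tfrac{-z_0}{z(z+z_0)}\bigr)e^{zL(1)}(-z^{-2})^{L(0)}
\]
from \cite{FHL}. Applying $A$ twice and using $\mathcal{N}''\cong\mathcal{N}$ returns $\mathcal{Y}$, up to the factor $e^{2\pi iL(0)}$ acting on $n_1$, which is invertible. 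Hence $A$ is an isomorphism.

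The main obstacle is the Jacobi identity for $A(\mathcal{Y})$. The change of variables $z\mapsto z^{-1}$ inside the formal $\delta$-functions, together with the conjugation formula above, is the delicate formal-calculus step. I would import it essentially verbatim from the proof in \cite{FHL}, Section 5.5, rather than redo it. Non-integral powers of $z$ also need care: branches must be fixed consistently via $e^{\pi i}$ and $e^{\pi i L(0)}$ so that $A$ and $\Omega$ are well defined on $\mathcal{N}_1\to\Hom(\cdot,\cdot)\{z\}$.
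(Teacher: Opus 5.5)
Your proposal is correct and follows the standard argument behind this cited result in \cite{FHL} and Huang--Lepowsky: the skew-symmetry map $\Omega$ and the adjoint map $A$ give linear isomorphisms $I_{\mathcal{V}}\binom{\mathcal{N}_3}{\mathcal{N}_1\ \mathcal{N}_2}\cong I_{\mathcal{V}}\binom{\mathcal{N}_3}{\mathcal{N}_2\ \mathcal{N}_1}$ and $I_{\mathcal{V}}\binom{\mathcal{N}_3}{\mathcal{N}_1\ \mathcal{N}_2}\cong I_{\mathcal{V}}\binom{\mathcal{N}'_2}{\mathcal{N}_1\ \mathcal{N}'_3}$. The paper gives no proof of its own here and only cites \cite{HL1,HL2,HL3,L3}, but its comodule analogues, Propositions \ref{Prop4.3} and \ref{Prop4.4}, use the same two operators $\Omega$ and $\mathcal{A}$.
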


\begin{Definition}\cite{HL1,HL2,HL3}\label{Def2.6}
Let $\mathcal {N}_1,\mathcal {N}_2$ be two admissible $\mathcal {V}$-modules. An admissible $\mathcal {V}$-module $(\mathcal {N},I)$ is called a tensor product of $\mathcal {N}_1$ and $\mathcal {N}_2$, where $I\in I_\mathcal {V}\tbinom{\mathcal {N}}{\mathcal {N}_1~~\mathcal {N}_2}$, if for any admissible $\mathcal {V}$-module $\mathcal {N}_3$ and any intertwining operator $\mathcal {Y}\in I_\mathcal {V}\tbinom{\mathcal {N}_3}{\mathcal {N}_1~~\mathcal {N}_2}$, there is a unique $\mathcal {V}$-module homomorphism $\varphi:\mathcal {N}\rightarrow \mathcal {N}_3$, such that $\mathcal {Y}=\varphi\circ I$. We denote $(\mathcal {N},I)$ by $\mathcal {N}_1\boxtimes_\mathcal {V}\mathcal {N}_2$.
\end{Definition}

\begin{Proposition}\cite{HL1,HL2,HL3,Hu3,L3}\label{Prop2.7}
Let $\mathcal {V}$ be a rational vertex operator algebra. The tensor product $\boxtimes_\mathcal {V}$ is commutative and associative.
\end{Proposition}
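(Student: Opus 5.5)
Proposition~\ref{Prop2.7} is cited from the literature \cite{HL1,HL2,HL3,Hu3,L3}, so my plan is to reconstruct the standard argument in outline and not to build anything new.

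\emph{Existence and uniqueness.} Rationality of $\mathcal V$ gives finitely many irreducible admissible modules $\{\mathcal N^i\}$, every admissible module is completely reducible, and the fusion rules $N^{k}_{i,j}$ are finite. I would therefore define
\[
\mathcal N_1\boxtimes_\mathcal V\mathcal N_2=\bigoplus_k I_\mathcal V\tbinom{\mathcal N^k}{\mathcal N_1~~\mathcal N_2}^*\otimes\mathcal N^k ,
\]
with $I$ the canonical intertwining operator. The universal property of Definition~\ref{Def2.6} then follows from Schur's lemma and complete reducibility of the target $\mathcal N_3$. Uniqueness up to unique isomorphism is formal from the universal property.

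\emph{Commutativity.} The skew-symmetry operator
\[
\Omega(\mathcal Y)(n_1,z)n_2=e^{zL(-1)}\mathcal Y(n_2,e^{\pi i}z)n_1
\]
gives a linear isomorphism
\[
I_\mathcal V\tbinom{\mathcal N_3}{\mathcal N_1~~\mathcal N_2}\cong I_\mathcal V\tbinom{\mathcal N_3}{\mathcal N_2~~\mathcal N_1},
\]
natural in $\mathcal N_3$. This is the first identity of Proposition~\ref{Prop2.5} in functorial form. Applying it to the universal operator gives an element of $I_\mathcal V\tbinom{\mathcal N_2\boxtimes\mathcal N_1}{\mathcal N_1~~\mathcal N_2}$, and the universal property then yields an isomorphism $\mathcal N_1\boxtimes\mathcal N_2\to\mathcal N_2\boxtimes\mathcal N_1$.

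\emph{Associativity.} This is the main step. One must show that products $\mathcal Y_1(n_1,z_1)\mathcal Y_2(n_2,z_2)$, with intermediate module $\mathcal N_2\boxtimes\mathcal N_3$, and iterates $\mathcal Y^3(\mathcal Y^4(n_1,z_0)n_2,z_2)$, with intermediate module $\mathcal N_1\boxtimes\mathcal N_2$, span the same spaces of multivalued analytic functions in the region $|z_1|>|z_2|>|z_1-z_2|>0$ after pairing with $\mathcal N_4'$. This is Huang's associativity theorem \cite{Hu3}, and the analytic input is not formal. For $C_2$-cofinite $\mathcal V$ the proof runs as follows.
\begin{enumerate}[label=(\roman*)]
\item Matrix elements of products of intertwining operators satisfy regular-singular ODEs, which gives convergence and analytic continuation.
\item The convergence and extension property lets one expand products as iterates.
\item Summing over the simple intermediate modules gives the equality of the two $4$-point spaces, which by duality is
\[
\sum_k N^{k}_{1,2}N^{4}_{k,3}=\sum_k N^{k}_{2,3}N^{4}_{1,k}.
\]
\end{enumerate}
Counting dimensions and applying the universal property twice then gives a natural isomorphism $(\mathcal N_1\boxtimes\mathcal N_2)\boxtimes\mathcal N_3\cong\mathcal N_1\boxtimes(\mathcal N_2\boxtimes\mathcal N_3)$.

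I expect this analytic convergence and extension step to be the obstacle. Since the paper only needs the statement, I would simply invoke \cite{Hu3} for it rather than reprove it.
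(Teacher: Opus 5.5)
Your proposal matches the paper, which gives no argument for this proposition beyond citing \cite{HL1,HL2,HL3,Hu3,L3}; sketching existence and commutativity yourself and deferring the analytic convergence-and-extension step to \cite{Hu3} amounts to the same thing. One caveat, which you half-acknowledge by writing ``for $C_2$-cofinite $\mathcal V$'': Huang's associativity theorem, and the finiteness of fusion rules you derive from rationality, are only established under extra hypotheses (finite fusion rules are built into the Huang--Lepowsky notion of rationality, and the convergence and extension property is known under conditions such as $C_2$-cofiniteness and a CFT-type grading), so neither your outline nor the paper's citation literally covers an arbitrary $\mathcal V$ that is rational in the sense of Definition~\ref{Def2.2}.
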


\subsection{Vertex operator coalgebras}

\begin{Definition}\cite{H,H2}\label{Def2.8}
  A vertex coalgebra is a triple $(V,\Yup (z),c)$, where $V$ is a vector space, $c: V\rightarrow\mathbb{C}$ is a linear map, $\Yup (z):V\rightarrow V\otimes V[[z,z^{-1}]]$, $v\mapsto \sum_{k\in\mathbb{Z}}\Delta_k(v)z^{-k-1}$ is a linear map, which satisfy following conditions:

  (i) For any $v\in V$, we have $\Delta_k(v)$ is a finite sum and $\Delta_k(v)=0$ for $k<<0.$

  (ii) For any $v\in V$, we have
    \begin{eqnarray}\label{LU3.1}
      (c\otimes \Id)\circ\Yup (z)v=v,
    \end{eqnarray} and
    \begin{eqnarray}\label{CI3.2}
      (\Id\otimes c)\circ\Yup (z)v\in V[[z]], \label{CI3.2}\\
      \lim_{z\rightarrow 0}(\Id\otimes c)\circ\Yup (z)v=v.\label{CI3.3}
    \end{eqnarray}
$c$ is called the counit.

  (iii) The following identity
    \begin{eqnarray}\label{JI3.4}
      &&z_0^{-1}\delta(\frac{z_1-z_2}{z_0})(\Id\otimes \Yup (z_2))\circ \Yup (z_1)-z_0^{-1}\delta(\frac{z_2-z_1}{-z_0})(\tau\otimes\Id)\circ(\Id\otimes \Yup (z_1))\circ \Yup (z_2)\nonumber\\
      &&\ \ \ \ \ =z_1^{-1}\delta(\frac{z_2+z_0}{z_1})(\Yup (z_0)\otimes \Id)\circ \Yup (z_2)
    \end{eqnarray}
  holds on $V$. This identity is also called Jacobi identity.

  $A$ vertex coalgebra $(V,\Yup(z),c)$ is called graded if there is a $\mathbb{Z}$-gradation $V=\oplus_{s\in\mathbb{Z}}V_s$ on $V$ such that each homogeneous space is finite dimensional and $V_s=0$ for $s<<0.$ View $V\otimes V$ as a $\mathbb{Z}$-graded vector space with natural gradation, i.e., $$(V\otimes V)_t=\oplus_{s\in\mathbb{Z}}V_{t-s}\otimes V_{s}.$$ $\Delta_k$ is a homogeneous linear map of degree $k+1$, i.e., for any $v\in V_s$, we have $$\Delta_k(v)\in (V\otimes V)_{s+k+1}.$$

  A $\mathbb{Z}$-graded vertex coalgebra is called a $\mathbb{Z}$-graded vertex operator coalgebra, if there is another linear map $\rho:V\rightarrow \mathbb{C}$, and write $(\rho\otimes \Id)\circ\Yup (z)=\sum_{k\in\mathbb{Z}}L_V(k)z^{k-2}$, then $L_V(k)\in\End(V)$ and the following Virasoro identity
    \begin{eqnarray}\label{VI3.5}
     [L_V(k),L_V(j)]=(k-j)L_V(k+j)+\frac{k^3-k}{12}\delta_{k+j,0}d
    \end{eqnarray}
  holds on $V$. $d$ is called the rank of $V$. Furthermore, $V_s$ is an eigenspace of $L_V(0)$ with eigenvalue $s$ and the following identity
    \begin{eqnarray}\label{L(1D)3.6}
     (L_V(1)\otimes\Id)\circ \Yup (z)=\frac{d}{dz}\Yup (z)=\Yup(z)\circ L_V(1)-\Id\otimes L_V(1)\circ \Yup(z)
    \end{eqnarray}
  holds on $V$, called $L_V(1)$-derivation identity. For any $v\in V_s$, we say $v$ is homogeneous of weight $s$, denoted by $s=\wt v.$

  If there is no confusion, we may say $V$ is a graded vertex operator coalgebra for simplicity.
\end{Definition}

\begin{Lemma}\label{Lm2.9}\cite{H2}
  Let $(V,\Yup (z),c)$ be a graded vertex coalgebra. Then $$\tau\circ\Yup(z)=\Yup(-z)\circ\exp^{zD^*},$$ where $D^*=\Res_zz^{-2}(\Id\otimes c)\circ\Yup(z)$. It is obvious that $D^*$ is homogeneous of degree $-1$. This is called skew-symmetry.

  Furthermore, if $(V,\Yup (z),c,\rho)$ is a graded vertex operator coalgebra, we have$$\tau\circ\Yup(z)=\Yup(-z)\circ\exp^{zL_V(1)}.$$
\end{Lemma}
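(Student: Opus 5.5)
The plan is to deduce the skew-symmetry from the classical skew-symmetry of vertex algebras by passing to the graded dual. Set $V'=\oplus_{s\in\mathbb{Z}}V_s^*$. Since each $V_s$ is finite dimensional and $V_s=0$ for $s\ll0$, the natural pairing identifies $V'\otimes V'$ with the graded dual of $V\otimes V$. We define
$$
(Y'(a,z)b,v)=(a\otimes b,\Yup(z)v),\qquad a,b\in V',\ v\in V,
$$
and take the vacuum $\1':=c$.

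\textbf{Step 1: $V'$ is a vertex algebra.} Because $\Delta_k$ is homogeneous of degree $k+1$, the coefficient $a_kb$ is a well-defined element of $V'$. For homogeneous $a,b$, only finitely many pairings are nonzero in each weight, and condition (i) of Definition \ref{Def2.8} ($\Delta_k(v)=0$ for $k\ll0$), combined with the weight constraint, gives the truncation $a_kb=0$ for $k\gg0$. Each remaining axiom is the transpose of one in Definition \ref{Def2.8}:
\begin{itemize}
\item (\ref{LU3.1}) transposes to $Y'(\1',z)=\Id$;
\item (\ref{CI3.2}) and (\ref{CI3.3}) transpose to the creation property $Y'(a,z)\1'\in V'[[z]]$ with $\lim_{z\to0}Y'(a,z)\1'=a$;
\item (\ref{JI3.4}) transposes to the Jacobi identity of Definition \ref{Def2.1}(iv).
\end{itemize}
For the Jacobi identity, pair both sides with $a\otimes b\otimes w$. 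The operator $(\Id\otimes\Yup(z_2))\circ\Yup(z_1)$ transposes to $Y'(a,z_1)Y'(b,z_2)w$. The $\tau$-term transposes to $Y'(b,z_2)Y'(a,z_1)w$. The right-hand term transposes to $Y'(Y'(a,z_0)b,z_2)w$.

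\textbf{Step 2: apply skew-symmetry in $V'$.} The standard vertex algebra argument, which uses only the Jacobi identity, vacuum and creation properties, gives
$$
Y'(b,z)a=e^{zD}Y'(a,-z)b,\qquad\text{where } Da=a_{-2}\1'.
$$
Transposing,
$$
(a\otimes b,\tau\circ\Yup(z)v)=(Y'(b,z)a,v)=(Y'(a,-z)b,\,e^{zD^{t}}v)=(a\otimes b,\Yup(-z)\circ e^{zD^{t}}v).
$$
It remains to identify the transpose $D^t$. We have
$$
(Da,v)=\Res_z z^{-2}(Y'(a,z)\1',v)=\Res_z z^{-2}(a,(\Id\otimes c)\circ\Yup(z)v),
$$
so $D^t=D^*$. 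Since $V'$ separates points of $V$ degreewise, this proves the first identity.

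\textbf{Step 3: the operator coalgebra case.} We show $D^*=L_V(1)$. First, pairing with $\rho$ turns $(\rho\otimes\Id)\circ\Yup(z)=\sum_k L_V(k)z^{k-2}$ into the operator $Y'(\rho,z)$ on $V'$, whose modes $L'(n)$ satisfy $L'(-k)=L_V(k)^t$. Second, I would show $L'(-1)=D$ directly from the $L_V(1)$-derivation identity (\ref{L(1D)3.6}). Apply $\Id\otimes c$ to $(L_V(1)\otimes\Id)\circ\Yup(z)=\frac{d}{dz}\Yup(z)$ and take $\Res_z z^{-2}$. Integrating by parts, and using that $(\Id\otimes c)\circ\Yup(z)v\in V[[z]]$ with constant term $v$, yields $D^*=L_V(1)$. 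This argument avoids needing all of $V'$ to be a full vertex operator algebra.

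The main obstacle I expect is Step 1. There are two points: the careful bookkeeping of the $\tau$ and the sign conventions in the delta functions when transposing (\ref{JI3.4}), and checking that the truncation condition genuinely follows from the grading. If the integration-by-parts argument in Step 3 becomes delicate, the fallback is to compare coefficients of $z^0$ in (\ref{L(1D)3.6}) after applying $\Id\otimes c$.
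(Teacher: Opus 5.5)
Your argument is correct, but it is not the paper's route: the paper gives no proof of this lemma and simply quotes it from \cite{H2}.

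\textbf{Comparison of routes.} A proof from the coalgebra axioms alone would derive the derivative properties of $D^*$ from the Jacobi identity. It would then rerun the vertex-algebra skew-symmetry argument in transposed form, and this works even for ungraded vertex coalgebras, where $V'$ need not be a vertex algebra. You instead dualize to $V'$, quote the classical identity $Y'(b,z)a=e^{zD}Y'(a,-z)b$, and recognize $D^*$ as the transpose of $D=(\cdot)_{-2}c$. This is the same duality device the paper uses in Lemmas 3.1 and 3.5. It costs you the lower-bounded grading, which the statement supplies. In exchange it is much shorter and explains where $D^*$ comes from.

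Your Step 3 is a good choice, because it avoids needing $\rho$ to give a conformal vector of $V'$. Set $F(z)=(\Id\otimes c)\circ\Yup(z)v=\sum_{n\ge0}F_nz^n$ with $F_0=v$. The derivation identity gives $L_V(1)F(z)=F'(z)$, and its $z^0$ coefficient is $L_V(1)v=F_1=D^*v$. As literally phrased, applying $\Res_z z^{-2}$ to both sides would only give $L_V(1)F_1=2F_2$. The integration by parts has to be applied to $D^*v=\Res_z z^{-2}F(z)=\Res_z z^{-1}F'(z)$ itself, which is what your fallback does.

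\textbf{Two points to tidy.}

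\emph{Truncation.} The truncation $a_kb=0$ for $k\gg0$ comes from $a_kb\in V^*_{\wt a+\wt b-k-1}$ together with $V_s=0$ for $s\ll0$. Condition (i) of Definition \ref{Def2.8} concerns $k\ll0$ and plays no role; in the graded setting it is in fact automatic.

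\emph{The vacuum.} Setting $\1':=c$ tacitly assumes $c\in V'$, i.e.\ that $c$ vanishes on $V_r$ for $r\neq0$. The statement's claim that $D^*$ has degree $-1$ uses the same fact. It follows from the axioms. Write $c_r=c|_{V_r}\in V_r^*$.
\begin{enumerate}
\item Pairing the left counit identity with $c_r\otimes b$ gives $Y'(c_r,z)=\delta_{r,0}\Id$.
\item Pairing the cocreation property with $a\otimes c_r$ gives $a_kc_r=0$ for $k\ge0$ and $a_{-1}c_r=\delta_{r,0}a$.
\end{enumerate}
Hence for $r\neq0$, by cocreation with $a=c_r$, we get $c_r=(c_r)_{-1}c_0$, and this vanishes by the first point, so $c_r=0$. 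Alternatively, work with $c|_{V_0}$ throughout, as Proposition \ref{Prop2.12} does.
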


\begin{Lemma}\label{Lm2.10}\cite{H2}
  Let $(V,\Yup (z),\rho,c)$ be a graded vertex operator coalgebra. Then we have
      \begin{eqnarray*}
      \Id\otimes e^{-zL_V(1)}\circ\Yup(z_1)\circ e^{zL_V(1)}&=&\Yup(z_1+z),\\
      e^{zL_V(1)}\otimes \Id\circ\Yup(z_1)&=&\Yup(z_1+z).
    \end{eqnarray*}
\end{Lemma}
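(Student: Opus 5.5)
The approach is to dualize the corresponding statement for vertex operator algebras rather than manipulate coalgebra identities directly. Let $V'=\oplus_s V_s^*$ be the graded dual of $V$. Since each $V_s$ is finite dimensional, Hubbard's duality makes $V'$ a vertex operator algebra. Its vertex operator is defined by
\[
(Y(f,z)g,v)=((f\otimes g),\Yup(z)v),
\]
and its Virasoro operators satisfy $L_{V'}(n)=L_V(n)^*$, where ${}^*$ denotes the transpose. In particular, $L_V(1)$ on $V$ is the transpose of $L_{V'}(-1)$ on $V'$.

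Both identities of Lemma \ref{Lm2.10} are then transposes of the standard conjugation formulas in $V'$:
\begin{eqnarray*}
e^{zL(-1)}Y(u,z_1)e^{-zL(-1)}&=&Y(u,z_1+z),\\
Y(e^{zL(-1)}u,z_1)&=&Y(u,z_1+z).
\end{eqnarray*}
I would nevertheless argue internally, so that the lemma does not depend on constructing $V'$.

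\emph{Second identity.} The $L_V(1)$-derivation identity (\ref{L(1D)3.6}) gives $(L_V(1)\otimes\Id)\circ\Yup(z_1)=\frac{d}{dz_1}\Yup(z_1)$. Iterating yields
\[
(L_V(1)^n\otimes\Id)\circ\Yup(z_1)=\frac{d^n}{dz_1^n}\Yup(z_1).
\]
Summing with weights $z^n/n!$ and applying formal Taylor's theorem gives $(e^{zL_V(1)}\otimes\Id)\circ\Yup(z_1)=\Yup(z_1+z)$, with $(z_1+z)^n$ expanded in nonnegative powers of $z$ as in the paper's convention. The expansion makes sense on each $v$: by grading and condition (i), $\Yup(z_1)v$ has only finitely many terms in each power, and $L_V(1)$ lowers weight, so $L_V(1)^n$ kills $v$ for large $n$.

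\emph{First identity.} Define
\[
F(z)=(\Id\otimes e^{-zL_V(1)})\circ\Yup(z_1)\circ e^{zL_V(1)}.
\]
Differentiating in $z$ gives
\[
\frac{d}{dz}F(z)=(\Id\otimes e^{-zL_V(1)})\circ\bigl[\Yup(z_1)\circ L_V(1)-(\Id\otimes L_V(1))\circ\Yup(z_1)\bigr]\circ e^{zL_V(1)},
\]
using that $\Id\otimes L_V(1)$ commutes with $\Id\otimes e^{-zL_V(1)}$. By the second equality in (\ref{L(1D)3.6}), the bracket equals $\frac{d}{dz_1}\Yup(z_1)$, so $\frac{d}{dz}F=\frac{d}{dz_1}F$. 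Moreover $F(0)=\Yup(z_1)$, and the unique formal solution of this transport equation with that initial value is $\Yup(z_1+z)$. Equivalently, the $n$-th $z$-derivative at $0$ is $\frac{d^n}{dz_1^n}\Yup(z_1)$, and one sums the Taylor series.

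\emph{Main obstacle.} The only delicate point is justifying these formal manipulations: the products and exponentials must be well defined on each $v$. Since $L_V(1)$ has degree $-1$ and $V_s=0$ for $s\ll0$, $e^{zL_V(1)}v$ is a polynomial in $z$. Each homogeneous component of the tensor factor is likewise killed by a high power of $L_V(1)$. Together with condition (i), this gives only finitely many contributions to each coefficient of $z^az_1^b$, which is what makes the derivative and uniqueness arguments legitimate.
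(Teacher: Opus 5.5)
Your proof is correct and follows essentially the paper's route. The paper gives no separate proof of this lemma (it cites Hubbard), and for the comodule analogue, Proposition \ref{Prop2.15}, it says the identities follow immediately from the $L_V(1)$-derivation property; your iterated-derivative/formal Taylor argument and the equation $\frac{d}{dz}F=\frac{d}{dz_1}F$ with $F(0)=\Yup(z_1)$, together with your finiteness remarks, spell out exactly that argument. One small slip, in your motivating paragraph only: by Corollary \ref{Coro2.13} the transpose relation is $L_{V'}(n)=L_V(-n)^*$, not $L_V(n)^*$ (your ``in particular'' statement is the correct one), but your internal proof never uses it.
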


\begin{Definition}\cite{H,H2}\label{Def2.10}
   Let $(V,\Yup (z),c,\rho)$ be a graded vertex operator coalgebra. Let $\mathcal {M}$ be a vector space and $\Yup_\mathcal {M}(z):\mathcal {M}\rightarrow V\otimes \mathcal {M}[[z,z^{-1}]]$ a linear map. $(\mathcal {M},\Yup_\mathcal {M}(z))$ is called a weak $V$-comodule, if

  (i) For any $m\in \mathcal {M}$, write $\Yup_\mathcal {M}(z)=\sum_{k\in\mathbb{Z}}\Delta_{\mathcal {M},k}z^{-k-1}$, we have $\Delta_{\mathcal {M},k}(m)$ is a finite sum and $\Delta_{\mathcal {M},k}(m)=0$ for $k<<0.$

  (ii) For any $m\in \mathcal {M}$, we have
    \begin{eqnarray}\label{LUM3.1}
      (c\otimes \Id )\circ\Yup_\mathcal {M} (z)m=m,
    \end{eqnarray}
called left counit identity.

  (iii) The following identity
    \begin{eqnarray}\label{JIM3.4}
      &&z_0^{-1}\delta(\frac{z_1-z_2}{z_0})(\Id\otimes \Yup_\mathcal {M} (z_2))\circ \Yup_\mathcal {M} (z_1)\nonumber\\
      &&~~~~~~~~~~-z_0^{-1}\delta(\frac{z_2-z_1}{-z_0})(\tau\otimes\Id )\circ(\Id\otimes \Yup_\mathcal {M} (z_1))\circ \Yup_\mathcal {M} (z_2)\nonumber\\
      &&=z_1^{-1}\delta(\frac{z_2+z_0}{z_1})(\Yup (z_0)\otimes \Id )\circ \Yup_\mathcal {M} (z_2)
    \end{eqnarray}
  holds on $\mathcal {M}$. This identity is also called Jacobi identity.

  (iv) Write $(\rho\otimes \Id )\circ\Yup_\mathcal {M} (z)=\sum_{k\in\mathbb{Z}}L_V(k)z^{k-2}$, then $L_V(k)\in\End(\mathcal {M})$ and the following Virasoro identity
    \begin{eqnarray}\label{VIM3.5}
     [L_V(k),L_V(j)]=(k-j)L_V(k+j)+\frac{k^3-k}{12}\delta_{k+j,0}d
    \end{eqnarray}
  holds on $\mathcal {M}$.

  (v) The following identities
    \begin{eqnarray}\label{L(1D)M3.6}
     (L_V(1)\otimes\Id )\circ \Yup_\mathcal {M} (z)=\frac{d}{dz}\Yup_\mathcal {M} (z)=\Yup_\mathcal {M}(z)\circ L_V(1)-(\Id\otimes L_V(1))\circ\Yup_\mathcal {M}(z)
    \end{eqnarray}
  holds on $\mathcal {M}$, called $L_V(1)$-derivation property.

  $(\mathcal {M},\Yup_\mathcal {M}(z))$ is called an admissible $V$-comodule, if there is a $\mathbb{N}$-grading on $\mathcal {M}$, such that $\mathcal {M}=\oplus_{t\in\mathbb{N}}M(t)$, and $\Delta_k$ is a homogeneous linear map of degree $k+1$, i.e., for any $m\in M_t$, we have $$\Delta_{\mathcal {M},k}(m)\in (V\otimes \mathcal {M})_{t+k+1}=\oplus_{s\in\mathbb{N}}V_{t+k+1-s}\otimes M(s).$$

  Let $(\mathcal {M},\Yup_\mathcal {M}(z))$ be an admissible $V$-comodule, $\mathcal {M}_1$ a subspace of $\mathcal {M}$. If $(\mathcal {M}_1,\Yup_\mathcal {M}|_{\mathcal {M}_1}(z))$ is also an admissible $V$-comodule, we say $\mathcal {M}_1$ is an admissible sub comodule.

  Let $(\mathcal {M},\Yup_\mathcal {M}(z))$ be an admissible $V$-comodule. $\mathcal {M}$ is called simple if there is no nontrivial admissible sub comodule. $\mathcal {M}$ is called cosemisimple if $\mathcal {M}$ is a direct sum of simple admissible sub comodules.

  Let $(\mathcal {M}_1,\Yup_{\mathcal {M}_1}(z))$ and $(\mathcal {M}_2,\Yup_{\mathcal {M}_2}(z))$ be two admissible $V$-comodules. A linear map $\psi:\mathcal {M}_1\rightarrow \mathcal {M}_2$ is called a $V$-comodule homomorphism if $$(\Id\otimes \psi)\circ \Yup_{\mathcal {M}_1}(z)=\Yup_{\mathcal {M}_2}(z)\circ \psi.$$

  Similarly, we have the definitions of monomorphism, epimorphism and isomorphism, etc.

  A graded vertex operator coalgebra $V$ is called corational if every admissible $V$-comodule is cosemisimple.
\end{Definition}

\begin{Remark}\label{Rmk2.11}
(i) $L_V(k)$ is a homogeneous linear map with degree $-k.$

(ii) We use $L_V(k)$ to denote linear maps both on $V$ and admissible $V$-comodule $\mathcal {M}$.

(iii) We use $L_\mathcal {V}(k)$ to denote linear maps both on $\mathcal {V}$ and admissible $\mathcal {V}$-module $\mathcal {N}$.
\end{Remark}

\begin{Remark}\label{Rmk3.22}
  Let $(V,\Yup (z),c,\rho)$ be a graded vertex operator coalgebra, and $\mathcal {M}$ is an admissible $V$-comodule and $m\in M_t$, from definition, we know $\Delta_{\mathcal {M},k}(m)\in\oplus_{s\in\mathbb{N}}V_{t+k+1-s}\otimes M_s$. Hence we can write $$\Delta_{\mathcal {M},k}(m)=\sum\sum_{s\in\mathbb{N}}m'_{t+k+1-s}\otimes m''_{s}$$with $m_i'\in V_i,m''_i\in M_i$.
\end{Remark}

\begin{Proposition}\cite{H2}\label{WCM}
Let $V$ be a graded vertex operator coalgebra. In the definition of weak $V$-comodule, Jocabi identity (\ref{JIM3.4}) is equivalent to following two conditions:

(i) (Weak cocommutativity) For $m\in \mathcal {M}$, there exists $q\in \mathbb{N}$ such that $$(z_1-z_2)^q(\Id_V\otimes \Yup_\mathcal {M} (z_2))\circ \Yup_\mathcal {M} (z_1)m=(z_1-z_2)^q(\tau\otimes\Id_\mathcal {M})\circ(\Id_V\otimes \Yup_\mathcal {M} (z_1))\circ \Yup_\mathcal {M} (z_2)m.$$

(ii) (Weak coassociativity) For $m\in \mathcal {M}$, there exists $q\in \mathbb{N}$ such that $$(z_0+z_2)^q(\Yup (z_0)\otimes \Id_\mathcal {M})\circ \Yup_\mathcal {M} (z_2)m=(z_0+z_2)^q(\Id_V\otimes \Yup_\mathcal {M} (z_2))\circ \Yup_\mathcal {M} (z_0+z_2)m.$$
\end{Proposition}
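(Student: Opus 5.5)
We prove the equivalence in both directions, modelled on the standard argument for modules over vertex algebras (see \cite{LL}), with operator compositions on $\mathcal{M}$ reversed.

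\textbf{Jacobi identity implies (i) and (ii).} Fix $m\in\mathcal{M}$. By condition (i) of Definition \ref{Def2.10}, $\Delta_{\mathcal{M},k}(m)=0$ for $k\ll0$. Write $\Delta_{\mathcal{M},k}(m)$ as a finite sum of tensors $v\otimes m_1$. Then $\Delta_{\mathcal{M},j}(m_1)=0$ for $j\ll0$, and the vectors $m_1$ range over a finite set. Hence $(\Id_V\otimes\Yup_\mathcal{M}(z_2))\circ\Yup_\mathcal{M}(z_1)m$ has only finitely many negative powers of $z_1$, and likewise of $z_2$.

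For (i), apply $\Res_{z_0}z_0^{q}$ to (\ref{JIM3.4}). The right-hand side involves $(\Yup(z_0)\otimes\Id)\circ\Yup_\mathcal{M}(z_2)m$. By condition (i) of Definition \ref{Def2.8} applied to finitely many elements of $V$, this has only finitely many negative powers of $z_0$, so it is killed for $q$ large. On the left, $\Res_{z_0}z_0^q z_0^{-1}\delta(\frac{z_1-z_2}{z_0})=(z_1-z_2)^q$, and the second term gives $(-z_2+z_1)^q$. Since both are polynomials in $z_1,z_2$ for $q\geq0$, they coincide, and we get weak cocommutativity.

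For (ii), apply $\Res_{z_1}z_1^{q}$ with $q$ chosen so that $z_1^q\Yup_\mathcal{M}(z_1)m$ has no negative powers. Then the second left term vanishes, because its $z_1$-dependence is $(\Id\otimes\Yup_\mathcal{M}(z_1))$ applied to the finitely many components of $\Yup_\mathcal{M}(z_2)m$; enlarge $q$ to cover these as well. Using the delta-function substitution, the first term becomes $(z_0+z_2)^q(\Id\otimes\Yup_\mathcal{M}(z_2))\circ\Yup_\mathcal{M}(z_0+z_2)m$, and the right-hand side becomes $(z_2+z_0)^q(\Yup(z_0)\otimes\Id)\circ\Yup_\mathcal{M}(z_2)m$. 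This is weak coassociativity.

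\textbf{(i) and (ii) imply the Jacobi identity.} Fix $m$ and set
\[
A(z_1,z_2)=(\Id\otimes\Yup_\mathcal{M}(z_2))\circ\Yup_\mathcal{M}(z_1)m,\qquad
B(z_1,z_2)=(\tau\otimes\Id)\circ(\Id\otimes\Yup_\mathcal{M}(z_1))\circ\Yup_\mathcal{M}(z_2)m,
\]
\[
C(z_0,z_2)=(\Yup(z_0)\otimes\Id)\circ\Yup_\mathcal{M}(z_2)m.
\]
By (i), $(z_1-z_2)^qA=(z_1-z_2)^qB$. Because $A$ is truncated below in $z_1$ and $B$ is truncated below in $z_2$, this common element $F$ lies in $V\otimes V\otimes\mathcal{M}((z_1,z_2))$ in the appropriate sense. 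By (ii), $(z_0+z_2)^{q'}C$ equals $F$ evaluated at $z_1=z_0+z_2$, after multiplying by a suitable power.

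Now apply the standard delta-function lemma. Multiply the desired identity by $z_0^{q}z_1^{q'}$ (using $z_1=z_2+z_0$ inside the deltas). Both sides then reduce to $z_2^{-1}\delta(\frac{z_1-z_0}{z_2})$ times the same element obtained from $F$. Finally, cancel these powers, which is legitimate because each side lies in a space on which multiplication by them is injective (the lower-truncation conditions established above). This is exactly the argument of \cite[Thm.~3.5.1]{LL}, transported to $V\otimes V\otimes\mathcal{M}$.

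The main obstacle is the bookkeeping in this second direction. We must verify that the relevant formal series sit in spaces where the cancellation of $z_0^q$ and $(z_0+z_2)^{q'}$ is valid. The key extra point compared with the module case is that the truncation conditions now come from the coaction conditions of Definitions \ref{Def2.8} and \ref{Def2.10}. Since all sums $\Delta_k(v)$ are finite, each component involves only finitely many tensors, so the truncation arguments carry over termwise.
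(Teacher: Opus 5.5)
\textbf{There is a genuine gap.} The paper gives no proof of this proposition; it is quoted from \cite{H2}. So your argument has to stand on its own, and it does not, because every truncation claim in it points the wrong way.

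In Definitions \ref{Def2.8} and \ref{Def2.10} the condition is $\Delta_k(v)=0$, resp.\ $\Delta_{\mathcal M,k}(m)=0$, for $k\ll 0$. These coefficients multiply $z^{-k-1}$, so it is the \emph{positive} powers of $z$ that are bounded: $\Yup(z)v\in V\otimes V((z^{-1}))$ and $\Yup_{\mathcal M}(z)m\in V\otimes\mathcal M((z^{-1}))$. By the grading, the $V_a\otimes V_b$-component of $\Yup(z)v$ is a multiple of $z^{\wt v-a-b}$, so negative powers are typically unbounded. The vectors $m_1$ also do not range over a finite set, since infinitely many $k$ contribute. Consequently:
\begin{enumerate}
\item in (i), $\Res_{z_0}z_0^q$ does not kill $z_1^{-1}\delta(\frac{z_2+z_0}{z_1})(\Yup(z_0)\otimes\Id)\circ\Yup_{\mathcal M}(z_2)m$ for any $q$;
\item in (ii), there is in general no $q$ with $z_1^q\Yup_{\mathcal M}(z_1)m\in V\otimes\mathcal M[[z_1]]$;
\item in the converse, $A$, $B$, $C$ are truncated from above, not below, so your description of $F$ and your justification of the final cancellation do not apply.
\end{enumerate}

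This is not a bookkeeping slip that can be repaired. With $q$ depending only on $m$, conditions (i) and (ii) fail in general, so the forward implication cannot be proved as printed. Take $\mathcal V=M(1)$, the rank one Heisenberg vertex operator algebra with generator $h$. Let $V=\mathcal V'$ (Proposition \ref{Prop2.12}(i)), let $\mathcal M=V$ (a comodule by (\ref{JI3.4})), let $m=c=\mathbf{1}^*$, and let $f=h(-k)\mathbf{1}\in V'$. Then
\[
(f\otimes f\otimes \mathbf{1},A)=(Y(f,z_1)Y(f,z_2)\mathbf{1},c)=C_k\,(z_1-z_2)^{-2k},\qquad C_k=(-1)^{k-1}\frac{(2k-1)!}{((k-1)!)^2}\neq 0,
\]
expanded in nonnegative powers of $z_2$. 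Meanwhile $(f\otimes f\otimes\mathbf{1},B)$ is the same expression expanded in nonnegative powers of $z_1$. Hence $(z_1-z_2)^q(A-B)$, paired with $f\otimes f\otimes\mathbf{1}$, is a nonzero multiple of $\partial_{z_2}^{2k-q-1}z_1^{-1}\delta(z_2/z_1)$ whenever $q<2k$. Since $k$ is arbitrary, no $q$ works, even though the Jacobi identity holds. Pairing (ii) with $f\otimes\mathbf{1}\otimes f$ gives $C_k(z_0+z_2)^{-2k}$ in the two opposite expansions, so (ii) fails in the same way.

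The underlying reason is visible in Li's weak commutativity and associativity for the $V'$-module $\mathcal M'$. There the exponent depends on $f,g\in V'$ (resp.\ on $f$ and $\mu\in\mathcal M'$), that is, on the test functionals, not on $m$. A correct statement should be made after pairing with $f\otimes g\otimes\mu$, or via Hubbard's rationality formulation, and proved by dualizing to $\mathcal M'$. A residue argument with a uniform $q$ cannot work.
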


\begin{Proposition}\label{Prop2.15}
Let $V$ be a graded vertex operator coalgebra, and $(\mathcal {M},\Yup_\mathcal {M}(z))$ an admissible $V$-comodule. Then, we have $$e^{z_0L_V(1)}\otimes\Id\circ\Yup_\mathcal {M}(z)=\Yup_\mathcal {M}(z+z_0)=\Id\otimes e^{-z_0L_V(1)}\circ\Yup_\mathcal {M}(z)\circ e^{z_0L_V(1)}.$$
\end{Proposition}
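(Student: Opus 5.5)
The plan is to derive both equalities from the $L_V(1)$-derivation property (\ref{L(1D)M3.6}) together with the formal Taylor theorem $f(z+z_0)=e^{z_0\frac{d}{dz}}f(z)$, valid for formal series in $z$. First I will check that every expression is well defined. By Remark \ref{Rmk2.11}(i), $L_V(1)$ is homogeneous of degree $-1$. On an admissible comodule $\mathcal{M}=\oplus_{t\in\mathbb{N}}M(t)$ the grading is bounded below, so for homogeneous $m$ the series $e^{z_0L_V(1)}m$ is a finite sum, i.e.\ a polynomial in $z_0$. Likewise, on $V$ we have $V_s=0$ for $s\ll0$, so $e^{z_0L_V(1)}$ acts on each homogeneous component $V_{t+k+1-s}$ appearing in $\Delta_{\mathcal{M},k}(m)$ (Remark \ref{Rmk3.22}) as a polynomial in $z_0$. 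Hence all three expressions are well-defined formal series in $z,z_0$ with coefficients in $V\otimes\mathcal{M}$. Here $\Yup_\mathcal{M}(z+z_0)$ is expanded in nonnegative powers of $z_0$, following the convention for $(z_1+z_2)^n$ fixed in the introduction.

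For the first equality, note that $L_V(1)\otimes\Id$ does not involve $z$, so it commutes with $\frac{d}{dz}$. Iterating the left half of (\ref{L(1D)M3.6}) then gives $\big(\frac{d}{dz}\big)^n\Yup_\mathcal{M}(z)=(L_V(1)^n\otimes\Id)\circ\Yup_\mathcal{M}(z)$ for all $n\ge0$. Summing $z_0^n/n!$ times this over $n\ge0$ yields
\[
(e^{z_0L_V(1)}\otimes\Id)\circ\Yup_\mathcal{M}(z)=e^{z_0\frac{d}{dz}}\Yup_\mathcal{M}(z)=\Yup_\mathcal{M}(z+z_0),
\]
where the last step is the formal Taylor theorem applied coefficientwise to $\sum_k\Delta_{\mathcal{M},k}z^{-k-1}$.

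For the second equality, set
\[
F(z,z_0)=(\Id\otimes e^{-z_0L_V(1)})\circ\Yup_\mathcal{M}(z)\circ e^{z_0L_V(1)}.
\]
Differentiating in $z_0$ gives
\[
\frac{\partial}{\partial z_0}F=(\Id\otimes e^{-z_0L_V(1)})\circ\big[\Yup_\mathcal{M}(z)\circ L_V(1)-(\Id\otimes L_V(1))\circ\Yup_\mathcal{M}(z)\big]\circ e^{z_0L_V(1)}.
\]
By the right half of (\ref{L(1D)M3.6}) the bracket equals $\frac{d}{dz}\Yup_\mathcal{M}(z)$, so $\frac{\partial}{\partial z_0}F=\frac{\partial}{\partial z}F$. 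Since also $F(z,0)=\Yup_\mathcal{M}(z)$, induction on $n$ gives $\frac{\partial^n}{\partial z_0^n}F\big|_{z_0=0}=\frac{d^n}{dz^n}\Yup_\mathcal{M}(z)$. Because $F$ is a power series in $z_0$, comparing the coefficients of $z_0^n$ yields $F=e^{z_0\frac{d}{dz}}\Yup_\mathcal{M}(z)=\Yup_\mathcal{M}(z+z_0)$.

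This argument exactly parallels Lemma \ref{Lm2.10} for $V$ itself. The only step that needs real care is justifying the termwise manipulations (differentiation in $z_0$, and extracting the coefficients of $z_0^n$ at $z_0=0$) for series that are doubly infinite in $z$. I expect this to be handled by working one coefficient of $z^{-k-1}$ at a time and on a homogeneous vector $m$. In that setting the finiteness from the grading shows that the coefficient of each $z^{-k-1}$ in $F$ is a polynomial in $z_0$, so the identification is legitimate.
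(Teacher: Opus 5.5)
Your proposal is correct and takes the same approach as the paper, whose entire proof is that the identities follow immediately from the $L_V(1)$-derivation property. You fill in the details the paper leaves implicit: iterating the left half of the derivation identity and applying the formal Taylor theorem for the first equality; the argument $\partial_{z_0}F=\partial_z F$ with $F(z,0)=\Yup_\mathcal{M}(z)$ for the second; and the finiteness checks coming from the gradings on $V$ and $\mathcal{M}$.
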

\proof This follows from $L_V(1)$-derivation property immediately.                             $\hfill\Box$

\begin{Proposition}\cite{H,H1,H2}\label{Prop2.12}
Let $(V,\Yup (z),c,\rho)$ be a graded vertex operator coalgebra and $(\mathcal {V},Y,\textbf{1},\omega)$ a vertex operator algebra. Let $(\mathcal {M},\Yup_\mathcal {M}(z))$ be an admissible $V$-comodule and $(\mathcal {N},Y_\mathcal {N})$ an admissible $\mathcal {V}$-module. Suppose $V_0\nsubseteq\ker c,V_2\nsubseteq\ker\rho.$ Then we have

(i) $(\mathcal {V}'=\oplus_{s\in\mathbb{Z}}\mathcal {V}_s^*,\Yup_{\mathcal {V}'}(z),c_{\mathcal {V}'},\rho_{\mathcal {V}'})$ is a graded vertex operator coalgebra with
    \begin{eqnarray*}
     &&c_{\mathcal {V}'}=\textbf{1}^*,\rho_{\mathcal {V}'}=\omega^*,\\
     &&(\Yup_{\mathcal {V}'}(z)f,u\otimes v)=(f,Y(u,z)v),
    \end{eqnarray*}
where $f\in \mathcal {V}',u,v\in\mathcal {V}$, $\textbf{1}^*,\omega^*$ are the dual elements of $\textbf{1},\omega.$

(ii) $(V'=\oplus_{s\in\mathbb{Z}}V_s',Y_{V'}(\cdot,z),\textbf{1}_{V'},\omega_{V'})$ is a vertex operator algebra with
    \begin{eqnarray*}
     &&\textbf{1}_{V'}=c|_{V_0},\omega_{V'}=\rho|_{V_2},\\
     &&(Y(f,z)g,v)=(f\otimes g,\Yup(z)v),
    \end{eqnarray*}
where $f,g\in V',v\in V.$

(iii) $\mathcal {N}'=\oplus_{s\in\mathbb{N}}N(s)^*$ is an admissible $\mathcal {V}'$-comodule with $$(\Yup_{\mathcal {N}'}(z)n^*,v\otimes n)=(n^*,Y_\mathcal {N}(v,z)n),$$where $n^*\in \mathcal {N}',v\in\mathcal {V},n\in \mathcal {N}.$

(iv) $\mathcal {M}'=\oplus_{s\in\mathbb{N}}M(s)^*$ is an admissible $V'$-module with $$(Y_{\mathcal {M}'}(f,z)m^*,m)=(f\otimes m^*,\Yup_\mathcal {M}(z)m),$$where $f\in V',m^*\in \mathcal {M}',m\in \mathcal {M}.$
\end{Proposition}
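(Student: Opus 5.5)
The plan is to verify each of the four statements directly by dualizing the defining axioms through the natural pairing $(\cdot,\cdot)$. Throughout, the graded pieces $\mathcal {V}_s$, $V_s$, $N(s)$, $M(s)$ are finite dimensional, or at least each coefficient lands in a single finite graded piece. This ensures the formulas really define maps into the required spaces.

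\textbf{Well-definedness.} For (i), a homogeneous $f\in\mathcal {V}_s^*$ and a coefficient index $k$ are given. I will use that $u_kv$ has weight $\wt u+\wt v-k-1$. Then $f(u_kv)\neq 0$ forces $\wt u+\wt v=s+k+1$. Hence $\Delta_k(f)$ lives in $\bigoplus_{p}\mathcal {V}_{s+k+1-p}^*\otimes\mathcal {V}_p^*$. This sum is finite because $\mathcal {V}_p=0$ for $p\ll0$ and the total weight is fixed. The same bound shows $\Delta_k(f)=0$ for $k\ll0$. Thus $\Yup_{\mathcal {V}'}(z)$ is a genuine map into $\mathcal {V}'\otimes\mathcal {V}'[[z,z^{-1}]]$ with $\Delta_k$ of degree $k+1$. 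Analogous degree bookkeeping handles (ii)–(iv). In (iii) and (iv) it uses the $\mathbb{N}$-grading of the module or comodule and the finiteness of each $N(s)$, $M(s)$. In (ii), lower truncation $u_tv=0$ for $t\gg0$ corresponds to condition (i) in Definition \ref{Def2.8}, read in reverse.

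\textbf{Transport of axioms.} Each axiom is checked by pairing both sides with elements of the dual space.
\begin{itemize}[label={}]
\item The counit identities (\ref{LU3.1}), (\ref{CI3.2}) and (\ref{CI3.3}) dualize $Y(\textbf{1},z)=\Id$ and the creation property $\lim_{z\to0}Y(v,z)\textbf{1}=v$. The point is that pairing with $\textbf{1}^*\otimes(\cdot)$ picks out exactly $Y(\textbf{1},z)$, provided the normalisation $V_0\nsubseteq\ker c$ identifies $c$ with $\textbf{1}^*$.
\item The Jacobi identity (\ref{JI3.4}) is obtained by pairing both sides with $u\otimes v\otimes w$. The three terms become $(f,Y(u,z_1)Y(v,z_2)w)$, $(f,Y(v,z_2)Y(u,z_1)w)$ and $(f,Y(Y(u,z_0)v,z_2)w)$, with the same delta functions. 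For the middle term, the flip $\tau\otimes\Id$ accounts for the order of the two insertions.
\item The Virasoro operators are defined via $\rho=\omega^*$. They satisfy $(L_{V}(k)f,v)=(f,L_{\mathcal V}(-k)v)$ up to a sign convention fixed by the $z^{k-2}$ indexing, so the Virasoro relation (\ref{VI3.5}) transfers with the same central charge.
\item The $L(1)$-derivation identity (\ref{L(1D)3.6}) is the dual of $Y(L(-1)u,z)=\frac{d}{dz}Y(u,z)$ together with $[L(-1),Y(u,z)]=\frac{d}{dz}Y(u,z)$.
\end{itemize}

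Parts (iii) and (iv) are the same computation with a module or comodule in the last tensor slot, using the module Jacobi identity of Definition \ref{Def2.2} and the comodule Jacobi identity (\ref{JIM3.4}) respectively. The admissible grading follows from the degree bookkeeping above. For the vertex operator algebra in (ii), the extra axioms are handled as follows: vacuum and creation come from the counit identities, and $L(0)$-grading is the eigenspace condition. For $L(-1)$-derivation, I take the dual of $L_V(1)$ and use (\ref{L(1D)3.6}).

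\textbf{Main obstacle.} The hardest step is the bookkeeping of which Virasoro mode dualizes to which, so that $L_{V'}(-1)$ is the transpose of $L_V(1)$ and the sign and indexing conventions in $(\rho\otimes\Id)\circ\Yup(z)=\sum L_V(k)z^{k-2}$ match $Y(\omega,z)=\sum L(n)z^{-n-2}$. A second delicate point is the validity of interchanging the pairing with formal series in the Jacobi identity. I would justify this by noting that, for fixed homogeneous $f$ and fixed powers of $z_0,z_1,z_2$, only finitely many graded components contribute. Once these conventions are fixed, all four parts are routine term-by-term dualizations.
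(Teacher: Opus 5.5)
Your termwise dualization through the natural pairing is correct, with degree bookkeeping used to make the dual structure maps land in the graded duals. This is the standard argument behind the results of Hubbard that the paper cites for this proposition; the paper gives no proof of its own. A few justifications need repair, though none of them changes the approach.

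\textbf{Truncation in (ii).} The property $f_kg=0$ for $k\gg0$ is not Definition~\ref{Def2.8}(i) read in reverse. That condition, for fixed $v$, only gives $(f_kg,v)=0$ for $k\ll0$. Truncation instead follows from the grading: $f_kg\in V^*_{\wt f+\wt g-k-1}$, and $V_s=0$ for $s\ll0$.

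\textbf{The hypotheses on $c$ and $\rho$.} The assumptions $V_0\nsubseteq\ker c$ and $V_2\nsubseteq\ker\rho$ do not identify $c$ and $\rho$ with their restrictions. In part (i) no hypothesis is involved at all, since $c_{\mathcal V'}$ is defined as evaluation at $\textbf{1}$. What does the job in (ii) is the same degree bookkeeping:
\begin{itemize}
\item Splitting the left counit identity into graded components gives $(c|_{V_t}\otimes\Id)\circ\Yup(z)=0$ for $t\neq0$.
\item The Virasoro relation $[L_V(0),L_V(k)]=-kL_V(k)$ gives $L_V(k)V_s\subset V_{s-k}$. Hence $(\rho|_{V_t}\otimes\Id)\circ\Yup(z)=0$ for $t\neq2$.
\item Combining these with the constant term $(\Id\otimes c)\Delta_{-1}(v)=v$ of the creation identity shows that in fact $c\in V_0^*$ and $\rho\in V_2^*$.
\end{itemize}
This is exactly what makes $c|_{V_0}$ and $\rho|_{V_2}$ the vacuum and conformal vector of $V'$. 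It is also what gives the vacuum axiom for $\mathcal M'$ in (iv).

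\textbf{Finite-dimensionality in (iii) and (iv).} Neither part needs $\dim N(s)<\infty$ or $\dim M(s)<\infty$, and the statement does not assume it. In (iv), each $f_km^*$ already lies in a single $M(\wt f+s-k-1)^*$. In (iii), $\dim\mathcal V_a<\infty$ alone gives $(\mathcal V_a\otimes N(t))^*=\mathcal V_a^*\otimes N(t)^*$.
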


\begin{Corollary}\label{Coro2.13}
 Let $\mathcal {M}$ be an admissible $V$-comodule. Then $\mathcal {M}$ and $\mathcal {M}'$ are modules of Virasoro algebra $\oplus \mathbb{C}L_V(k)\oplus\mathbb{C}d$, Furthermore, for $m\in\mathcal {M},m^*\in\mathcal {M}'$, we have $$(L_V(k)m,m^*)=(m,L_{V}(-k)m^*),$$for all $k\in\mathbb{Z}$.
\end{Corollary}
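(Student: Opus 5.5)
The claim to prove is Corollary~\ref{Coro2.13}: $\mathcal{M}$ and $\mathcal{M}'$ are Virasoro modules, and $(L_V(k)m,m^*)=(m,L_V(-k)m^*)$ for all $k\in\mathbb{Z}$. I will deduce it from Proposition~\ref{Prop2.12}(iv) together with the axioms of Definition~\ref{Def2.10}.

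The Virasoro module structure on $\mathcal{M}$ is immediate. Condition (iv) of Definition~\ref{Def2.10} gives the Virasoro identity (\ref{VIM3.5}) for the operators $L_V(k)$ defined by $(\rho\otimes\Id)\circ\Yup_\mathcal{M}(z)=\sum_k L_V(k)z^{k-2}$. Letting the central element act as the scalar $d$ makes $\mathcal{M}$ a module over $\oplus\mathbb{C}L_V(k)\oplus\mathbb{C}d$.

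For $\mathcal{M}'$, Proposition~\ref{Prop2.12}(iv) makes $\mathcal{M}'=\oplus_s M(s)^*$ an admissible $V'$-module with conformal vector $\omega_{V'}=\rho|_{V_2}$. Hence $\mathcal{M}'$ carries the Virasoro action $Y_{\mathcal{M}'}(\omega_{V'},z)=\sum_n L_{V'}(n)z^{-n-2}$. This is a standard fact for modules over a vertex operator algebra: it follows from the Jacobi identity in Definition~\ref{Def2.2} applied to $\omega_{V'}$. The remaining work is to identify these operators with the $L_V(-k)$ acting on $\mathcal{M}'$ and to verify the pairing formula.

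Take $m^*\in\mathcal{M}'$ and $m\in\mathcal{M}$. By Proposition~\ref{Prop2.12}(iv),
\[
(Y_{\mathcal{M}'}(\rho,z)m^*,m)=(\rho\otimes m^*,\Yup_\mathcal{M}(z)m)=(m^*,(\rho\otimes\Id)\circ\Yup_\mathcal{M}(z)m)=\sum_k(m^*,L_V(k)m)z^{k-2}.
\]
The second equality holds because, for fixed $m$, only the $V_2$-components pair nontrivially with $\rho$. Comparing with $\sum_n(L_{V'}(n)m^*,m)z^{-n-2}$ and matching powers of $z$ ($k-2=-n-2$, so $n=-k$) gives
\[
(L_{V'}(-k)m^*,m)=(m^*,L_V(k)m).
\]
So we define $L_V(-k)$ on $\mathcal{M}'$ to be $L_{V'}(-k)$, the transpose of $L_V(k)$. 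This transpose is well defined on the graded dual: $L_V(k)$ has degree $-k$ by Remark~\ref{Rmk2.11}, so it maps $M(s)^*$ into $M(s+k)^*$.

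The one step needing care is the Virasoro relation on $\mathcal{M}'$. I could get it from the $V'$-module structure, but it is more direct to transpose (\ref{VIM3.5}). Transposition reverses the order of composition, so the transposes $T_k:=L_V(k)^t$ satisfy $[T_k,T_j]=-(k-j)T_{k+j}-\frac{k^3-k}{12}\delta_{k+j,0}d$. Setting $L'(n):=T_{-n}$ turns this into $[L'(n),L'(p)]=(n-p)L'(n+p)+\frac{n^3-n}{12}\delta_{n+p,0}d$, using that $k^3-k$ is odd in $k$. This is the correct Virasoro bracket with the same central charge $d$. The main obstacle is therefore only bookkeeping: keeping the index convention $z^{k-2}$ for comodules straight against $z^{-n-2}$ for modules, so that the sign flip $k\mapsto -k$ lands exactly as stated.
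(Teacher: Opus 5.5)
Your proposal is correct and follows the route the paper intends: the paper gives no separate argument, and the corollary is read off from Proposition~\ref{Prop2.12}(iv) by taking $f=\rho=\omega_{V'}$ and comparing coefficients of $z$, which is exactly your computation. Your direct transposition check of the Virasoro bracket on $\mathcal{M}'$ is a valid, self-contained alternative to invoking the $V'$-module structure.
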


\section{Contragredient comodules}
In this section, we give the construction of $V$-comodule structure on the graded dual $\mathcal {M}'$ for a graded vertex operator coalgebra and an admissible $V$-comodule $\mathcal {M}$. From now on, we always assume $\rho\in V'$ is homogeneous, i.e., $\rho\in V_2^*.$

First, for a linear map $\phi:V\rightarrow V$, let $z^\phi:V\rightarrow V\{z\}$ be a linear map which is defined as $$z^\phi(v)=z^{\lambda}(v),$$ where $v$ is an eigenvector of $\phi$ with eigenvalue $\lambda$, and $V\{z\}=\{\sum_{k\in\mathbb{C}}v_kz^k|v_k\in V\}.$

\begin{Lemma}\label{Lm3.1}
Let $(V, \Yup(z), \rho,c)$ be a graded vertex operator coalgebra. Then we have following identities:

(i) $L_V(0)\rho=2\rho,~L_V(2)\rho=\frac{1}{2}dc,~L_V(n)\rho=0,$ where $n>0,n\neq2.$

(ii) $L_V(n)c=0,$ where $n\geq0.$
\end{Lemma}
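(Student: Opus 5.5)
The plan is to read $\rho$ and $c$ as elements of the graded dual $V'$ and let the operators $L_V(n)$ act on them by the transpose action. That is, $(L_V(n)f,v)=(f,L_V(-n)v)$, exactly as in Corollary \ref{Coro2.13} for $\mathcal{M}=V$ regarded as a comodule over itself. By Proposition \ref{Prop2.12}(ii), $V'$ is a vertex operator algebra with vacuum $\1_{V'}=c|_{V_0}$ and conformal vector $\omega_{V'}=\rho|_{V_2}$. The vertex operator is determined by $(Y(f,z)g,v)=(f\otimes g,\Yup(z)v)$. The first step is to check that the transpose action of $L_V(n)$ coincides with the Virasoro operator $L_{V'}(n)$ of $V'$. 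Expanding $(\rho\otimes \Id)\circ\Yup(z)=\sum_k L_V(k)z^{k-2}$ and pairing with $g\in V'$ gives
\[
(g,L_V(k)v)z^{k-2}\ \text{summed over }k \;=\; (\rho\otimes g,\Yup(z)v)=(Y(\omega_{V'},z)g,v)=\sum_n (L_{V'}(n)g,v)z^{-n-2}.
\]
Hence $(L_{V'}(-k)g,v)=(g,L_V(k)v)$. This is precisely the duality of Corollary \ref{Coro2.13}, so $L_V(n)$ acting on $V'$ is $L_{V'}(n)$. Here we use that $\rho$ is homogeneous of weight $2$, as assumed at the start of this section.

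With this identification, both parts reduce to standard vertex operator algebra facts for $V'$. For (ii), $L_{V'}(n)\1_{V'}=0$ for $n\ge -1$ follows from the creation property: $Y(\omega_{V'},z)\1_{V'}\in V'[[z]]$, so $\omega_{V'}$ sends $\1$ to zero in nonnegative modes. In particular this holds for $n\ge 0$. Alternatively, one can argue directly. Pairing gives $(L_V(n)c,v)=(c,L_V(-n)v)$. Since $L_V(-n)$ raises degree by $n$ and $c$ is supported on $V_0$ (because $(c\otimes \Id)\circ\Yup(z)v=v$ forces $c$ to be homogeneous of degree $0$), this vanishes for $n>0$ unless $v$ has negative weight. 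The case $n=0$ follows from weight $0$. To cover the remaining negative-weight cases, I will use the vertex operator algebra argument.

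For (i), we write $\omega_{V'}=\rho$ and use the standard identities $L(0)\omega=2\omega$, $L(1)\omega=0$, $L(2)\omega=\frac{d}{2}\1$, and $L(n)\omega=0$ for $n\ge 3$. These follow from $\omega=L(-2)\1$ and the Virasoro relations $[L(n),L(-2)]=(n+2)L(n-2)+\frac{n^3-n}{12}\delta_{n,2}d$, applied to $\1$ together with $L(m)\1=0$ for $m\ge -1$. For $n=1$ this gives $3L(-1)\1=0$. For $n=2$ it gives $4L(0)\1+\frac{d}{2}\1=\frac d2\1$. For $n\ge3$ it gives $(n+2)L(n-2)\1=0$. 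The central charge of $V'$ is the rank $d$, because the Virasoro identity (\ref{VI3.5}) dualizes with the same constant. Note that $\1_{V'}=c|_{V_0}=c$, since $c$ is supported on $V_0$.

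The main obstacle is the bookkeeping in the identification $L_V(n)\leftrightarrow L_{V'}(n)$: the sign and index conventions must match the $z^{k-2}$ versus $z^{-n-2}$ expansions. One also needs the identity $\omega_{V'}=L_{V'}(-2)\1_{V'}$, which follows from $Y(\omega,z)\1|_{z=0}=\omega$ in $V'$. I will settle the conventions first, by checking the case $L_V(0)$, which must act as the grading on both sides. After that, everything is transported verbatim from the vertex operator algebra $V'$.
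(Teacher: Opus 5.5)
Your proposal is correct and takes the same route as the paper. The paper invokes Proposition~\ref{Prop2.12} to regard $V'$ as a vertex operator algebra with vacuum $c$ and conformal vector $\rho$, and reads the identities off its Jacobi identity; your check that the transposed $L_V(n)$ are the modes $L_{V'}(n)$ of $Y(\rho,z)$ with central charge $d$, and your derivation of $L(n)\omega$ from $\omega=L(-2)\1$ and $L(m)\1=0$ for $m\ge -1$, fill in what the paper leaves implicit.

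One small point, which the paper also passes over: the left counit identity alone only gives $(c_t\otimes\Id)\circ\Yup(z)=0$ for the components $c_t\in V_t^*$ with $t\neq 0$. To get $c=c|_{V_0}$, which you need in both parts to pass from $\1_{V'}$ to $c$, add one step, for example the creation property in $V'$: $c_t=\lim_{z\to 0}Y(c_t,z)\1_{V'}=0$.
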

\proof By Proposition \ref{Prop2.12}, we know $V'$ is a vertex operator algebra, the vacuum vector is $c$ and Virasoro vector is $\rho.$ Now above identities follow from Jacobi identity of vertex operator algebra $V'.$                        $\hfill\Box$

\begin{Lemma}\label{Lm3.2}
Let $(V, \Yup(z), \rho,c)$ be a graded vertex operator coalgebra. Then, we have the following identity
 \begin{eqnarray*}
  e^{zL_V(-1)}L_V(1)=L_V(1)e^{zL_V(-1)}-2zL_V(0)e^{zL_V(-1)}+z^2e^{zL_V(-1)}.
 \end{eqnarray*}
\end{Lemma}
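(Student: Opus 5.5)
The plan is to prove the identity as an equality of formal power series in $z$ with coefficients in $\End(V)$, using only the Virasoro identity (\ref{VI3.5}) of Definition \ref{Def2.8}. Since $L_V(-1)$ is homogeneous of degree $1$ by Remark \ref{Rmk2.11}, $e^{zL_V(-1)}=\sum_{n\geq0}\frac{z^n}{n!}L_V(-1)^n$ is a well-defined element of $(\End V)[[z]]$. Both sides may therefore be compared coefficient by coefficient in $z$.

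First I will record the two commutators I need from (\ref{VI3.5}); the central term vanishes in both because $k^3-k=0$ for $k=\pm1$:
\begin{eqnarray*}
[L_V(-1),L_V(1)]=-2L_V(0),\qquad [L_V(-1),L_V(0)]=-L_V(-1).
\end{eqnarray*}
By induction on $n$ I then obtain, for $n\geq1$,
\begin{eqnarray*}
L_V(-1)^nL_V(1)=L_V(1)L_V(-1)^n-2nL_V(0)L_V(-1)^{n-1}+n(n-1)L_V(-1)^{n-1}.
\end{eqnarray*}
The induction step is as follows. Multiply the identity for $n$ on the left by $L_V(-1)$. In the first resulting term, move $L_V(-1)$ past $L_V(1)$; this produces $-2L_V(0)L_V(-1)^n$. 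In the second, move it past $L_V(0)$; this produces an extra $+2nL_V(-1)^{n}$. Together these give the identity for $n+1$.

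Dividing by $n!$ and summing with weights $z^n$ turns the three terms into the three terms of the lemma. The first is $L_V(1)e^{zL_V(-1)}$. The second is $-2zL_V(0)e^{zL_V(-1)}$. The third is $\sum_{n\geq2}\frac{z^n}{(n-2)!}L_V(-1)^{n-1}=z^2L_V(-1)e^{zL_V(-1)}$.

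Equivalently, and as a cross-check, I can use the adjoint formula $e^{zA}Be^{-zA}=\sum_{n\ge 0}\frac{z^n}{n!}(\mathrm{ad}_A)^nB$ with $A=L_V(-1)$ and $B=L_V(1)$. The series terminates at $n=2$, since $(\mathrm{ad}_A)^2B=2L_V(-1)$ commutes with $A$, so that $(\mathrm{ad}_A)^3B=0$. This gives
\[
e^{zL_V(-1)}L_V(1)e^{-zL_V(-1)}=L_V(1)-2zL_V(0)+z^2L_V(-1).
\]
Multiplying on the right by $e^{zL_V(-1)}$ yields the claim.

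The only delicate point is the last term. The computation produces $z^2L_V(-1)e^{zL_V(-1)}$, not $z^2e^{zL_V(-1)}$. A degree count confirms this: every other term in the identity shifts degree by $n-1$ at order $z^n$, whereas $z^2e^{zL_V(-1)}$ does not. So I would prove the statement with the factor $L_V(-1)$ in the last term, and I read the displayed formula as containing this (evidently intended) factor. There is no further obstacle; the proof is purely a Virasoro-algebra computation and does not use the Jacobi identity.
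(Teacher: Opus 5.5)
Your proof is correct and follows essentially the same route as the paper. The paper also expands $e^{zL_V(-1)}$ and writes $[L_V(-1)^i,L_V(1)]$ as a sum of terms $-2L_V(-1)^kL_V(0)L_V(-1)^{i-1-k}$, then moves $L_V(0)$ to the left to reach your coefficient identity $-2iL_V(0)L_V(-1)^{i-1}+i(i-1)L_V(-1)^{i-1}$, which you obtain by induction instead. Your reading of the last term is also right: the paper's own computation ends with $z^2L_V(-1)e^{zL_V(-1)}$, and that is the form used in the proof of Theorem \ref{Thm3.6}, so the displayed $z^2e^{zL_V(-1)}$ is a typo.
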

\proof Using relations between $L_V(-1),L_V(0),L_V(1)$, we have
 \begin{eqnarray*}
  &&e^{zL_V(-1)}L_V(1)=L_V(1)e^{zL_V(-1)}+\sum_i\frac{z^i}{i!}[L_V(-1)^i,L_V(1)]\\
  &=&L_V(1)e^{zL_V(-1)}+\sum_i\frac{z^i}{i!}(-2L_V(0)L_V(-1)^{i-1}-2L_V(-1)L_V(0)L_V(-1)^{i-2}\\
  &&~~~~~~~~~~-\cdots-2L_V(-1)^{i-1}L_V(0))\\
  &=&L_V(1)e^{zL_V(-1)}+\sum_i\frac{z^i}{i!}(-2iL_V(0)L_V(-1)^{i-1}+i(i-1)L_V(-1)^{i-1})\\
  &=&L_V(1)e^{zL_V(-1)}-2zL_V(0)e^{zL_V(-1)}+z^2L_V(-1)e^{zL_V(-1)}.
 \end{eqnarray*}
This completes the proof.                                     $\hfill\Box$

\begin{Lemma}\label{Lm3.3}
Let $(V, \Yup(z), \rho,c)$ be a graded vertex operator coalgebra. Then, we have the following identity
 \begin{eqnarray*}
 \Yup(z_1)\circ z^{L_V(0)}=z^{L_V(0)\otimes L_V(0)}\circ\Yup(zz_1),
 \end{eqnarray*}
 or, equivalently
  \begin{eqnarray*}
 \Id\otimes z^{-L_V(0)}\circ\Yup(z_1)\circ z^{L_V(0)}=z^{L_V(0)}\otimes\Id\circ\Yup(zz_1).
 \end{eqnarray*}
\end{Lemma}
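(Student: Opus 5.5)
This is a grading computation: we check the identity componentwise on homogeneous vectors, using that $\Delta_k$ is homogeneous of degree $k+1$. The formal variable $z$ plays the role of the scaling parameter.

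\emph{Reduction to homogeneous vectors.} By linearity it suffices to take $v\in V_s$ homogeneous. Expand
\[
\Yup(z_1)v=\sum_{k\in\mathbb{Z}}\Delta_k(v)z_1^{-k-1},
\qquad \Delta_k(v)=\sum_{p+q=s+k+1} v'_{p}\otimes v''_{q},
\]
with $v'_p\in V_p$ and $v''_q\in V_q$, as in Remark \ref{Rmk3.22}. Here $z^{L_V(0)}\otimes z^{L_V(0)}$ is understood as $z^{L_V(0)\otimes L_V(0)}$, meaning $z^{L_V(0)}\otimes z^{L_V(0)}$. The one point requiring care is to fix this reading of the notation on the right-hand side.

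\emph{The first identity.} The left side gives
\[
\Yup(z_1)z^{L_V(0)}v=z^{s}\sum_k\Delta_k(v)z_1^{-k-1}.
\]
The right side gives
\[
\sum_k (z^{L_V(0)}\otimes z^{L_V(0)})\Delta_k(v)\,(zz_1)^{-k-1}=\sum_k z^{s+k+1}\Delta_k(v)\,z^{-k-1}z_1^{-k-1}=z^s\sum_k\Delta_k(v)z_1^{-k-1}.
\]
Here we used that every summand $v'_p\otimes v''_q$ has total weight $p+q=s+k+1$. The two sides agree.

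\emph{The equivalent form.} Apply $\Id\otimes z^{-L_V(0)}$ to both sides of the first identity. Since $z^{L_V(0)}\otimes z^{L_V(0)}=(z^{L_V(0)}\otimes\Id)\circ(\Id\otimes z^{L_V(0)})$ and $z^{-L_V(0)}z^{L_V(0)}=\Id$ on each graded piece, this yields the second identity. Conversely, applying $\Id\otimes z^{L_V(0)}$ to the second identity recovers the first.

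There is no real obstacle. The only subtle point is that all weights are integers, so the exponents $z^{s}$ and $z^{k+1}$ combine without ambiguity.
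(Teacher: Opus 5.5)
Your proof is correct and is essentially the paper's argument. The paper cites the relation $\Delta_i\circ L_V(0)=(L_V(0)\otimes L_V(0)-i-1)\circ\Delta_i$ from earlier work, where $L_V(0)\otimes L_V(0)$ denotes the total weight operator on $V\otimes V$; this is just the degree-$(i+1)$ homogeneity of $\Delta_i$ that you apply directly on homogeneous vectors, and your reading $z^{L_V(0)\otimes L_V(0)}=z^{L_V(0)}\otimes z^{L_V(0)}$ is the one the paper needs to deduce the second form.
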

\proof From \cite{W}, we know $\Delta_i\circ L_V(0)=(L_V(0)\otimes L_V(0)-i-1)\circ\Delta_i$. Thus
 \begin{eqnarray*}
 &&\Yup(z_1)\circ z^{L_V(0)}=\sum_i\Delta_i\circ z^{L_V(0)}z_1^{-i-1}\\
 &=&\sum_iz^{L_V(0)\otimes L_V(0)-i-1}\circ\Delta_iz_1^{-i-1}=z^{L_V(0)\otimes L_V(0)}\circ\Yup(zz_1).
 \end{eqnarray*}
Now, the second identity is trivial. This completes the proof.                                                                   $\hfill\Box$

\begin{Lemma}\label{Lm3.4}
Let $(V, \Yup(z), \rho,c)$ be a graded vertex operator coalgebra. Then, we have the following identity
 \begin{eqnarray*}
 \Yup(z_2)\circ e^{zL_V(-1)}=e^{z(L_V(-1)+2z_2L_V(0)+z_2^2L_V(1))}\otimes e^{ zL_V(-1)}\circ\Yup(z_2),
 \end{eqnarray*}
 or, equivalently
  \begin{eqnarray*}
 \Id\otimes e^{-zL_V(-1)}\circ\Yup(z_2)\circ e^{zL_V(-1)}=e^{z(L_V(-1)+2z_2L_V(0)+z_2^2L_V(1))}\otimes \Id\circ\Yup(z_2).
 \end{eqnarray*}
\end{Lemma}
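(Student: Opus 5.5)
The plan is to prove the infinitesimal (commutator) form of the identity first, and then integrate it in the formal variable $z$. Write
$$A(z_2)=L_V(-1)+2z_2L_V(0)+z_2^2L_V(1)\in\End(V)[z_2].$$
The first goal is the commutator formula
$$\Yup(z_2)\circ L_V(-1)-(\Id\otimes L_V(-1))\circ\Yup(z_2)=(A(z_2)\otimes\Id)\circ\Yup(z_2)\qquad(\ast)$$
on $V$. Once $(\ast)$ holds, the second displayed form of the lemma follows by a formal differential equation argument. The first form is then obtained by composing with $\Id\otimes e^{zL_V(-1)}$.

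To get $(\ast)$ I would dualize to the vertex operator algebra $V'$ of Proposition \ref{Prop2.12}(ii), whose Virasoro vector is $\rho$. By Corollary \ref{Coro2.13}, applied to $V$ regarded as a comodule over itself, $L_V(k)$ on $V$ is the transpose of $L_{V'}(-k)$ on $V'$. Pair $f\otimes g\in V'\otimes V'$ with the left side of $(\ast)$ evaluated at $v\in V$. This gives
$$(L_{V'}(1)Y(f,z_2)g-Y(f,z_2)L_{V'}(1)g,\,v)=([L_{V'}(1),Y(f,z_2)]g,\,v).$$
The standard vertex operator algebra commutator formula, which follows from the Jacobi identity with $\omega_{V'}=\rho$, reads
$$[L(1),Y(f,z_2)]=Y\big((L(1)+2z_2L(0)+z_2^2L(-1))f,\,z_2\big).$$
Transposing back with Corollary \ref{Coro2.13} turns $L_{V'}(1),L_{V'}(0),L_{V'}(-1)$ acting on $f$ into $L_V(-1),L_V(0),L_V(1)$ acting on the first tensor factor. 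This yields exactly $(A(z_2)\otimes\Id)\circ\Yup(z_2)$. Since each homogeneous space is finite dimensional, the pairing with $V'\otimes V'$ separates the graded pieces of $V\otimes V$, so $(\ast)$ holds.

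For the integration step, set
$$F(z)=(\Id\otimes e^{-zL_V(-1)})\circ\Yup(z_2)\circ e^{zL_V(-1)}.$$
This is a formal power series in $z$ whose coefficients are well defined on each $v$, because $L_V(-1)$ raises weight and every component $\Delta_k(v)$ is a finite sum. Differentiating in $z$ and applying $(\ast)$ gives
$$\frac{d}{dz}F(z)=(\Id\otimes e^{-zL_V(-1)})\circ\big[\Yup(z_2)L_V(-1)-(\Id\otimes L_V(-1))\Yup(z_2)\big]\circ e^{zL_V(-1)}=(A(z_2)\otimes\Id)\circ F(z).$$
The last equality uses that $A(z_2)\otimes\Id$ commutes with $\Id\otimes e^{-zL_V(-1)}$. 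We also have $F(0)=\Yup(z_2)$. The series $(e^{zA(z_2)}\otimes\Id)\circ\Yup(z_2)$ satisfies the same equation with the same initial value. A formal power series in $z$ is determined by this first-order recursion coefficient by coefficient, so the two series agree. This proves the second form of the lemma.

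I expect the main obstacle to be the sign and index bookkeeping in the dualization step $(\ast)$. Two points need care. The first is the correct form of $[L(1),Y(f,z_2)]$ in $V'$. The second is the swap $L(k)\leftrightarrow L(-k)$ under transposition, including which tensor factor each operator lands on. If the duality route becomes awkward, I would instead derive $(\ast)$ directly from the comodule Jacobi identity (\ref{JI3.4}). That route applies $\rho\otimes\Id\otimes\Id$, takes $\Res_{z_0}$ and the appropriate coefficients in $z_1$, and then uses Lemma \ref{Lm3.1} and the $L_V(1)$-derivation identity (\ref{L(1D)3.6}).
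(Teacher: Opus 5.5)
Your proposal is correct and has the same two-step skeleton as the paper: establish the commutator relation $(\ast)$, then exponentiate in $z$. The real difference is how you obtain $(\ast)$.

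The paper derives $(\ast)$ in one line by applying $\Res_{z_0,z_1}z_1^2\,\rho\otimes\Id\otimes\Id$ to the Jacobi identity (\ref{JI3.4}). This is exactly your fallback route. Contracting with $\rho$ and taking $\Res_{z_1}z_1^2$ picks out $L_V(-1)$ in the two left-hand terms. On the right, $\Res_{z_1}z_1^2z_1^{-1}\delta(\frac{z_2+z_0}{z_1})=(z_2+z_0)^2$, and then $\Res_{z_0}$ yields $L_V(-1)+2z_2L_V(0)+z_2^2L_V(1)$ acting on the first factor.

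Your primary route instead transports the vertex operator algebra relation $[L(1),Y(f,z_2)]=Y((L(1)+2z_2L(0)+z_2^2L(-1))f,z_2)$ through the pairing of Proposition \ref{Prop2.12}(ii). Your bookkeeping is right: $L_V(k)$ is the transpose of $L_{V'}(-k)$, and each operator lands on the correct tensor factor. One small remark: strictly, $V$ is only $\mathbb{Z}$-graded and so is not an admissible comodule over itself. The transpose relation nevertheless follows directly from Proposition \ref{Prop2.12}(ii) with $f=\rho$, so you do not need Corollary \ref{Coro2.13}.

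The trade-off between the two routes is this. The direct route is self-contained in the coalgebra axioms. Yours depends on $V'$ being a vertex operator algebra with conformal vector $\rho$, which brings in the hypotheses of Proposition \ref{Prop2.12} and the standing assumption $\rho\in V_2^*$. In context this costs nothing, since the paper already relies on that duality in Lemmas \ref{Lm3.1} and \ref{Lm3.5}.

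For the integration step, the paper simply exponentiates $(\ast)$. This works because $A(z_2)\otimes\Id$ commutes with $\Id\otimes L_V(-1)$, so $(\ast)$ iterates to give $\Yup(z_2)L_V(-1)^n=(A(z_2)\otimes\Id+\Id\otimes L_V(-1))^n\Yup(z_2)$. Your formal first-order ODE uniqueness argument is an equally valid, slightly more explicit justification of the same step.
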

\proof First, applying $\Res_{z_0,z_1}z_1^2\rho\otimes\Id\otimes\Id$ to Jacobi identity \ref{JI3.4}, we get
 \begin{eqnarray*}
 &&\Yup(z_2)\circ L_V(-1)-\Id\otimes L_V(-1)\circ\Yup(z_2)\\
 &=&(L_V(-1)+2z_2L_V(0)+z_2^2L_V(1))\otimes\Id \circ\Yup(z_2).
 \end{eqnarray*}
Hence, we have
  \begin{eqnarray*}
 &&\Yup(z_2)\circ e^{zL_V(-1)}\\
 &=&\Id\otimes e^{ zL_V(-1)}\circ e^{z(L_V(-1)+2z_2L_V(0)+z_2^2L_V(1))}\otimes\Id \circ\Yup(z_2)\\
 &=&e^{z(L_V(-1)+2z_2L_V(0)+z_2^2L_V(1))}\otimes e^{ zL_V(-1)}\circ\Yup(z_2).
 \end{eqnarray*}
Now, the second identity is trivial. This completes the proof.                                                                   $\hfill\Box$

\begin{Lemma}\label{Lm3.5}
Let $(V, \Yup(z), \rho,c)$ be a graded vertex operator coalgebra. Then, we have the following identity
 \begin{eqnarray*}
 \Id\otimes e^{-zL_V(-1)}\circ\Yup(z_2)\circ e^{zL_V(-1)}=(1-zz_2)^{-2L_V(0)}e^{z(1-zz_2)L_V(-1)}\otimes\Id\circ\Yup (\frac{z_2}{1-zz_2}).
 \end{eqnarray*}
\end{Lemma}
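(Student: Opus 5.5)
The plan is to start from the second form of Lemma \ref{Lm3.4}, which already gives
\[
\Id\otimes e^{-zL_V(-1)}\circ\Yup(z_2)\circ e^{zL_V(-1)}=e^{z(L_V(-1)+2z_2L_V(0)+z_2^2L_V(1))}\otimes\Id\circ\Yup(z_2).
\]
So everything reduces to rewriting the exponential in the first tensor factor. I would do this with a Gauss-type factorization of $e^{z(L_V(-1)+2z_2L_V(0)+z_2^2L_V(1))}$ in the group generated by the $\mathfrak{sl}_2$-triple $L_V(-1),L_V(0),L_V(1)$, putting the $L_V(1)$-factor on the right. That factor can then be absorbed into $\Yup$ by the second identity of Lemma \ref{Lm2.10}, $e^{cL_V(1)}\otimes\Id\circ\Yup(z_2)=\Yup(z_2+c)$.

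Concretely, I aim for the factorization
\[
e^{z(L_V(-1)+2z_2L_V(0)+z_2^2L_V(1))}=(1-zz_2)^{-2L_V(0)}e^{z(1-zz_2)L_V(-1)}e^{\frac{zz_2^2}{1-zz_2}L_V(1)} .
\]
Once this holds, Lemma \ref{Lm2.10} gives
\[
e^{\frac{zz_2^2}{1-zz_2}L_V(1)}\otimes\Id\circ\Yup(z_2)=\Yup\Big(z_2+\frac{zz_2^2}{1-zz_2}\Big)=\Yup\Big(\frac{z_2}{1-zz_2}\Big),
\]
which is exactly the stated right-hand side. The argument of $\Yup$ is expanded as a power series in $z$.

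To prove the factorization, the only relations used are $[L_V(-1),L_V(1)]=-2L_V(0)$ and $[L_V(0),L_V(\pm1)]=\mp L_V(\pm1)$, which follow from the Virasoro identity (\ref{VI3.5}); the central term vanishes for $k=\pm1,0$. Since the identity is a formal group identity, I would check it in the $2\times 2$ defining representation of $\mathfrak{sl}_2$. There both sides are explicit matrices with entries in $\mathbb{C}[[z,z_2]]$, and comparing them is routine.

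Alternatively, to stay intrinsic, I would show that both sides $F(z)$ satisfy the same differential equation $\frac{d}{dz}F=F\cdot(L_V(-1)+2z_2L_V(0)+z_2^2L_V(1))$ with $F(0)=\Id$. Differentiating the right side requires moving $L_V(-1)$ and $L_V(1)$ past the other factors. For this I would use $x^{L_V(0)}L_V(\mp1)x^{-L_V(0)}=x^{\pm1}L_V(\mp1)$ (recall $L_V(k)$ has degree $-k$) and the conjugation formula of Lemma \ref{Lm3.2} for $e^{aL_V(-1)}L_V(1)e^{-aL_V(-1)}$. One also has to justify that the operators make sense on $V$. Here $L_V(1)$ lowers degree and $V_s=0$ for $s\ll0$, so $e^{cL_V(1)}$ is locally finite. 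The remaining factors appear as formal power series in $z$ with coefficients polynomial in $z_2$, so all compositions are well defined coefficientwise.

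I expect the main obstacle to be this factorization step: getting the exact coefficients and the order of the $L_V(0)$- and $L_V(-1)$-factors right, since the paper writes the $L_V(0)$ factor first. If my candidate ordering fails the check, I would commute $(1-zz_2)^{-2L_V(0)}$ past $e^{aL_V(-1)}$ via the degree rule, which rescales $a$ by a power of $(1-zz_2)$, and then re-solve for the coefficients. The derivation of $\frac{d}{dz}$ of the right-hand side, using Lemma \ref{Lm3.2}, is where sign errors are most likely, so I would cross-check it against the matrix computation.
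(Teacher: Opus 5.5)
Your proposal is correct and follows the paper's proof. Lemma \ref{Lm3.4}, together with absorbing the factor $e^{\pm\frac{zz_2^2}{1-zz_2}L_V(1)}$ into $\Yup$ via the $L_V(1)$-derivation property, reduces the statement to exactly your $\mathfrak{sl}_2$ factorization. The only difference is how that factorization is checked: the paper transposes it to $V'$ and cites FHL, while you check it directly in the $2\times 2$ representation, where $M=L_V(-1)+2z_2L_V(0)+z_2^2L_V(1)$ satisfies $M^2=0$, so $e^{zM}=1+zM$ and the comparison is immediate. Your ordering $(1-zz_2)^{-2L_V(0)}e^{z(1-zz_2)L_V(-1)}$ is the correct one; the paper's intermediate display on $V$ has these two factors swapped, although its transposed version on $V'$ agrees with yours.
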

\proof From $L_V(1)$-derivation property, it is obvious that $$\Id\otimes e^{\frac{zz_2^2}{1-zz_2}L_V(1)}\circ\Yup (\frac{z_2}{1-zz_2})\circ e^{\frac{-zz_2^2}{1-zz_2}L_V(1)}=e^{\frac{-zz_2^2}{1-zz_2}L_V(1)}\otimes\Id\circ\Yup (\frac{z_2}{1-zz_2})=\Yup (z_2).$$
Using above Lemma, we just need to show
$$e^{z(L_V(-1)+2z_2L_V(0)+z_2^2L_V(1))}\circ e^{\frac{-zz_2^2}{1-zz_2}L_V(1)}=e^{z(1-zz_2)L_V(-1)}(1-zz_2)^{-2L_V(0)}$$
holds on $V$. This is equivalently to show
$$e^{\frac{-zz_2^2}{1-zz_2}L_{V'}(-1)}\circ e^{z(L_{V'}(1)+2z_2L_{V'}(0)+z_2^2L_{V'}(-1))}=e^{z(1-zz_2)L_{V'}(1)}(1-zz_2)^{-2L_{V'}(0)}$$
holds on $V'$. This identity has proven in \cite{FHL}. Thus we are done.                                         $\hfill\Box$

\begin{Theorem}\label{Thm3.6}
Let $V$ be a graded vertex operator coalgebra and $(\mathcal {M},\Yup_\mathcal {M}(z))$ an admissible $V$-comodule such that each homogeneous subspace is of finite dimensional. Let $\mathcal {M}'$ be the graded dual space of $\mathcal {M}$, define $\Yup'_{\mathcal {M}'}(z):\mathcal {M}'\rightarrow V\otimes\mathcal {M}'[[z,z^{-1}]]$ by$$(\Yup'_{\mathcal {M}'}(z)m^*,f\otimes m)=(f\otimes m^*,(-z^{-2})^{L_V(0)}e^{zL_V(-1)}\otimes \Id\circ\Yup_\mathcal {M}(z^{-1})m),$$where $m\in\mathcal {M},m^*\in\mathcal {M}',f\in V'.$ Then $(\mathcal {M}',\Yup'_{\mathcal {M}'}(z))$ is also an admissible $V$-comodule, called contragredient comodule of $\mathcal {M}$.
\end{Theorem}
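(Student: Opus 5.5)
The natural strategy is to transport the problem to the dual vertex operator algebra $V'$ and use the classical Frenkel--Huang--Lepowsky contragredient construction there. By Proposition \ref{Prop2.12}(iv), $\mathcal{M}'$ is an admissible $V'$-module $(\mathcal{M}',Y_{\mathcal{M}'})$. Since each $M(s)$ is finite dimensional, Definition \ref{Def2.3} makes $\mathcal{M}''=\mathcal{M}$ an admissible $V'$-module with vertex operator $Y'$. Then Proposition \ref{Prop2.12}(iii) applied to $\mathcal{V}=V'$ makes $(\mathcal{M}')' $, the graded dual of $\mathcal{M}$, into an admissible $(V')'=V$-comodule. Here $V''=V$ because the $V_s$ are finite dimensional.

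The task is therefore to check that the comodule structure produced by this chain coincides with the formula defining $\Yup'_{\mathcal{M}'}(z)$. For $m^*\in\mathcal{M}'$, $f\in V'$ and $m\in\mathcal{M}$, Proposition \ref{Prop2.12}(iii) gives
\[
(\Yup'_{\mathcal{M}'}(z)m^*,f\otimes m)=(m^*,Y'(f,z)m)=(Y_{\mathcal{M}'}(e^{zL_{V'}(1)}(-z^{-2})^{L_{V'}(0)}f,z^{-1})m^*,m).
\]
Here the pairing between $\mathcal{M}$ and $\mathcal{M}'$ is used symmetrically. By Proposition \ref{Prop2.12}(iv), the right-hand side equals
\[
(e^{zL_{V'}(1)}(-z^{-2})^{L_{V'}(0)}f\otimes m^*,\Yup_{\mathcal{M}}(z^{-1})m).
\]
Next I move the operators on $f$ across the pairing, using the adjoint relation between $L_{V'}(k)$ and $L_V(-k)$ from Corollary \ref{Coro2.13}. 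I will also verify that the Virasoro operators of $V'$ from Proposition \ref{Prop2.12}(ii), built from $\omega_{V'}=\rho$, are exactly the transposes $L_{V'}(k)=L_V(-k)^*$ on $V'$; this is a residue computation pairing $(Y(\rho,z)g,v)=(\rho\otimes g,\Yup(z)v)$. The transposes then turn $e^{zL_{V'}(1)}$ into $e^{zL_V(-1)}$ and $(-z^{-2})^{L_{V'}(0)}$ into $(-z^{-2})^{L_V(0)}$. Taking the transpose of the composite reverses the order of the two factors, giving exactly $(-z^{-2})^{L_V(0)}e^{zL_V(-1)}\otimes\Id$ applied to $\Yup_{\mathcal{M}}(z^{-1})m$. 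Hence the two structures agree.

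The admissibility axioms (i)--(v) for $\mathcal{M}'$ then come from Proposition \ref{Prop2.12}(iii), together with the grading $\mathcal{M}'=\oplus M(s)^*$. One point needs care: that proposition requires $\mathcal{V}=V'$ to be a genuine vertex operator algebra, which is Proposition \ref{Prop2.12}(ii) applied under the standing hypotheses $\rho\in V_2^*$ and $c\in V_0^*$ nonzero.

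The main obstacle is the bookkeeping in the last two steps. It must be confirmed that $(V')'\cong V$ identifies $\Yup_{(V')'}$ with $\Yup$, $c_{(V')'}$ with $c$, and $\rho_{(V')'}$ with $\rho$, so that the comodule obtained really is over $V$ with the original $L_V(k)$. This amounts to dualizing the defining formulas in Proposition \ref{Prop2.12} twice. If the exact transpose $L_{V'}(k)=L_V(-k)^*$ needs justification beyond Corollary \ref{Coro2.13}, I would extract it by applying $\Res_z z^{k+1}$ to the defining pairing for $Y(\rho,z)$. Should the indirect route fail, the fallback is a direct verification of the Jacobi identity for $\Yup'_{\mathcal{M}'}$, mimicking FHL. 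That verification would use Lemmas \ref{Lm3.3}--\ref{Lm3.5} to conjugate $\Yup_{\mathcal{M}}(z^{-1})$ by $e^{zL_V(-1)}$ and $z^{L_V(0)}$; those lemmas appear to have been prepared for exactly this purpose.
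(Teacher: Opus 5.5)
Your argument is correct, but it is not the route the paper takes.

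\textbf{The paper's route.} It verifies each axiom directly on $\Yup'_{\mathcal{M}'}(z)$:
\begin{enumerate}[label=(\roman*)]
\item it computes $(\rho\otimes\Id)\circ\Yup'_{\mathcal{M}'}(z)=\sum_k L_V(k)z^{k-2}$ and cites Corollary~\ref{Coro2.13} for the Virasoro relations;
\item it checks the counit, grading and truncation by hand;
\item it derives the $L_V(1)$-derivation property from Lemma~\ref{Lm3.2};
\item it gets the Jacobi identity by rewriting the Jacobi identity of $\mathcal{M}$ in the variables $z_1^{-1},z_2^{-1},-z_0/(z_1z_2)$ and conjugating with Lemmas~\ref{Lm3.3} and~\ref{Lm3.5}. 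This is essentially your fallback.
\end{enumerate}

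\textbf{Your route.} You realize $\Yup'_{\mathcal{M}'}$ as the composite of three known dualities: Proposition~\ref{Prop2.12}(iv), the FHL contragredient over $V'$ from Definition~\ref{Def2.3}, and Proposition~\ref{Prop2.12}(iii) together with $V''=V$. A short pairing computation using the transpose rule $L_{V'}(k)=L_V(-k)^{T}$ then identifies the result with the given formula; the paper itself uses this rule in Lemma~\ref{Lm3.5} and Proposition~\ref{Prop3.7}. All axioms are inherited at once, and Proposition~\ref{Prop3.7} follows for free from $\mathcal{N}''\cong\mathcal{N}$.

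\textbf{What your route costs.} It relies entirely on black boxes:
\begin{enumerate}[label=(\alph*)]
\item finite-dimensionality, to identify $\mathcal{M}''=\mathcal{M}$ and $V''=V$;
\item the contragredient construction for merely $\mathbb{N}$-graded admissible modules. This is asserted in Definition~\ref{Def2.3}, and FHL's argument uses only the grading of the algebra;
\item Proposition~\ref{Prop2.12}(iii), including its Virasoro and $L(1)$-derivation claims;
\item $c\in V_0^*$ and $\rho\in V_2^*$, so that $c_{V''}=c$ and $\rho_{V''}=\rho$. The paper's step (ii) tacitly needs $c\in V_0^*$ as well.
\end{enumerate}

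\textbf{What the paper's route buys.} Its computation stays on the coalgebra side and develops conjugation formulas that it implicitly invokes again for Proposition~\ref{Prop4.4}. However, Lemma~\ref{Lm3.5} is itself proved by passing to $V'$, so neither proof is fully intrinsic.

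\textbf{A small slip.} Proposition~\ref{Prop2.12}(iii) turns the graded dual of the $V'$-module $\mathcal{M}''=\mathcal{M}$, namely $\mathcal{M}'$, into a $V$-comodule. $(\mathcal{M}')'$ is $\mathcal{M}$ itself, not the graded dual of $\mathcal{M}$. Your subsequent computation uses the right object.
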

\proof Let $\mathcal {M}=\oplus_{s\geq0}M(s)$. Then $\mathcal {M}'=\oplus_{s\geq0} M(s)^*$.

(i) First, we show $\mathcal {M}'$ satisfies the Virasoro properties. Let $m^*\in\mathcal {M}'$ and $m\in \mathcal {M}$, we have
    \begin{eqnarray*}
     &&((\rho\otimes\Id)\circ\Yup'_{\mathcal {M}'}(z)m^*,m)=(\Yup'_{\mathcal {M}'}(z)m^*,\rho\otimes m)\\
     &=&(\rho\otimes m^*,(-z^{-2})^{L_V(0)}e^{zL_V(-1)}\otimes \Id\circ\Yup_\mathcal {M}(z^{-1})m)\\
     &=&(e^{zL_V(1)}(-z^{-2})^{L_V(0)}\rho\otimes m^*,\Yup_\mathcal {M}(z^{-1})m)\\
     &=&(\frac{1}{z^4}\rho\otimes m^*,\Yup_\mathcal {M}(z^{-1})m)=(\frac{1}{z^4} m^*,\rho\otimes \Id\circ\Yup_\mathcal {M}(z^{-1})m)\\
     &=&\sum_k(\frac{1}{z^4}m^*,L_V(k)z^{-k+2}m)=(\sum_kL_V(-k)z^{-k-2}m^*,m)\\
     &=&(\sum_kL_V(k)z^{k-2}m^*,m).
    \end{eqnarray*}
Hence, we get $(\rho\otimes\Id)\circ\Yup'_{\mathcal {M}'}(z)=\sum_kL_V(k)z^{k-2}.$ From Corollary \ref{Coro2.13}, we know the Virasoro properties hold.

(ii) Second, for $m^*\in\mathcal {M}'$ and $m\in \mathcal {M}$, we have
    \begin{eqnarray*}
     &&((c\otimes\Id)\circ\Yup'_{\mathcal {M}'}(z)m^*,m)=(\Yup'_{\mathcal {M}'}(z)m^*,c\otimes m)\\
     &=&(c\otimes m^*,(-z^{-2})^{L_V(0)}e^{zL_V(-1)}\otimes \Id\circ\Yup_\mathcal {M}(z^{-1})m)\\
     &=&(e^{zL_V(1)}(-z^{-2})^{L_V(0)}c\otimes m^*,\Yup_\mathcal {M}(z^{-1})m)\\
     &=&(c\otimes m^*,\Yup_\mathcal {M}(z^{-1})m)=(m^*,c\otimes \Id\circ\Yup_\mathcal {M}(z^{-1})m)\\
     &=&(m^*,m).
    \end{eqnarray*}
Hence, we get $(c\otimes\Id)\circ\Yup_{\mathcal {M}'}(z)=\Id.$

(iii) Third, let $m^*\in\mathcal {M}'$ be homogeneous of weight $s$. For any $k\in\mathbb{Z}$, suppose $m\in M(t),f\in V'$, we have
    \begin{eqnarray*}
    &&(\Res_zz^k\Yup'_{\mathcal {M}'}(z)m^*,f\otimes m)=\Res_zz^k(f\otimes m^*,(-z^{-2})^{L_V(0)}e^{zL_V(-1)}\otimes \Id\circ\Yup_\mathcal {M}(z^{-1})m)\\
    &=&\sum_{i\geq0,j\geq0,l}\Res_zz^{k+i}(f\otimes m^*,(-z^{-2})^{L_V(0)}\frac{(L_V(-1))^i}{i!}m_{l+1+t-j}'\otimes m_j'')z^{l+1})\\
    &=&\sum_{i\geq0,j\geq0,l}\Res_zz^{k+i-2(l+1+t-j+i)}(f\otimes m^*,\frac{(L_V(-1))^i}{i!}(-1)^{l+1+t-j}m_{l+1+t-j}'\otimes m_j'')z^{l+1})\\
    &=&\sum_{i\geq0}(f\otimes m^*,\frac{(L_V(-1))^i}{i!}(-1)^{k-i+s+1-t}m_{k-i+s+1-t}'\otimes m_s'')).
    \end{eqnarray*}
Hence, we get $f\in V_{k+s+1-t}'$. Therefore, we have $$\Delta_{\mathcal {M}',k}(m^*)\in\oplus_{t\in\mathbb{N}} V_{k+s+1-t}\otimes M(t)^*=(V\otimes\mathcal {M}')_{k+s+1},$$ i.e., $\mathcal {M}'$ satisfies the gradation condition.

Since $V$ and $\mathcal {M}$ are lower truncated and each homogeneous subspace is of finite dimensional, it is obvious that $\Delta_{\mathcal {M}',k}(m^*)$ is a finite sum and $\Delta_{\mathcal {M}',k}(m^*)=0$ for $k<<0.$

(iv) Now we prove the $L_V(1)$-derivation property. Let $f\in V',m^*\in\mathcal {M}',m\in\mathcal {M}$, we have
\begin{eqnarray*}
&&\frac{d}{dz}(\Yup'_{\mathcal {M}'}(z)m^*,f\otimes m)=\frac{d}{dz}(f\otimes m^*,(-z^{-2})^{L_V(0)}e^{zL_V(-1)}\otimes \Id\circ\Yup_\mathcal {M}(z^{-1})m)\\
&=&(f\otimes m^*,-2L_V(0)z^{-1}(-z^{-2})^{L_V(0)}e^{zL_V(-1)}\otimes \Id\circ\Yup_\mathcal {M}(z^{-1})m)\\
&&+(f\otimes m^*,(-z^{-2})^{L_V(0)}e^{zL_V(-1)}L_V(-1)\otimes \Id\circ\Yup_\mathcal {M}(z^{-1})m)\\
&&-z^{-2}(f\otimes m^*,(-z^{-2})^{L_V(0)}e^{zL_V(-1)}\otimes \Id\circ\frac{d}{dz^{-1}}\Yup_\mathcal {M}(z^{-1})m).
\end{eqnarray*}
Using $L_V(1)$-derivation property for $\mathcal {M}$, we have
\begin{eqnarray*}
&&-z^{-2}(f\otimes m^*,(-z^{-2})^{L_V(0)}e^{zL_V(-1)}\otimes \Id\circ\frac{d}{dz^{-1}}\Yup_\mathcal {M}(z^{-1})m)\\
&=&-z^{-2}(f\otimes m^*,(-z^{-2})^{L_V(0)}e^{zL_V(-1)}L_V(1)\otimes \Id\circ\Yup_\mathcal {M}(z^{-1})m)\\
&=&-z^{-2}(f\otimes m^*,(-z^{-2})^{L_V(0)}(L_V(1)e^{zL_V(-1)}-2zL_V(0)e^{zL_V(-1)}+z^2L_V(-1)e^{zL_V(-1)})\\
&&~~~~~~~~~~~~\otimes \Id\circ\Yup_\mathcal {M}(z^{-1})m)\\
&=&(f\otimes m^*,(L_V(1)(-z^{-2})^{L_V(0)}e^{zL_V(-1)}+2z^{-1}(-z^{-2})^{L_V(0)}L_V(0)e^{zL_V(-1)}\\
&&~~~~~~~~~~~~-(-z^{-2})^{L_V(0)}L_V(-1)e^{zL_V(-1)})\otimes \Id\circ\Yup_\mathcal {M}(z^{-1})m).
\end{eqnarray*}
From above two identities, we have
\begin{eqnarray*}
&&\frac{d}{dz}(\Yup'_{\mathcal {M}'}(z)m^*,f\otimes m)\\
&=&(f\otimes m^*,L_V(1)(-z^{-2})^{L_V(0)}e^{zL_V(-1)}\otimes \Id\circ\Yup_\mathcal {M}(z^{-1})m)\\
&=&(L_V(-1)f\otimes m^*,(-z^{-2})^{L_V(0)}e^{zL_V(-1)}\otimes \Id\circ\Yup_\mathcal {M}(z^{-1})m)\\
&=&(\Yup'_{\mathcal {M}'}(z)m^*,L_V(-1)f\otimes m)\\
&=&((L_V(1)\otimes \Id)\circ\Yup'_{\mathcal {M}'}(z)m^*,f\otimes m).
\end{eqnarray*}
This proves the $L_V(1)$-derivation property.

(v) Last, we prove the Jacobi identity. Let $f,g\in V',m^*\in\mathcal {M}',m\in\mathcal {M}$, we need to show
    \begin{eqnarray}\label{ID3.11}
      &&(z_0^{-1}\delta(\frac{z_1-z_2}{z_0})(\Id\otimes \Yup'_{\mathcal {M}'} (z_2))\circ \Yup'_{\mathcal {M}'} (z_1),f\otimes g\otimes m)\nonumber\\
      &&~~~~~~~~~~-(z_0^{-1}\delta(\frac{z_2-z_1}{-z_0})(\tau\otimes\Id )\circ(\Id\otimes \Yup'_{\mathcal {M}'} (z_1))\circ \Yup'_{\mathcal {M}'} (z_2),f\otimes g\otimes m)\nonumber\\
      &&=(z_1^{-1}\delta(\frac{z_2+z_0}{z_1})(\Yup (z_0)\otimes \Id )\circ \Yup'_{\mathcal {M}'} (z_2),f\otimes g\otimes m).
    \end{eqnarray}

First by definition of $\Yup_{\mathcal {M}'}$, we have
\begin{eqnarray*}
&&(\Id\otimes \Yup'_{\mathcal {M}'}(z_2)\circ\Yup'_{\mathcal {M}'}(z_1)m^*,f\otimes g\otimes m)\\
&=&(g\otimes f\otimes m^*,\Id\otimes(-z_1^{-2})^{L_V(0)}e^{z_1L_V(-1)}\otimes\Id\circ\Id\otimes\Yup_\mathcal {M}(z_1^{-1})\\
&&~~~~~~~\circ (-z_2^{-2})^{L_V(0)}e^{z_2L_V(-1)}\otimes\Id\circ\Yup_\mathcal {M}(z_2^{-1})m)\\
&=&(g\otimes f\otimes m^*,\Id\otimes(-z_1^{-2})^{L_V(0)}e^{z_1L_V(-1)}\otimes\Id\circ(-z_2^{-2})^{L_V(0)}e^{z_2L_V(-1)}\otimes\Id\otimes\Id\\
&&~~~~~~~\circ \Id\otimes\Yup_\mathcal {M}(z_1^{-1})\circ\Yup_\mathcal {M}(z_2^{-1})m)\\
&=&(g\otimes f\otimes m^*,(-z_2^{-2})^{L_V(0)}e^{z_2L_V(-1)}\otimes(-z_1^{-2})^{L_V(0)}e^{z_1L_V(-1)}\otimes\Id\\
&&~~~~~~~\circ\Id\otimes\Yup_\mathcal {M}(z_1^{-1})\circ\Yup_\mathcal {M}(z_2^{-1})m)\\
&=&(f\otimes g\otimes m^*,(-z_1^{-2})^{L_V(0)}e^{z_1L_V(-1)}\otimes(-z_2^{-2})^{L_V(0)}e^{z_2L_V(-1)}\otimes\Id\\
&&~~~~~~~\circ\tau\otimes\Id\circ\Id\otimes\Yup_\mathcal {M}(z_1^{-1})\circ\Yup_\mathcal {M}(z_2^{-1})m),
\end{eqnarray*}
and
\begin{eqnarray*}
&&(\tau\otimes\Id\circ\Id\otimes \Yup'_{\mathcal {M}'}(z_1)\circ\Yup'_{\mathcal {M}'}(z_2)m^*,f\otimes g\otimes m)\\
&=&(f\otimes g\otimes m^*,(-z_1^{-2})^{L_V(0)}e^{z_1L_V(-1)}\otimes(-z_2^{-2})^{L_V(0)}e^{z_2L_V(-1)}\otimes\Id\\
&&~~~~~~~\circ \Id\otimes\Yup_\mathcal {M}(z_2^{-1})\circ\Yup_\mathcal {M}(z_1^{-1})m).
\end{eqnarray*}
Since $\mathcal {M}$ is an admissible $V$-comodule, we have the following identity
    \begin{eqnarray*}
      &&(\frac{-z_0}{z_1z_2})^{-1}\delta(\frac{z_1^{-1}-z_2^{-1}}{-z_0/z_1z_2})(\Id\otimes \Yup_\mathcal {M} (z_2^{-1}))\circ \Yup_\mathcal {M} (z_1^{-1})m\\
      &&~~~~~~~~~~-(\frac{-z_0}{z_1z_2})^{-1}\delta(\frac{z_2^{-1}-z_1^{-1}}{z_0/z_1z_2})(\tau\otimes\Id )\circ(\Id\otimes \Yup_\mathcal {M} (z_1^{-1}))\circ \Yup_\mathcal {M} (z_2^{-1})m\\
      &&=z_2\delta(\frac{z_1^{-1}+z_0/z_1z_2}{z_2^{-1}})(\Yup (-z_0/z_1z_2)\otimes \Id )\circ \Yup_\mathcal {M} (z_2^{-1})m.
    \end{eqnarray*}
This is equivalent to
    \begin{eqnarray*}
      &&-z_0^{-1}\delta(\frac{z_2-z_1}{-z_0})(\Id\otimes \Yup_\mathcal {M} (z_2^{-1}))\circ \Yup_\mathcal {M} (z_1^{-1})m\\
      &&~~~~~~~~~~+z_0^{-1}\delta(\frac{z_1-z_2}{z_0})(\tau\otimes\Id )\circ(\Id\otimes \Yup_\mathcal {M} (z_1^{-1}))\circ \Yup_\mathcal {M} (z_2^{-1})m\\
      &&=z_1^{-1}\delta(\frac{z_2+z_0}{z_1})(\Yup (-z_0/z_1z_2)\otimes \Id )\circ \Yup_\mathcal {M} (z_2^{-1})m.
    \end{eqnarray*}
Now, the left hand side of identity (\ref{ID3.11}) equals to
    \begin{eqnarray*}
      &&(f\otimes g\otimes m^*,(-z_1^{-2})^{L_V(0)}e^{z_1L_V(-1)}\otimes(-z_2^{-2})^{L_V(0)}e^{z_2L_V(-1)}\otimes\Id\circ  \\
      &&~~~~~~~~~~~~~~ z_1^{-1}\delta(\frac{z_2+z_0}{z_1})(\Yup (-z_0/z_1z_2)\otimes \Id )\circ \Yup_\mathcal {M} (z_2^{-1})m).
    \end{eqnarray*}
Now, by definition, we have
\begin{eqnarray*}
&&(\Yup(z_0)\otimes\Id \circ\Yup'_{\mathcal {M}'}(z_2)m^*,f\otimes g\otimes m)\\
&=&(f\otimes g\otimes m^*,\Yup(z_0)\otimes\Id\circ (-z_2^{-2})^{L_V(0)}e^{z_2L_V(-1)}\otimes\Id\circ\Yup_\mathcal {M}(z_2^{-1})m).
\end{eqnarray*}
Hence, it is enough to show
    \begin{eqnarray*}
    &&\Yup(z_0)\otimes\Id\circ (-z_2^{-2})^{L_V(0)}e^{z_2L_V(-1)}\otimes\Id\circ\Yup_\mathcal {M}(z_2^{-1})m\\
      &=&(-(z_2+z_0)^{-2})^{L_V(0)}e^{(z_2+z_0)L_V(-1)}\otimes(-z_2^{-2})^{L_V(0)}e^{z_2L_V(-1)}\otimes\Id\circ  \\
      &&~~~~~~~~~~~~~~ \Yup (-z_0/(z_2+z_0)z_2)\otimes \Id \circ \Yup_\mathcal {M} (z_2^{-1})m.
    \end{eqnarray*}
This is equivalent to show
    \begin{eqnarray*}
    &&\Yup(z_0)\circ (-z_2^{-2})^{L_V(0)}e^{z_2L_V(-1)}\\
      &=&(-(z_2+z_0)^{-2})^{L_V(0)}e^{(z_2+z_0)L_V(-1)}\otimes(-z_2^{-2})^{L_V(0)}e^{z_2L_V(-1)}\circ \Yup (-z_0/(z_2+z_0)z_2),
    \end{eqnarray*}
or, equivalently,
    \begin{eqnarray*}
    &&\Id\otimes e^{-z_2L_V(-1)}(-z_2^{2})^{L_V(0)}\circ \Yup(z_0)\circ (-z_2^{-2})^{L_V(0)}e^{z_2L_V(-1)}\\
      &=&(-(z_2+z_0)^{-2})^{L_V(0)}e^{(z_2+z_0)L_V(-1)}\otimes\Id\circ \Yup (-z_0/(z_2+z_0)z_2).
    \end{eqnarray*}
Now, from Lemma \ref{Lm3.3} and Lemma \ref{Lm3.5}, we have
    \begin{eqnarray*}
    &&\Id\otimes e^{-z_2L_V(-1)}(-z_2^{2})^{L_V(0)}\circ \Yup(z_0)\circ (-z_2^{-2})^{L_V(0)}e^{z_2L_V(-1)}\\
    &=&(-z_2^{-2})^{L_V(0)}\otimes \Id\circ \Id\otimes e^{-z_2L_V(-1)}\circ\Yup (-z_0z_2^{-2})\circ e^{z_2L_V(-1)}\\
    &=&(-z_2^{-2})^{L_V(0)}\otimes \Id\circ (1+z_0/z_2)^{-2L_V(0)}e^{z_2(1+z_0/z_2)L_V(-1)}\otimes\Id\circ \Yup (\frac{-z_0}{z_2(z_2+z_0)})\\
      &=&(-(z_2+z_0)^{-2})^{L_V(0)}e^{(z_2+z_0)L_V(-1)}\otimes\Id\circ \Yup (-z_0/(z_2+z_0)z_2).
    \end{eqnarray*}
This completes the proof.                                                  $\hfill\Box$

\begin{Proposition}\label{Prop3.7}
Let $V$ be a graded vertex operator coalgebra and $(\mathcal {M},\Yup_\mathcal {M}(z))$ an admissible $V$-comodule such that each homogeneous subspace is of finite dimensional. Then, $\mathcal {M}''$ is isomorphic to $\mathcal {M}$ as admissible $V$-comodules.
\end{Proposition}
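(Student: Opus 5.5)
The plan is to use the canonical linear isomorphism $\iota:\mathcal {M}\rightarrow\mathcal {M}''$ given by $(\iota(m),m^*)=(m^*,m)$ for $m\in\mathcal {M}$ and $m^*\in\mathcal {M}'$. Because every $M(s)$ is finite dimensional, $\iota$ is a graded linear bijection $M(s)\rightarrow (M(s)^*)^*$. It then suffices to check that $\iota$ is a $V$-comodule homomorphism, that is,
$$(\Id\otimes\iota)\circ\Yup_\mathcal {M}(z)=\Yup''_{\mathcal {M}''}(z)\circ\iota.$$
Since $V\otimes\mathcal {M}''$ is paired nondegenerately (degreewise) with $V'\otimes\mathcal {M}'$, I will verify this after pairing both sides with $f\otimes m^*$, where $f\in V'$ and $m^*\in\mathcal {M}'$ are homogeneous.

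Applying the definition in Theorem \ref{Thm3.6} twice gives
\begin{eqnarray*}
(\Yup''_{\mathcal {M}''}(z)\iota(m),f\otimes m^*)&=&(f\otimes\iota(m),(-z^{-2})^{L_V(0)}e^{zL_V(-1)}\otimes\Id\circ\Yup'_{\mathcal {M}'}(z^{-1})m^*)\\
&=&(\Yup'_{\mathcal {M}'}(z^{-1})m^*,e^{zL_V(1)}(-z^{-2})^{L_V(0)}f\otimes m).
\end{eqnarray*}
Here I move the operators to the $V'$ side using the adjointness $(L_V(k)v,g)=(v,L_V(-k)g)$ from Corollary \ref{Coro2.13}, applied to $V$ itself. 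Using the definition once more with $z\mapsto z^{-1}$, the right-hand side becomes
$$(g\otimes m^*,(-z^{2})^{L_V(0)}e^{z^{-1}L_V(-1)}\otimes\Id\circ\Yup_\mathcal {M}(z)m),\qquad g=e^{zL_V(1)}(-z^{-2})^{L_V(0)}f.$$
Transferring everything back onto $\mathcal {M}$, the claim reduces to the operator identity on $V$
$$(-z^{-2})^{L_V(0)}e^{zL_V(-1)}(-z^{2})^{L_V(0)}e^{z^{-1}L_V(-1)}=\Id,$$
where the operators act on the first tensor factor of $\Yup_\mathcal {M}(z)m$. Equivalently, I can dualize to $V'$ and prove $e^{z^{-1}L_{V'}(1)}(-z^{2})^{L_{V'}(0)}e^{zL_{V'}(1)}(-z^{-2})^{L_{V'}(0)}=\Id$. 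This is exactly the identity that gives $\mathcal {N}''\cong\mathcal {N}$ in \cite{FHL}, as recalled in Definition \ref{Def2.3}, and that is how Lemma \ref{Lm3.5} was handled.

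To verify the identity directly, I will use the conjugation rule $(-z^{-2})^{L_V(0)}e^{zL_V(-1)}(-z^{2})^{L_V(0)}=e^{z(-z^{-2})^{-1}\cdot\ldots}$, which follows from $L_V(-1)$ having degree $+1$ (Remark \ref{Rmk2.11}). Concretely,
$$a^{L_V(0)}L_V(-1)a^{-L_V(0)}=aL_V(-1)\quad\text{with } a=-z^{-2},$$
so the product collapses to $e^{-z^{-1}L_V(-1)}e^{z^{-1}L_V(-1)}=\Id$.

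The main obstacle is bookkeeping rather than substance. The powers $(-z^{\pm2})^{L_V(0)}$ must be interpreted consistently: the branch of $(-1)^{L_V(0)}$ is harmless only because the weights are integral on $V$. The order of operators must also be tracked correctly under transposition, keeping the substitution $z\mapsto z^{-1}$ compatible with the formal series expansions. I would carry out the computation on homogeneous $f$ and on each $\Delta_{\mathcal {M},k}(m)$, using Remark \ref{Rmk3.22}, so that every sum is finite and the equality can be checked coefficientwise in $z$. Finally, $\iota$ is bijective and intertwines the coactions, so it is an isomorphism of admissible $V$-comodules.
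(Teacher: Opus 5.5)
Your proposal is correct and follows essentially the same route as the paper: identify $\mathcal{M}''$ with $\mathcal{M}$, apply the defining formula of Theorem \ref{Thm3.6} twice, move the operators to $V'$, and reduce to the identity $e^{z^{-1}L_{V'}(1)}(-z^{2})^{L_{V'}(0)}e^{zL_{V'}(1)}(-z^{-2})^{L_{V'}(0)}=\Id$. The one addition is that you verify this identity directly, via the conjugation $a^{L_V(0)}L_V(-1)a^{-L_V(0)}=aL_V(-1)$ with $a=-z^{-2}$ and the integrality of the weights; the paper simply asserts this last equality.
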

\proof It is enough to prove $\Yup_{\mathcal {M}} (z)=\Yup_{\mathcal {M}''}''(z)$.

Let $m\in \mathcal {M}=\mathcal {M}'',m^*\in\mathcal {M}',f\in V',$ we have
    \begin{eqnarray*}
    &&(\Yup_{\mathcal {M}''}''(z)m,f\otimes m^*)\\
    &=&(f\otimes m,(-z^{-2})^{L_V(0)}e^{zL_V(-1)}\otimes \Id\circ\Yup'_{\mathcal {M}'}(z^{-1})m^*)\\
    &=&(e^{zL_{V'}(1)}(-z^{-2})^{L_{V'}(0)}f\otimes m^*,(-z^{2})^{L_V(0)}e^{z^{-1}L_V(-1)}\otimes \Id\circ\Yup_\mathcal {M}(z)m)\\
    &=&(e^{z^{-1}L_{V'}(1)}(-z^{2})^{L_{V'}(0)}e^{zL_{V'}(1)}(-z^{-2})^{L_{V'}(0)}f\otimes m^*,\Yup_\mathcal {M}(z)m)\\
    &=&(\Yup_{\mathcal {M}}(z)m,f\otimes m^*).
    \end{eqnarray*}
This completes the proof.                                                                    $\hfill\Box$

\section{Cointertwining operators among comodules}
In this section, we study the cointertwining operators among admissible $V$-comodules.

\begin{Definition}
Let $\mathcal {M}^1,\mathcal {M}^2,\mathcal {M}^3$ be three admissible $V$-comodules. A cointertwining operator $\mathcal {W}(z)$ of type $\tbinom{\mathcal {M}^1~~\mathcal {M}^2}{\mathcal {M}^3}$ is a linear map
   \begin{eqnarray*}
  \mathcal {W}(z):~\mathcal {M}^3&\rightarrow&~\mathcal {M}^1\otimes \mathcal {M}^2\{z\},\\
  m^3&\mapsto& \mathcal {W}(z)m^3=\sum_{t\in\mathbb{C}}\Delta_t(m^3)z^{-t-1},
  \end{eqnarray*}
  where $\Delta_t(m^3)\in \mathcal {M}^1\otimes \mathcal {M}^2$, satisfying the following conditions:

  (i) For any $m^3\in \mathcal {M}^3$, $\Delta_t(m^3)=0$ for $\R~ t\in\mathbb{R}$ sufficiently small.

  (ii) The $L_V(1)$-derivation $\frac{d}{dz}\mathcal {W}(z)=L_V(1)\otimes \Id\circ \mathcal {W}(z)$ holds.

  (iii) The following Jacobi identity
  \begin{eqnarray}\label{JI4.1}
      &&z_0^{-1}\delta(\frac{z_1-z_2}{z_0})(\Id\otimes \mathcal {W} (z_2))\circ \Yup_{\mathcal {M}^3} (z_1)\nonumber\\
      &&~~~~~~~~~~-z_0^{-1}\delta(\frac{z_2-z_1}{-z_0})(\tau\otimes\Id )\circ(\Id\otimes \Yup_{\mathcal {M}^2} (z_1))\circ \mathcal {W} (z_2)\nonumber\\
      &&=z_1^{-1}\delta(\frac{z_2+z_0}{z_1})(\Yup_{\mathcal {M}^1} (z_0)\otimes \Id )\circ \mathcal {W} (z_2)
  \end{eqnarray}
holds.

  Let $I_V\tbinom{\mathcal {M}^1~~\mathcal {M}^2}{\mathcal {M}^3}$ be the set of all cointertwining operator of type $\tbinom{\mathcal {M}^1~~\mathcal {M}^2}{\mathcal {M}^3}$. Then it is a vector space. Set $N_{\mathcal {M}^3}^{\mathcal {M}^1,\mathcal {M}^2}=\dim I_V\tbinom{\mathcal {M}^1~~\mathcal {M}^2}{\mathcal {M}^3}$, called the fusion rule of type $\tbinom{\mathcal {M}^1~~\mathcal {M}^2}{\mathcal {M}^3}$.
\end{Definition}

\begin{Remark}
(i) Let $(V,\Yup (z),c,\rho)$ be a graded vertex operator coalgebra, then $\Yup (z)$ is a cointertwining operator of type $\tbinom{V~~V}{V}$.

(ii) Let $\mathcal {M},\Yup_\mathcal {M}(z)$ be an admissible $V$-comodule, then $\Yup_\mathcal {M}(z)$ is a cointertwining operator of type $\tbinom{V~~\mathcal {M}}{\mathcal {M}}$.
\end{Remark}

\begin{Proposition}\label{Prop4.3}
Let $\mathcal {W}(z)$ be a cointertwining operator of type $\tbinom{\mathcal {M}^1~~\mathcal {M}^2}{\mathcal {M}^3}$. Define $\Omega(\mathcal {W})$ by $$\Omega(\mathcal {W})(z)=\tau\circ\mathcal {W}(-z)\circ e^{zL_V(1)}.$$ Then, $\Omega(\mathcal {W})$ is a cointertwining operator of type $\tbinom{\mathcal {M}^2~~\mathcal {M}^1}{\mathcal {M}^3}$. Furthermore, we have $$N_{\mathcal {M}^3}^{\mathcal {M}^1,\mathcal {M}^2}=N_{\mathcal {M}^3}^{\mathcal {M}^2,\mathcal {M}^1}.$$
\end{Proposition}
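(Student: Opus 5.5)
We check the three defining conditions for $\Omega(\mathcal{W})(z)=\tau\circ\mathcal{W}(-z)\circ e^{zL_V(1)}$, mirroring the skew-symmetry argument for intertwining operators in \cite{FHL}. Then we show $\Omega$ is a linear isomorphism by proving $\Omega^2=\Id$.

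\emph{Lower truncation.} Since $L_V(1)$ lowers the $\mathbb{N}$-grading of $\mathcal{M}^3$, $e^{zL_V(1)}m^3$ is a finite sum. Condition (i) for $\Omega(\mathcal{W})$ therefore follows from (i) for $\mathcal{W}$; the sign change $z\mapsto -z$ only introduces the factor $e^{\pm\pi i t}$ on the $z^{-t-1}$ terms.

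\emph{$L_V(1)$-derivation.} Differentiating gives
\[
\frac{d}{dz}\Omega(\mathcal{W})(z)=-\tau\circ\mathcal{W}'(-z)\circ e^{zL_V(1)}+\tau\circ\mathcal{W}(-z)\circ L_V(1)e^{zL_V(1)}.
\]
For this we need the companion identity $\mathcal{W}(z)\circ L_V(1)-(\Id\otimes L_V(1))\circ\mathcal{W}(z)=\frac{d}{dz}\mathcal{W}(z)$, the analogue of (\ref{L(1D)M3.6}). I would first derive it from the Jacobi identity (\ref{JI4.1}) by applying $\Res_{z_0}\Res_{z_1}z_1^{2}(\rho\otimes\Id\otimes\Id)$, the same device used in the proof of Lemma \ref{Lm3.4}. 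This gives $L_V(1)$ on $\mathcal{M}^3$ minus $\Id\otimes L_V(1)$ on $\mathcal{M}^2$ as $(L_V(1)+\ldots)\otimes\Id$ on $\mathcal{M}^1$, and in combination with (ii) this yields total $L_V(1)$-compatibility.
With that identity, $\tau\circ\mathcal{W}(-z)L_V(1)=\tau\circ(\Id\otimes L_V(1))\mathcal{W}(-z)+\tau\circ\mathcal{W}'(-z)$. The $\mathcal{W}'$ terms cancel, leaving $(L_V(1)\otimes\Id)\circ\Omega(\mathcal{W})(z)$ as required.

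\emph{Jacobi identity.} This is the main step. Start from (\ref{JI4.1}) for $\mathcal{W}$ with $(z_0,z_1,z_2)$ replaced by $(-z_0,z_1-z_2,-z_2)$ after precomposing with $e^{z_2L_V(1)}$. Use Proposition \ref{Prop2.15} to move $e^{z_2L_V(1)}$ past $\Yup_{\mathcal{M}^3}$:
\[
(\Id\otimes e^{z_2L_V(1)})\Yup_{\mathcal{M}^3}(z_1)=\Yup_{\mathcal{M}^3}(z_1+z_2)e^{z_2L_V(1)}.
\]
Applying the permutation of tensor factors that swaps $\mathcal{M}^1$ and $\mathcal{M}^2$ then interchanges the first and third terms of the Jacobi identity. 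The key technical step is converting the term $(\Yup_{\mathcal{M}^1}(z_0)\otimes\Id)\circ\mathcal{W}(z_2)$ into $(\tau\otimes\Id)(\Id\otimes\Yup_{\mathcal{M}^1}(z_1))\circ\Omega(\mathcal{W})(z_2)$. For this I would use Proposition \ref{Prop2.15} in the form $(e^{z_0L_V(1)}\otimes\Id)\Yup_{\mathcal{M}}(z)=\Yup_{\mathcal{M}}(z+z_0)$, together with the $\delta$-function substitution identity $z_1^{-1}\delta\bigl(\frac{z_2+z_0}{z_1}\bigr)=z_2^{-1}\delta\bigl(\frac{z_1-z_0}{z_2}\bigr)$.

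I expect this bookkeeping to be the main obstacle: the three $\delta$-terms must be matched correctly under the variable change $z_2\mapsto -z_2$ on non-integral powers $\{z\}$. A cleaner fallback is to prove the equivalent weak cocommutativity and weak coassociativity in the style of Proposition \ref{WCM} for $\Omega(\mathcal{W})$, each of which follows directly from the corresponding property of $\mathcal{W}$ after the substitutions above.

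\emph{Equality of fusion rules.} Compute
\[
\Omega(\Omega(\mathcal{W}))(z)=\tau\tau\circ\mathcal{W}(z)\circ e^{-zL_V(1)}e^{zL_V(1)}=\mathcal{W}(z).
\]
This uses the fact that $\tau$ commutes with the scalar substitution, and that the choice of branch for $(-1)^{t}$ is made consistently so the two sign changes cancel. Thus $\Omega$ is a linear bijection $I_V\tbinom{\mathcal{M}^1~~\mathcal{M}^2}{\mathcal{M}^3}\to I_V\tbinom{\mathcal{M}^2~~\mathcal{M}^1}{\mathcal{M}^3}$, and so $N_{\mathcal{M}^3}^{\mathcal{M}^1,\mathcal{M}^2}=N_{\mathcal{M}^3}^{\mathcal{M}^2,\mathcal{M}^1}$.
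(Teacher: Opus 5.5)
\emph{Overall.} Your strategy is the paper's: get a companion $L_V(1)$-identity from a residue of (\ref{JI4.1}), prove the Jacobi identity by precomposing with $e^{z_2L_V(1)}$ and pushing it through $\Yup_{\mathcal{M}^3}$ via Proposition \ref{Prop2.15}, recognize (\ref{JI4.1}) after a change of variables, then use $\Omega^2=\Id$. However, the concrete formulas you commit to in both substantive steps are wrong, so as written neither step goes through.

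\emph{Derivation.} Since $(\rho\otimes\Id)\circ\Yup_{\mathcal{M}}(z)=\sum_kL_V(k)z^{k-2}$, the operator $\Res_{z_0}\Res_{z_1}z_1^{2}(\rho\otimes\Id\otimes\Id)$ extracts $L_V(-1)$; that is its role in Lemma \ref{Lm3.4}. Applied here it yields $\mathcal{W}(z_2)L_V(-1)-(\Id\otimes L_V(-1))\mathcal{W}(z_2)=\bigl((L_V(-1)+2z_2L_V(0)+z_2^2L_V(1))\otimes\Id\bigr)\mathcal{W}(z_2)$, which does not give your companion identity. You need $\Res_{z_0}\Res_{z_1}(\rho\otimes\Id\otimes\Id)$ with no power of $z_1$. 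It gives exactly $\mathcal{W}(z_2)L_V(1)-(\Id\otimes L_V(1))\mathcal{W}(z_2)=(L_V(1)\otimes\Id)\mathcal{W}(z_2)$, with no extra terms, and (ii) turns the right side into $\frac{d}{dz_2}\mathcal{W}(z_2)$. With that fix, your cancellation is the paper's computation.

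\emph{Jacobi identity: the two errors.} First, Proposition \ref{Prop2.15} gives $(\Id\otimes e^{z_2L_V(1)})\Yup_{\mathcal{M}^3}(z_1)=\Yup_{\mathcal{M}^3}(z_1-z_2)e^{z_2L_V(1)}$, not $\Yup_{\mathcal{M}^3}(z_1+z_2)$. The sign matters, because only $\Yup_{\mathcal{M}^3}(z_1-z_2)$ can be traded for $\Yup_{\mathcal{M}^3}(z_0)$ against the factor $z_0^{-1}\delta\bigl(\frac{z_1-z_2}{z_0}\bigr)$. Second, your substitution $(z_0,z_1,z_2)\mapsto(-z_0,z_1-z_2,-z_2)$ does not produce the target. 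The first $\delta$-factor becomes $(-z_0)^{-1}\delta\bigl(\frac{z_1}{-z_0}\bigr)$, and the $\Yup_{\mathcal{M}^1}$-term gets argument $-z_0$ instead of $z_1$.

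\emph{Jacobi identity: what works.} The three terms for $\Omega(\mathcal{W})$ are $\Id\otimes\tau$ composed with, respectively, $(\Id\otimes\mathcal{W}(-z_2))\Yup_{\mathcal{M}^3}(z_1-z_2)$, $(\Yup_{\mathcal{M}^1}(z_1)\otimes\Id)\mathcal{W}(-z_2)$ and $(\tau\otimes\Id)(\Id\otimes\Yup_{\mathcal{M}^2}(z_0))\mathcal{W}(-z_2)$, each followed by $e^{z_2L_V(1)}$. Strip these invertible outer factors. Then rewrite $z_0^{-1}\delta\bigl(\frac{z_1-z_2}{z_0}\bigr)\Yup_{\mathcal{M}^3}(z_1-z_2)=z_1^{-1}\delta\bigl(\frac{z_0+z_2}{z_1}\bigr)\Yup_{\mathcal{M}^3}(z_0)$. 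What remains is exactly (\ref{JI4.1}) with $(z_0,z_1,z_2)\mapsto(z_1,z_0,-z_2)$. So the second and third terms trade places, not the first and third. The $\Yup_{\mathcal{M}^1}$-term needs only the flip identity $(\tau\otimes\Id)(\Id\otimes\Yup_{\mathcal{M}^1}(z_1))\tau=(\Id\otimes\tau)(\Yup_{\mathcal{M}^1}(z_1)\otimes\Id)$, not $(e^{z_0L_V(1)}\otimes\Id)\Yup_{\mathcal{M}}(z)=\Yup_{\mathcal{M}}(z+z_0)$.

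\emph{Fallback.} Your fallback is not free. Proposition \ref{WCM} is stated only for weak comodules with integral powers, not for cointertwining operators with $\{z\}$-coefficients, so you would have to prove that equivalence yourself.

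\emph{Fusion rules.} With one fixed branch of $(-1)^{t}$, the two sign changes do not cancel for non-integral $t$. So $\Omega^2=\Id$ really requires opposite branch choices. Nothing is lost, though: $\Omega$ is injective in both directions, since $\tau$ and $e^{zL_V(1)}$ are invertible and $L_V(1)$ is locally nilpotent. That already gives $N_{\mathcal{M}^3}^{\mathcal{M}^1,\mathcal{M}^2}=N_{\mathcal{M}^3}^{\mathcal{M}^2,\mathcal{M}^1}$.
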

\proof Applying $\Res_{z_0,z_1}\rho\otimes\Id\otimes\Id$ to Jacobi identity \ref{JI4.1}, we get
$$\mathcal {W}(z_2)\circ L_V(1)- \Id\otimes L_V(1)\circ\mathcal {W}(z_2)=L_V(1)\otimes\Id\circ\mathcal {W}(z_2).$$
Now, we have
 \begin{eqnarray*}
 &&\frac{d}{dz}\Omega(\mathcal {W})(z)\\
 &=&\tau\circ\frac{d}{dz}\mathcal {W}(-z)\circ e^{zL_V(1)}+\tau\circ\mathcal {W}(-z)\circ L_V(1)e^{zL_V(1)}\\
 &=&-\tau\circ L_V(1)\otimes\Id\circ\mathcal {W}(-z)\circ e^{zL_V(1)}+\tau\circ\mathcal {W}(-z)\circ L_V(1)e^{zL_V(1)}\\
 &=&\tau\circ\Id\otimes L_V(1)\circ\mathcal {W}(-z)\circ e^{zL_V(1)}\\
 &=&L_V(1)\otimes\Id\circ\tau\circ\mathcal {W}(-z)\circ e^{zL_V(1)}\\
 &=&L_V(1)\otimes\Id\circ\Omega(\mathcal {W})(z).
  \end{eqnarray*}
Thus we prove the derivation property.

Now, we show the Jacobi identity. It is enough to prove
  \begin{eqnarray*}
      &&z_0^{-1}\delta(\frac{z_1-z_2}{z_0})\Id\otimes \Omega(\mathcal {W}) (z_2)\circ \Yup_{\mathcal {M}^3} (z_1)\\
      &&~~~~~~~~~~-z_0^{-1}\delta(\frac{z_2-z_1}{-z_0})\tau\otimes\Id \circ\Id\otimes \Yup_{\mathcal {M}^1} (z_1)\circ \Omega(\mathcal {W}) (z_2)\\
      &&=z_1^{-1}\delta(\frac{z_2+z_0}{z_1})\Yup_{\mathcal {M}^2} (z_0)\otimes \Id \circ \Omega(\mathcal {W}) (z_2).
  \end{eqnarray*}
First, by definitions of $\Omega$ and $\tau$, and Proposition \ref{Prop2.15}, we have
\begin{eqnarray*}
\Id\otimes \Omega(\mathcal {W}) (z_2)\circ \Yup_{\mathcal {M}^3} (z_1)&=&\Id\otimes\tau\circ\Id\otimes\mathcal {W}(-z_2)\circ \Id\otimes e^{z_2L_V(1)}\circ \Yup_{\mathcal {M}^3} (z_1)\\
&=&\Id\otimes\tau\circ\Id\otimes\mathcal {W}(-z_2)\circ  \Yup_{\mathcal {M}^3} (z_1-z_2)\circ e^{z_2L_V(1)},\\
\tau\otimes\Id \circ\Id\otimes \Yup_{\mathcal {M}^1} (z_1)\circ \Omega(\mathcal {W}) (z_2)&=&\tau\otimes\Id \circ\Id\otimes \Yup_{\mathcal {M}^1} (z_1)\circ\tau\circ\mathcal {W}(-z_2)\circ e^{z_2L_V(1)}\\
&=&\Id\otimes\tau\circ\Yup_{\mathcal {M}^1} (z_1)\otimes\Id\circ\mathcal {W}(-z_2)\circ e^{z_2L_V(1)},\\
\Yup_{\mathcal {M}^2} (z_0)\otimes \Id \circ \Omega(\mathcal {W}) (z_2)&=&\Yup_{\mathcal {M}^2} (z_0)\otimes \Id \circ \tau\circ\mathcal {W}(-z_2)\circ e^{z_2L_V(1)}\\
&=&\Id\otimes\tau\circ\tau\otimes\Id\circ\Id\otimes\Yup_{\mathcal {M}^2} (z_0)\circ\mathcal {W}(-z_2)\circ e^{z_2L_V(1)}.
\end{eqnarray*}
Hence, we just need to prove
  \begin{eqnarray*}
      &&z_0^{-1}\delta(\frac{z_1-z_2}{z_0})\Id\otimes\mathcal {W}(-z_2)\circ  \Yup_{\mathcal {M}^3} (z_1-z_2)\\
      &&~~~~~~~~~~-z_0^{-1}\delta(\frac{z_2-z_1}{-z_0})\Yup_{\mathcal {M}^1} (z_1)\otimes\Id\circ\mathcal {W}(-z_2)\\
      &&=z_1^{-1}\delta(\frac{z_2+z_0}{z_1})\tau\otimes\Id\circ\Id\otimes\Yup_{\mathcal {M}^2} (z_0)\circ\mathcal {W}(-z_2).
  \end{eqnarray*}
From properties of $\delta$-function, this is equivalent to show
  \begin{eqnarray*}
      &&z_1^{-1}\delta(\frac{z_0+z_2}{z_1})\Id\otimes\mathcal {W}(-z_2)\circ  \Yup_{\mathcal {M}^3} (z_0)\\
      &&~~~~~~~~~~-z_1^{-1}\delta(\frac{-z_2-z_0}{-z_1})\tau\otimes\Id\circ\Id\otimes\Yup_{\mathcal {M}^2} (z_0)\circ\mathcal {W}(-z_2)\\
      &&=z_0^{-1}\delta(\frac{-z_2+z_1}{z_0})\Yup_{\mathcal {M}^1} (z_1)\otimes\Id\circ\mathcal {W}(-z_2).
  \end{eqnarray*}
But this identity is obvious since $\mathcal {W}(z)$ is a cointertwining operator of type $\tbinom{\mathcal {M}^1~~\mathcal {M}^2}{\mathcal {M}^3}$. Thus $\Omega(\mathcal {W})$ is a cointertwining operator of type $\tbinom{\mathcal {M}^2~~\mathcal {M}^1}{\mathcal {M}^3}$.

The second statement is trivial since $\Omega^2=\Id.$                                $\hfill\Box$

\begin{Proposition}\label{Prop4.4}
Let $\mathcal {W}(z)$ be a cointertwining operator of type $\tbinom{\mathcal {M}^1~~\mathcal {M}^2}{\mathcal {M}^3}$. Define $\mathcal {A}(\mathcal {W})$ by $$(\mathcal {A}(\mathcal {W})(z)m^{2*},m^{1*}\otimes m^{3})=(m^{1*}\otimes m^{2*},(-z^{-2})^{L_V(0)}e^{zL_V(-1)}\otimes \Id\circ\mathcal {W}(z^{-1})m^3).$$ Then, $\mathcal {A}(\mathcal {W})$ is a cointertwining operator of type $\tbinom{\mathcal {M}^1~~{\mathcal {M}^3}'}{{\mathcal {M}^2}'}$. Furthermore, we have$$N_{\mathcal {M}^3}^{\mathcal {M}^1,\mathcal {M}^2}=N_{{\mathcal {M}^2}'}^{\mathcal {M}^1,{\mathcal {M}^3}'}.$$
\end{Proposition}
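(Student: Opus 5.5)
The plan is to mimic the proof of Theorem \ref{Thm3.6}: there the same twisted pairing with $(-z^{-2})^{L_V(0)}e^{zL_V(-1)}$ turned $\Yup_\mathcal {M}$ into $\Yup'_{\mathcal {M}'}$. Here the role of $\Yup_\mathcal {M}$ is played by $\mathcal {W}$, and the extra tensor factor $\mathcal {M}^1$ is carried along passively. First, $\mathcal {A}(\mathcal {W})(z)m^{2*}$ is a well-defined element of $\mathcal {M}^1\otimes{\mathcal {M}^3}'\{z\}$. Since $e^{zL_V(-1)}$ and $(-z^{-2})^{L_V(0)}$ act on the $\mathcal {M}^1$-factor, this uses the Virasoro action of $L_V(k)$ on $\mathcal {M}^1$ from Definition \ref{Def2.10}(iv). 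The truncation condition (i) is checked exactly as in part (iii) of the proof of Theorem \ref{Thm3.6}. For fixed $m^{2*}$ of fixed weight, $m^3$ of fixed weight and $m^{1*}$ ranging over a graded piece, only finitely many powers contribute. The lower truncation of $\mathcal {M}^1$ and the truncation of $\mathcal {W}$ make the real parts of the exponents bounded below.

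Next comes the $L_V(1)$-derivation property. I differentiate the defining pairing in $z$, as in part (iv) of Theorem \ref{Thm3.6}. The derivative of $\mathcal {W}(z^{-1})$ is rewritten by condition (ii) of the definition of cointertwining operator as $-z^{-2}L_V(1)\otimes\Id\circ\mathcal {W}(z^{-1})$. Then Lemma \ref{Lm3.2} commutes $L_V(1)$ past $e^{zL_V(-1)}$. Lemma \ref{Lm3.2} is purely a Virasoro computation, so it holds on $\mathcal {M}^1$ as well. The same cancellation as before leaves $L_V(1)(-z^{-2})^{L_V(0)}e^{zL_V(-1)}$. Using Corollary \ref{Coro2.13}, this gives $\frac{d}{dz}\mathcal {A}(\mathcal {W})(z)=L_V(1)\otimes\Id\circ\mathcal {A}(\mathcal {W})(z)$.

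The main step is the Jacobi identity (\ref{JI4.1}) for type $\tbinom{\mathcal {M}^1~~{\mathcal {M}^3}'}{{\mathcal {M}^2}'}$. I pair both sides with $f\otimes m^{1*}\otimes m^3$. Each of the three terms is then expressed through $\mathcal {W}$, $\Yup_{\mathcal {M}^3}$, $\Yup_{\mathcal {M}^2}$ and $\Yup_{\mathcal {M}^1}$ at inverted variables $z_1^{-1},z_2^{-1}$, with twisting operators on the outer factors, as in part (v) of Theorem \ref{Thm3.6}.
\begin{itemize}
\item The term $(\Id\otimes\mathcal {A}(\mathcal {W})(z_2))\circ\Yup'_{{\mathcal {M}^2}'}(z_1)$ unwinds to a twisted form of $\Yup_{\mathcal {M}^2}(z_1^{-1})$ composed after $\mathcal {W}(z_2^{-1})$.
\item The $\tau$-term with $\Yup'_{{\mathcal {M}^3}'}$ becomes $\Id\otimes\mathcal {W}(z_2^{-1})\circ\Yup_{\mathcal {M}^3}(z_1^{-1})$.
\end{itemize}
So the original Jacobi identity for $\mathcal {W}$ must be used with variables $(z_1^{-1},z_2^{-1},-z_0/z_1z_2)$. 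Its first two terms swap roles, and a roles-swapped version of it is also what Proposition \ref{Prop4.3} used. After this substitution the $\delta$-function manipulations are the same as in Theorem \ref{Thm3.6}. What remains is the term $\Yup_{\mathcal {M}^1}(z_0)\otimes\Id\circ\mathcal {A}(\mathcal {W})(z_2)$. Matching it needs the conjugation formula
\[\Id\otimes e^{-z_2L_V(-1)}(-z_2^{2})^{L_V(0)}\circ\Yup_{\mathcal {M}^1}(z_0)\circ(-z_2^{-2})^{L_V(0)}e^{z_2L_V(-1)}=(-(z_2+z_0)^{-2})^{L_V(0)}e^{(z_2+z_0)L_V(-1)}\otimes\Id\circ\Yup_{\mathcal {M}^1}\Bigl(\tfrac{-z_0}{z_2(z_2+z_0)}\Bigr),\]
now on the comodule $\mathcal {M}^1$ instead of on $V$. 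The analogues of Lemmas \ref{Lm3.3}, \ref{Lm3.4} and \ref{Lm3.5} for $\Yup_{\mathcal {M}^1}$ hold with the same proofs. The $L_V(0)$-grading relation follows from admissibility. The $L_V(-1)$-commutator follows by applying $\Res_{z_0,z_1}z_1^2\rho\otimes\Id\otimes\Id$ to (\ref{JIM3.4}), together with Proposition \ref{Prop2.15}. The exponential identity is the FHL identity on $V'$, as before. I expect this bookkeeping, keeping track of which twist acts on which factor after the $\tau$'s, to be the main obstacle.

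Finally, $\mathcal {A}$ is linear. Applying the construction twice and using Proposition \ref{Prop3.7} (${\mathcal {M}^i}''\cong\mathcal {M}^i$) gives a cointertwining operator of the original type. The computation in the proof of Proposition \ref{Prop3.7} shows that the composite twisting $e^{z^{-1}L(1)}(-z^{2})^{L(0)}e^{zL(1)}(-z^{-2})^{L(0)}$ is the identity, so $\mathcal {A}^2=\Id$. Hence $\mathcal {A}$ is a linear isomorphism $I_V\tbinom{\mathcal {M}^1~~\mathcal {M}^2}{\mathcal {M}^3}\cong I_V\tbinom{\mathcal {M}^1~~{\mathcal {M}^3}'}{{\mathcal {M}^2}'}$, which gives the equality of fusion rules. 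Here homogeneous subspaces are assumed finite dimensional, as in Theorem \ref{Thm3.6}.
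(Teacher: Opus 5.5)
Your outline is the natural expansion of the paper's one-line proof (``similar to Theorem~\ref{Thm3.6} and Proposition~\ref{Prop3.7}''), and the derivation and Jacobi steps go through along the lines you describe, with the caveats below. The step that fails as stated is the last one, $\mathcal{A}^2=\Id$.

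In Theorem~\ref{Thm3.6} and Proposition~\ref{Prop3.7} the twist $T_z=(-z^{-2})^{L_V(0)}e^{zL_V(-1)}$ acts on $V$, where $L_V(0)$ has integral eigenvalues. Here it acts on $\mathcal{M}^1$, whose $L_V(0)$-eigenvalues need not be integers. This happens, for instance, for comodules dual to $V'$-modules of non-integral conformal weight, which typically occur in the corational setting of Sections 5--6. So $(-1)^{L_V(0)}$ already requires a branch, say $(-1)^{\lambda}=e^{\pi i\lambda}$. Now redo the computation of Proposition~\ref{Prop3.7} on $(\mathcal{M}^1)'$. Conjugating $e^{zL(1)}$ by $(-z^{2})^{L(0)}$ gives $e^{-z^{-1}L(1)}$, so the exponentials cancel. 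What remains is $(-z^{2})^{L(0)}(-z^{-2})^{L(0)}$, which equals $e^{2\pi i\lambda}$ on the $\lambda$-eigenspace, not $1$. Hence
\[
\mathcal{A}^2(\mathcal{W})=(e^{2\pi iL_V(0)}\otimes\Id)\circ\mathcal{W}.
\]
This is the analogue of the fact that Frenkel--Huang--Lepowsky's $A_r$ is inverted by $A_{-r-1}$, not by itself.

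The fusion-rule equality still holds, but needs a different argument:
\begin{itemize}
\item Since $V$ is $\mathbb{Z}$-graded, $\Delta_{\mathcal{M}^1,k}$ shifts $L_V(0)$-eigenvalues by integers. So $e^{2\pi iL_V(0)}$ is a comodule automorphism of $\mathcal{M}^1$.
\item For any comodule endomorphism $\phi$ of $\mathcal{M}^1$, $(\phi\otimes\Id)\circ\mathcal{W}$ is again a cointertwining operator, because $\phi$ commutes with $L_V(1)$ and with the coaction.
\item Thus $\mathcal{A}\circ\mathcal{A}$ is bijective on both spaces of cointertwining operators, and therefore $\mathcal{A}$ is a bijection.
\end{itemize}
Alternatively, define the inverse map using the opposite branch $e^{-\pi iL_V(0)}$.

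The same slip, treating $L_V(0)$ on the twisted factor as an integral grading, weakens two earlier justifications and one formula.
\begin{itemize}
\item Your analogue of Lemma~\ref{Lm3.3} needs $\Delta_{\mathcal{M}^1,k}\circ L_V(0)=(L_V(0)\otimes\Id+\Id\otimes L_V(0)-k-1)\circ\Delta_{\mathcal{M}^1,k}$. This does not follow from admissibility, which constrains the $\mathbb{N}$-grading, not $L_V(0)$. It comes from the Jacobi identity: apply $\Res_{z_0}\Res_{z_1}z_1\,\rho\otimes\Id\otimes\Id$, or dualize $[L(0),Y(f,z)]=Y(L(0)f,z)+zY(L(-1)f,z)$ on $(\mathcal{M}^1)'$.
\item Truncation is not a copy of part (iii) of Theorem~\ref{Thm3.6}. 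The cointertwining axioms impose no grading condition. For $m^{1*},m^{2*},m^3$ with $L_V(0)$-eigenvalues $\lambda_1,\lambda_2,\mu_3$, the power of $z$ picked out is $\lambda_2-\lambda_1-\mu_3$; the upper bound comes from the factor $z^{-2L_V(0)}$ on $\mathcal{M}^1$. So you need the $L_V(0)$-degree property of $\mathcal{W}$, again from the Jacobi identity. You also need control of $L_V(0)$ on $\mathcal{M}^1$ and $\mathcal{M}^3$, e.g.\ semisimple with eigenvalues in finitely many cosets $h+\mathbb{N}$ and finite-dimensional eigenspaces, as for finite sums of simple comodules.
\item With a branch fixed, $(-z_2^{2})^{L_V(0)}$ is not the inverse of $(-z_2^{-2})^{L_V(0)}$ on $\mathcal{M}^1$. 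So state your conjugation formula unconjugated, as $\Yup_{\mathcal{M}^1}(z_0)\circ T_{z_2}=(T_{z_2+z_0}\otimes T_{z_2})\circ\Yup_{\mathcal{M}^1}\bigl(\tfrac{-z_0}{z_2(z_2+z_0)}\bigr)$. This form is exactly what the third term of the Jacobi identity requires.
\end{itemize}
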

\proof This is similar to the proof of Theorem \ref{Thm3.6} and Proposition \ref{Prop3.7}. So we omit it.                                                  $\hfill\Box$

\section{Cotensor decomposition}
In this section, we study the cotensor product of admissible comodules for a graded
vertex operator coalgebra. Throughout this section, let $V$ be a graded vertex operator
coalgebra and let $\mathcal{C}$ be the category of admissible $V$-comodules. We assume
that $V$ is simple and corational, then from \cite{W}, we know  $\mathcal{C}$ is semisimple with only finitely many isomorphism classes of simple
objects. We also assume that every object of $\mathcal{C}$ has finite-dimensional homogeneous subspaces and
that all fusion rules defined by cointertwining operators are
finite.

Let $K[\mathcal{C}]$ be the Grothendieck group of $\mathcal{C}$ and set
\[
\mathbb{K}[\mathcal{C}]=\mathbb{C}\otimes_{\mathbb{Z}}K[\mathcal{C}].
\]
For an admissible $V$-comodule $\mathcal {M}$, we denote its class in $\mathbb{K}[\mathcal{C}]$ by
$[\mathcal {M}]$.

\begin{Definition}\label{Prop5.2}
Let $\mathcal {M}$ be an admissible $V$-comodule. A cotensor decomposition of $\mathcal {M}$ is a formal object
\[
\Delta_{\boxtimesc}(\mathcal {M})
=
\bigoplus_{\alpha\in A} \mathcal {M}_\alpha^{(1)}\boxtimes \mathcal {M}_\alpha^{(2)},
\]
where $A$ is an index set,
together with cointertwining operators
\[
\mathcal {W}_\alpha(z)\in \mathcal{I}_V
\binom{\mathcal {M}_\alpha^{(1)}\quad \mathcal {M}_\alpha^{(2)}}{\mathcal {M}},
\]
satisfying the following universal property: for any admissible $V$-comodules $\mathcal {N}^1,\mathcal {N}^2$,
and cointertwining operator
$
\mathcal {W}^1(z)\in \mathcal{I}_V
\binom{\mathcal {N}^1\quad \mathcal {N}^2}{\mathcal {M}},
$
there exists a unique $\alpha$ and $V$-comodule homomorphisms
\[
\varphi_1:\mathcal {M}_\alpha^{(1)}\rightarrow \mathcal {N}^1,\qquad
\varphi_2:\mathcal {M}_\alpha^{(2)}\rightarrow \mathcal {N}^2,
\]
such that
\[
\mathcal {W}^1(z)=(\varphi_1\otimes \varphi_2)\circ \mathcal {W}_\alpha(z).
\]
\end{Definition}

\begin{Proposition}
Assume
$
\{\mathcal {M}^i\}_{i\in I}.
$
is a complete set of representatives of irreducible admissible $V$-comodules.
Let $\mathcal {M}$ be an admissible $V$-comodule. Suppose that the cotensor decomposition of $\mathcal {M}$
exists. Then,
we have
\[
\Delta_{\boxtimesc}(\mathcal {M})
=
\sum_{i,j\in I}
N^{\mathcal {M}^i,\mathcal {M}^j}_{\mathcal {M}}
\mathcal {M}^i\boxtimes \mathcal {M}^j.
\]
\end{Proposition}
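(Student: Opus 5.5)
The plan is to read off the multiplicities in the cotensor decomposition from the universal property, using semisimplicity of $\mathcal{C}$ and Schur's lemma to reduce everything to simple comodules. The formula is interpreted at the level of classes: the terms $\mathcal{M}^{(1)}_\alpha\boxtimes\mathcal{M}^{(2)}_\alpha$ with isomorphic pairs are grouped, and we show that the pair $(\mathcal{M}^i,\mathcal{M}^j)$ occurs exactly $N^{\mathcal{M}^i,\mathcal{M}^j}_{\mathcal{M}}$ times.

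\textbf{Step 1: reduce to simple factors.} Since $V$ is corational, each $\mathcal{M}^{(1)}_\alpha$ and $\mathcal{M}^{(2)}_\alpha$ is a direct sum of simple comodules. Cointertwining operators are compatible with direct sums in the target:
\[
I_V\tbinom{\mathcal{N}^1\oplus\mathcal{N}^{1\prime}~~\mathcal{N}^2}{\mathcal{M}}\cong I_V\tbinom{\mathcal{N}^1~~\mathcal{N}^2}{\mathcal{M}}\oplus I_V\tbinom{\mathcal{N}^{1\prime}~~\mathcal{N}^2}{\mathcal{M}},
\]
obtained by composing with the projections and inclusions, which are comodule homomorphisms. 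The same holds in the second slot. Using this, each summand $\mathcal{M}^{(1)}_\alpha\boxtimes\mathcal{M}^{(2)}_\alpha$ splits into a formal sum of terms $\mathcal{M}^i\boxtimes\mathcal{M}^j$. We should check that the uniqueness in the universal property forces each resulting operator to be indexed separately. So we may assume every $\mathcal{M}^{(1)}_\alpha\cong\mathcal{M}^{i_\alpha}$ and $\mathcal{M}^{(2)}_\alpha\cong\mathcal{M}^{j_\alpha}$ are simple.

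\textbf{Step 2: Schur's lemma.} For simple admissible comodules, $\mathrm{Hom}(\mathcal{M}^i,\mathcal{M}^k)$ is $0$ when $i\ne k$. When $i=k$ it is $\mathbb{C}\,\Id$, because a nonzero homomorphism has kernel and image that are admissible subcomodules, and the homogeneous spaces are finite dimensional, so an eigenvalue argument on a homogeneous component applies.

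\textbf{Step 3: identify the multiplicities.} Fix $i,j$ and let $A_{ij}=\{\alpha : i_\alpha=i,\ j_\alpha=j\}$. Apply the universal property with $\mathcal{N}^1=\mathcal{M}^i$ and $\mathcal{N}^2=\mathcal{M}^j$. Every $\mathcal{W}^1\in I_V\tbinom{\mathcal{M}^i~~\mathcal{M}^j}{\mathcal{M}}$ factors through some $\mathcal{W}_\alpha$, and by Step 2 this $\alpha$ must lie in $A_{ij}$ (or $\mathcal{W}^1=0$). Then $\varphi_1\otimes\varphi_2$ is a scalar, so $\mathcal{W}^1=\lambda\mathcal{W}_\alpha$. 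Consequently the operators $\{\mathcal{W}_\alpha\}_{\alpha\in A_{ij}}$ span $I_V\tbinom{\mathcal{M}^i~~\mathcal{M}^j}{\mathcal{M}}$.

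It remains to show they are linearly independent, so that $|A_{ij}|=N^{\mathcal{M}^i,\mathcal{M}^j}_{\mathcal{M}}$. This is the main obstacle. The universal property as stated asks for a unique $\alpha$, and we read it as saying that distinct $\alpha$ give independent directions: a sum $\sum_{\alpha}\lambda_\alpha\mathcal{W}_\alpha$ with several nonzero coefficients would have to factor through a single $\mathcal{W}_\beta$, forcing it to be proportional to $\mathcal{W}_\beta$. If the operators were dependent, some $\mathcal{W}_\alpha$ would factor through two different indices, which contradicts uniqueness. I would make this precise by choosing a basis of $I_V\tbinom{\mathcal{M}^i~~\mathcal{M}^j}{\mathcal{M}}$ (finite dimensional by hypothesis) and matching it with $A_{ij}$. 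Summing over $i,j\in I$ then gives the stated formula.
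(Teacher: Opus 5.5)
\textbf{Gap in Step 1.} Splitting $\mathcal{M}^{(1)}_\alpha$ and $\mathcal{M}^{(2)}_\alpha$ into simple summands, and replacing $\mathcal{W}_\alpha$ by its components, does \emph{not} give a family that keeps the factorization property. That property requires each cointertwining operator to factor through a \emph{single} $\mathcal{W}_\alpha$.

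Compare how the paper handles a pair $(a,b)$ with $N=N^{\mathcal{M}^a,\mathcal{M}^b}_{\mathcal{M}}\ge 2$. The paper argues in the other direction: it exhibits a candidate and checks the factorization property. In its candidate, the $(a,b)$-summand has first factor $\bigoplus_{s=1}^{N}\mathcal{M}^a$, second factor $\mathcal{M}^b$, and operator $\sum_s(\iota_s\otimes\Id)\circ\mathcal{W}^{a,b}_s$, where $\mathcal{W}^{a,b}_s$ is a basis. An arbitrary $\mathcal{W}=\sum_s c_s\mathcal{W}^{a,b}_s$ then factors through this summand via $\varphi_1=\sum_s c_s\pi_s$ and $\varphi_2=\Id$.

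After your splitting, the only operators into $\mathcal{M}^a\otimes\mathcal{M}^b$ are $\mathcal{W}^{a,b}_1,\dots,\mathcal{W}^{a,b}_N$. By your own Step 2, anything factoring through one of them is a scalar multiple of it. So $\mathcal{W}^{a,b}_1+\mathcal{W}^{a,b}_2$ factors through none of them. Your sentence ``we should check that the uniqueness \dots\ forces each resulting operator to be indexed separately'' hides exactly the claim that is false.

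\textbf{Consequence for Step 3.} Step 3 therefore cannot deliver $|A_{ij}|=N^{\mathcal{M}^i,\mathcal{M}^j}_{\mathcal{M}}$. What you correctly derive there is that every operator of type $\binom{\mathcal{M}^i~\mathcal{M}^j}{\mathcal{M}}$ equals $\lambda\mathcal{W}_\alpha$ for one $\alpha\in A_{ij}$. That says the fusion space is a union of the lines $\mathbb{C}\mathcal{W}_\alpha$.
\begin{itemize}
\item If $A_{ij}$ is finite, this forces the dimension to be at most $1$. So for $N\ge 2$ the simple-factor object produced by your Step 1 cannot exist at all.
\item If $A_{ij}$ may be infinite, take one representative on each line of the fusion space. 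This family satisfies the factorization property for simple test objects, with a unique index, yet $|A_{ij}|=\infty$.
\end{itemize}
So uniqueness of $\alpha$ does not yield linear independence. Also, linear dependence produces a double factorization only for proportional pairs: $\mathcal{W}_3=\mathcal{W}_1+\mathcal{W}_2$ factors through index $3$ alone.

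As written, your argument covers only the case where every fusion rule is $0$ or $1$.

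\textbf{Missing idea.} A multiplicity $N\ge 2$ has to sit inside one summand with a non-simple factor, such as $(\mathcal{M}^i)^{\oplus N}\boxtimes\mathcal{M}^j$. The coefficient of $[\mathcal{M}^i]\otimes[\mathcal{M}^j]$ must be read off from multiplicities inside the factors, not by splitting them. The paper sidesteps this counting by verifying the universal property directly for that explicit candidate.
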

\proof We need to prove that
\[
\bigoplus_{i,j\in I}
N^{\mathcal {M}^i,\mathcal {M}^j}_{\mathcal {M}}
\mathcal {M}^i\boxtimes \mathcal {M}^j
\]
satisfies the universal property of the cotensor decomposition of $\mathcal {M}$.

For each $i,j\in I$, let
$
\mathcal{W}^{i,j}_s(z),
\qquad
s=1,\ldots,N^{\mathcal {M}^i,\mathcal {M}^j}_{\mathcal {M}},
$
be a basis of the vector space
$
\mathcal{I}_V
\binom{\mathcal {M}^i,\mathcal {M}^j}{\mathcal {M}}.
$
Define the universal cointertwining operator by
\[
\mathcal{W}_\mathcal {M}(z)
=
\sum_{i,j\in I}
\sum_{s=1}^{N^{\mathcal {M}^i,\mathcal {M}^j}_{\mathcal {M}}}
\mathcal{W}^{i,j}_s(z),
\]
where $\mathcal{W}^{a,b}_s(z)$ is regarded as a cointertwining operator from $\mathcal {M}$ to
the $s$-th copy of $\mathcal {M}^i\otimes \mathcal {M}^j\{z\}$ inside
$
N_\mathcal {M}^{\mathcal {M}^i,\mathcal {M}^j}M^a\boxtimes M^b.
$

Let $\mathcal {N}^1,\mathcal {N}^2$ be admissible $V$-comodules and let
$
\mathcal{W}(z)
\in
\mathcal{I}_V
\binom{\mathcal {N}^1\quad \mathcal {N}^2}{\mathcal {M}}
$
be any cointertwining operator. Since $V$ is corational, we can assume $\mathcal {N}^1,\mathcal {N}^2$ are simple. Let $\mathcal {N}^1=\mathcal {M}^a,\mathcal {N}^2=\mathcal {M}^b$ for some $a,b\in I.$
Then
$
\mathcal{W}(z)\in \mathcal{I}_V
\binom{\mathcal {M}^a\quad \mathcal {M}^b}{\mathcal {M}}.
$
Therefore, there exist unique scalars
$
c_s^{a,b}\in\mathbb{C},
s=1,\ldots,N^{\mathcal {M}^a,\mathcal {M}^b}_\mathcal {M},
$
such that
\[
 \mathcal{W}(z)
=
\sum_{s=1}^{N^{\mathcal {M}^a,\mathcal {M}^b}_\mathcal {M}}
c^{a,b}_{s}\mathcal{W}^{a,b}_s(z).
\]

Now define
\[
\varphi_1=\sum_{a,b\in I}c_s^{a,b}\Id:\bigoplus_{s=1}^{N_\mathcal {M}^{\mathcal {M}^a,\mathcal {M}^b}}\mathcal {M}^a\rightarrow\mathcal {M}^a,~\varphi_2=\Id:\mathcal {M}^b\rightarrow\mathcal {M}^b,
\]
we have
\[
\mathcal {W}(z)=(\varphi_1\otimes \varphi_2)\circ \mathcal {W}_\mathcal {M}(z).
\]
This completes the proof.                                         $\hfill\Box$

From Proposition \ref{Prop4.3}, we immediately have the following Corollary.
\begin{Corollary}
The cotensor decomposition $\Delta_{\boxtimesc}$ is cocommutative.
\end{Corollary}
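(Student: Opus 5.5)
To prove the Corollary, I would show that the flip $\tau$ on $\mathbb{K}[\mathcal{C}]\otimes\mathbb{K}[\mathcal{C}]$ fixes the coproduct. Concretely, $\tau\circ\Delta_{\boxtimesc}=\Delta_{\boxtimesc}$ on $\mathbb{K}[\mathcal{C}]$, where $\Delta_{\boxtimesc}$ is extended linearly from classes of comodules.

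First, by the preceding Proposition, whenever the cotensor decomposition of $\mathcal {M}$ exists we have
\[
\Delta_{\boxtimesc}([\mathcal {M}])=\sum_{i,j\in I}N^{\mathcal {M}^i,\mathcal {M}^j}_{\mathcal {M}}[\mathcal {M}^i]\boxtimes[\mathcal {M}^j].
\]
Applying $\tau$ and renaming $i\leftrightarrow j$ gives
\[
\tau\circ\Delta_{\boxtimesc}([\mathcal {M}])=\sum_{i,j\in I}N^{\mathcal {M}^j,\mathcal {M}^i}_{\mathcal {M}}[\mathcal {M}^i]\boxtimes[\mathcal {M}^j].
\]
It therefore suffices to show $N^{\mathcal {M}^j,\mathcal {M}^i}_{\mathcal {M}}=N^{\mathcal {M}^i,\mathcal {M}^j}_{\mathcal {M}}$ for all $i,j$.

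This is exactly the second statement of Proposition \ref{Prop4.3}, applied with $\mathcal {M}^3=\mathcal {M}$, $\mathcal {M}^1=\mathcal {M}^i$ and $\mathcal {M}^2=\mathcal {M}^j$. The underlying reason is that $\Omega(\mathcal {W})(z)=\tau\circ\mathcal {W}(-z)\circ e^{zL_V(1)}$ is a linear isomorphism
\[
I_V\tbinom{\mathcal {M}^i~~\mathcal {M}^j}{\mathcal {M}}\rightarrow I_V\tbinom{\mathcal {M}^j~~\mathcal {M}^i}{\mathcal {M}}.
\]
The analogous map in the other direction is its inverse: $\Omega^2=\Id$ because $\tau^2=\Id$ and the factors $e^{-zL_V(1)}e^{zL_V(1)}$ cancel once one uses the $L_V(1)$-commutator relation for $\mathcal {W}$ derived in that proof.

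The same argument can be phrased at the level of the universal property. Given the data $\{\mathcal {W}_\alpha\}$ for $\Delta_{\boxtimesc}(\mathcal {M})$, the family $\{\Omega(\mathcal {W}_\alpha)\}$ gives a cotensor decomposition with the two factors swapped, because $\Omega$ commutes with $\varphi_1\otimes\varphi_2$ up to swapping them to $\varphi_2\otimes\varphi_1$. This uses that comodule homomorphisms commute with $L_V(1)$, which follows by applying $\rho\otimes\Id$ to the homomorphism identity.

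Finally, both sides of $\tau\circ\Delta_{\boxtimesc}=\Delta_{\boxtimesc}$ are linear, and since $\mathcal{C}$ is semisimple the classes $[\mathcal {M}^k]$ span $\mathbb{K}[\mathcal{C}]$. So it is enough to check the identity on simple classes, where it is the equality of fusion rules above.

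The only point needing care is that $\Omega$ really is bijective with inverse $\Omega$, not just an injection. Even without that, applying Proposition \ref{Prop4.3} in both directions gives injections both ways between finite-dimensional spaces, so the dimensions are equal.
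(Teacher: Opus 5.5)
Your proposal is correct and follows the paper's route. The paper's proof is just the remark that the corollary follows from Proposition \ref{Prop4.3}: expand $\Delta_{\boxtimesc}$ in fusion rules and use $N^{\mathcal{M}^i,\mathcal{M}^j}_{\mathcal{M}}=N^{\mathcal{M}^j,\mathcal{M}^i}_{\mathcal{M}}$, which is what you do. Two of your side claims are slightly off but do not affect the argument. In $\Omega^2$ the exponentials are adjacent and cancel directly, with no commutator relation needed, though for non-integral powers of $z$ the substitution $z\mapsto -z$ needs a branch choice, so your fallback (injections both ways between finite-dimensional spaces) is the safer justification. In the universal-property version, $e^{zL_V(1)}$ acts on the common source, so you do not need comodule homomorphisms to commute with $L_V(1)$.
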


\begin{Lemma}\label{Lm5.4}
Let $V$ be a graded vertex operator coalgebra and $\mathcal {M}^i,\mathcal {M}^j,\mathcal {M}^k$ admissible $V$-comodules with finite-dimensional homogeneous subspaces. Then we have $$N^{\mathcal {M}^i,\mathcal {M}^j}_{\mathcal {M}^k}=N_{(\mathcal {M}^i)',(\mathcal {M}^j)'}^{(\mathcal {M}^k)'},$$
\end{Lemma}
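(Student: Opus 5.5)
The plan is to construct an explicit linear isomorphism
\[
I_V\binom{\mathcal {M}^i~~\mathcal {M}^j}{\mathcal {M}^k}\;\cong\; I_{V'}\binom{(\mathcal {M}^k)'}{(\mathcal {M}^i)'~~(\mathcal {M}^j)'}
\]
by graded dualization, in the same spirit as Proposition \ref{Prop2.12}. Here $(\mathcal {M}^i)'$ denotes the admissible $V'$-module of Proposition \ref{Prop2.12}(iv), not the contragredient comodule.

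Given a cointertwining operator $\mathcal {W}(z)$ of type $\binom{\mathcal {M}^i~~\mathcal {M}^j}{\mathcal {M}^k}$, I define $\mathcal {Y}_\mathcal {W}$ by
\[
(\mathcal {Y}_\mathcal {W}(m^{i*},z)m^{j*},m^k)=(m^{i*}\otimes m^{j*},\mathcal {W}(z)m^k),
\]
for $m^{i*}\in(\mathcal {M}^i)'$, $m^{j*}\in(\mathcal {M}^j)'$ and $m^k\in\mathcal {M}^k$. Conversely, given an intertwining operator $\mathcal {Y}$, the same formula read backwards defines $\mathcal {W}_\mathcal {Y}(z)m^k$ as an element of $\mathcal {M}^i\otimes\mathcal {M}^j\{z\}$. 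To make this well defined, I use that homogeneous subspaces are finite dimensional, so that $\mathcal {M}^i\otimes\mathcal {M}^j$ and the restricted dual of $(\mathcal {M}^i)'\otimes(\mathcal {M}^j)'$ agree degree by degree. Each coefficient of $z^{-t-1}$ is then a finite sum, by the grading argument of Theorem \ref{Thm3.6}(iii). The two assignments are mutually inverse and linear, so it suffices to check that they preserve the defining axioms.

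The axioms transfer as follows.
\begin{enumerate}
\item[(a)] Truncation: the condition $\Delta_t(m^k)=0$ for $\mathrm{Re}\,t\ll 0$ corresponds to the lower truncation $m^{i*}_{t+\lambda}m^{j*}=0$ for $t\gg0$, by pairing against a fixed $m^k$ and using the gradings.
\item[(b)] $L(-1)$-derivative: by Corollary \ref{Coro2.13}, $L_V(1)$ on $\mathcal {M}^i$ is adjoint to $L_{V'}(-1)$ on $(\mathcal {M}^i)'$. Hence $\frac{d}{dz}\mathcal {W}=(L_V(1)\otimes\Id)\circ\mathcal {W}$ dualizes exactly to $\frac{d}{dz}\mathcal {Y}(m^{i*},z)=\mathcal {Y}(L_{V'}(-1)m^{i*},z)$.
\item[(c)] Jacobi identity: pair the Jacobi identity (\ref{JI4.1}) against $f\otimes m^{i*}\otimes m^{j*}$ with $f\in V'$, and use the defining formulas of Proposition \ref{Prop2.12}(ii),(iv), namely $(Y_{(\mathcal {M})'}(f,z)m^*,m)=(f\otimes m^*,\Yup_\mathcal {M}(z)m)$. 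Term by term this gives the following correspondence:
\begin{itemize}
\item $(\Id\otimes\mathcal {W}(z_2))\circ\Yup_{\mathcal {M}^k}(z_1)$ becomes $Y_{(\mathcal {M}^k)'}(f,z_1)\mathcal {Y}(m^{i*},z_2)m^{j*}$;
\item $(\tau\otimes\Id)\circ(\Id\otimes\Yup_{\mathcal {M}^j}(z_1))\circ\mathcal {W}(z_2)$ becomes $\mathcal {Y}(m^{i*},z_2)Y_{(\mathcal {M}^j)'}(f,z_1)m^{j*}$, where $\tau$ realigns the $V$-factor with $f$;
\item $(\Yup_{\mathcal {M}^i}(z_0)\otimes\Id)\circ\mathcal {W}(z_2)$ becomes $\mathcal {Y}(Y_{(\mathcal {M}^i)'}(f,z_0)m^{i*},z_2)m^{j*}$.
\end{itemize}
The delta-function prefactors are identical to those in Definition \ref{Def2.4}(iii), up to the standard equivalence $z_1^{-1}\delta(\frac{z_2+z_0}{z_1})=z_2^{-1}\delta(\frac{z_1-z_0}{z_2})$. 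Thus the two Jacobi identities are equivalent after pairing.
\end{enumerate}
Since $f$, $m^{i*}$, $m^{j*}$ and $m^k$ range over spanning sets of the respective graded duals, which separate points because everything is finite dimensional degree-wise, each identity for $\mathcal {W}$ holds if and only if the corresponding identity for $\mathcal {Y}$ holds.

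I expect the main obstacle to be the bookkeeping in (c). One must make sure that the placement of $\tau$ and the order of tensor factors produce exactly the composition order $Y\mathcal {Y}$ versus $\mathcal {Y}Y$ required in Definition \ref{Def2.4}, and that the non-integral powers $z^{-t-1}$ with $t\in\mathbb{C}$ cause no convergence issue. For the latter I would argue coefficientwise: for fixed homogeneous vectors only finitely many $t$ in each coset $\lambda+\mathbb{Z}$ contribute, again by the grading. Once the bijection is established, taking dimensions yields $N^{\mathcal {M}^i,\mathcal {M}^j}_{\mathcal {M}^k}=N^{(\mathcal {M}^k)'}_{(\mathcal {M}^i)',(\mathcal {M}^j)'}$.
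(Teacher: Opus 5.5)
Your construction is the paper's. The paper defines $\mathcal W'$ by exactly your pairing, declares the axioms ``obvious'', and notes the inverse. Your step (b) is correct. So is your term-by-term Jacobi bookkeeping in (c): the role of $\tau$, $\delta(\frac{z_2-z_1}{-z_0})=\delta(\frac{-z_2+z_1}{z_0})$, and $z_1^{-1}\delta(\frac{z_2+z_0}{z_1})=z_2^{-1}\delta(\frac{z_1-z_0}{z_2})$ are all handled properly.

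The gap is in the well-definedness and truncation claims. Unlike $\Yup_{\mathcal M}$, neither a cointertwining operator nor an intertwining operator in Definition~\ref{Def2.4} carries a grading axiom. So the argument of Theorem~\ref{Thm3.6}(iii) is not available, and three things are left unproved:
\begin{enumerate}
\item[(1)] that $m^{i*}_tm^{j*}$, a priori only a functional on $\mathcal M^k$, lies in the graded dual $(\mathcal M^k)'$;
\item[(2)] that the element built from $\mathcal Y$, a priori in $\prod_{a,b}M^i(a)\otimes M^j(b)$, lies in $\mathcal M^i\otimes\mathcal M^j$;
\item[(3)] your step (a), which fails as stated because the two truncation axioms are not dual to each other. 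Pairing with a fixed $m^k$ gives $(m^{i*}_tm^{j*},m^k)=0$ only for $\mathrm{Re}\,t\ll0$. Definition~\ref{Def2.4}(i) instead demands $m^{i*}_{t+\lambda}m^{j*}=0$ for $t\gg0$, at the opposite end and uniformly in $m^k$. The reverse direction has the same mismatch.
\end{enumerate}

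What is missing is a weight property. Apply $\Res_{z_0}\Res_{z_1}z_1(\rho\otimes\Id\otimes\Id)$ to (\ref{JI4.1}) and use the derivative axiom to get
\[
\Delta_t\circ L_V(0)=\bigl(L_V(0)\otimes\Id+\Id\otimes L_V(0)-t-1\bigr)\circ\Delta_t .
\]
On the module side the analogue is $[L(0),\mathcal Y(w,z)]=\mathcal Y(L(0)w,z)+z\frac{d}{dz}\mathcal Y(w,z)$. Suppose in addition that $L_V(0)$ acts semisimply on the three comodules, with finite-dimensional eigenspaces and eigenvalues in finitely many cosets $h+\mathbb N$. Then $\Delta_t$ raises total weight by $t+1$, and all the finiteness, restricted-dual and truncation statements follow.

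That extra input does not follow from finite-dimensional homogeneous subspaces. It has to be supplied, e.g.\ from rationality of $V'$ together with the comodules being finite sums of simples, as in all the paper's applications. The paper's proof invokes rationality of $V'$; your proposal never uses it.

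Without this input your map is not even defined. Let $\mathcal M^k=\bigoplus_{n\ge0}\mathcal M^{(n)}$, where $\mathcal M^{(n)}$ is a simple comodule $\mathcal M^{(0)}$ with grading shifted up by $n$. Let $\Pi:\mathcal M^k\to\mathcal M^{(0)}$ add up the components. Then $\Yup_{\mathcal M^{(0)}}\circ\Pi$ is a cointertwining operator of type $\binom{V~~\mathcal M^{(0)}}{\mathcal M^k}$. Its dual coefficient $c_{-1}m^*$ is the functional $m^*\circ\Pi$. This functional is nonzero on infinitely many $M^k(s)$, so it does not lie in $(\mathcal M^k)'$.
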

\proof From Proposition \ref{Prop2.12} and \cite{W}, we know $V'$ is a rational vertex operator algebra and $(\mathcal {M}^i)',(\mathcal {M}^j)',(\mathcal {M}^k)'$ are admissible $V'$-modules. Let $\mathcal {W}(z)\in I_V\tbinom{\mathcal {M}^i~~\mathcal {M}^j}{\mathcal {M}^k}$. For any $m^{i*}\in(\mathcal {M}^i)',m^{j*}\in(\mathcal {M}^j)',m^{k}\in\mathcal {M}^k$, define$$\left(\mathcal {W}'(m^{i*},z)m^{j*},m^{k}\right)=\left(m^{i*}\otimes m^{j*},\mathcal {W}(z)m^{k}\right).$$
It is obvious that $\mathcal {W}'(\cdot,z)\in I_{V'}\tbinom{(\mathcal {M}^k)'}{(\mathcal {M}^i)'~~(\mathcal {M}^j)'}.$

Similarly, we have a linear map from $I_{V'}\tbinom{(\mathcal {M}^k)'}{(\mathcal {M}^i)'~~(\mathcal {M}^j)'}$ to $I_V\tbinom{\mathcal {M}^i~~\mathcal {M}^j}{\mathcal {M}^k}$, and above two procedure are inverse to each other.
                                $\hfill\Box$

\begin{Proposition}\label{Prop5.4}
The cotensor decomposition $\Delta_{\boxtimesc}$ is coassociative.
\end{Proposition}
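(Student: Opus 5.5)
Coassociativity will be proved at the level of the Grothendieck group $\mathbb{K}[\mathcal{C}]$, where $\Delta_{\boxtimesc}$ is given on the basis $\{[\mathcal{M}^k]\}_{k\in I}$ of simple classes by the previous Proposition:
\[
\Delta_{\boxtimesc}([\mathcal{M}^k])=\sum_{i,j\in I}N^{\mathcal{M}^i,\mathcal{M}^j}_{\mathcal{M}^k}[\mathcal{M}^i]\otimes[\mathcal{M}^j],
\]
extended linearly. Since $\mathcal{C}$ is semisimple, every class is a finite sum of simple classes. It therefore suffices to compare $(\Delta_{\boxtimesc}\otimes\Id)\circ\Delta_{\boxtimesc}$ and $(\Id\otimes\Delta_{\boxtimesc})\circ\Delta_{\boxtimesc}$ on each $[\mathcal{M}^l]$. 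Expanding both sides, coassociativity reduces to the numerical identity
\[
\sum_{k\in I}N^{\mathcal{M}^k,\mathcal{M}^c}_{\mathcal{M}^l}N^{\mathcal{M}^a,\mathcal{M}^b}_{\mathcal{M}^k}
=\sum_{k\in I}N^{\mathcal{M}^a,\mathcal{M}^k}_{\mathcal{M}^l}N^{\mathcal{M}^b,\mathcal{M}^c}_{\mathcal{M}^k}
\]
for all $a,b,c,l\in I$.

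Next I apply Lemma \ref{Lm5.4} to each factor to convert these cointertwining fusion rules into fusion rules for the vertex operator algebra $V'$, which is rational by Proposition \ref{Prop2.12} and \cite{W}. Write $\mathcal{N}^i=(\mathcal{M}^i)'$. By Proposition \ref{Prop3.7}, $\mathcal{M}\mapsto\mathcal{M}'$ is a bijection on isomorphism classes, and it preserves simplicity: a submodule of $\mathcal{M}'$ would dualize to a quotient, hence a subcomodule, of $\mathcal{M}''\cong\mathcal{M}$. So $\{\mathcal{N}^i\}_{i\in I}$ is a complete set of irreducible admissible $V'$-modules. After this translation, the identity becomes
\[
\sum_k N^{\mathcal{N}^l}_{\mathcal{N}^k,\mathcal{N}^c}N^{\mathcal{N}^k}_{\mathcal{N}^a,\mathcal{N}^b}=\sum_k N^{\mathcal{N}^l}_{\mathcal{N}^a,\mathcal{N}^k}N^{\mathcal{N}^k}_{\mathcal{N}^b,\mathcal{N}^c}.
\]
This is exactly the multiplicity of $\mathcal{N}^l$ in $(\mathcal{N}^a\boxtimes_{V'}\mathcal{N}^b)\boxtimes_{V'}\mathcal{N}^c$ and in $\mathcal{N}^a\boxtimes_{V'}(\mathcal{N}^b\boxtimes_{V'}\mathcal{N}^c)$, respectively.

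To justify the multiplicity interpretation, I use rationality: $\mathcal{N}^a\boxtimes_{V'}\mathcal{N}^b\cong\bigoplus_k N^{\mathcal{N}^k}_{\mathcal{N}^a,\mathcal{N}^b}\mathcal{N}^k$. This follows from the universal property in Definition \ref{Def2.6}, since $\Hom_{V'}(\mathcal{N}^a\boxtimes\mathcal{N}^b,\mathcal{N}^k)\cong I_{V'}\tbinom{\mathcal{N}^k}{\mathcal{N}^a~\mathcal{N}^b}$. The tensor product also distributes over finite direct sums. The equality of the two sums then follows from the associativity of $\boxtimes_{V'}$ in Proposition \ref{Prop2.7}, by comparing isotypic multiplicities of the two isomorphic modules.

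The main obstacle is the bookkeeping in the translation step. Lemma \ref{Lm5.4} must be applied with the correct placement of the dual modules: the upper pair of comodules becomes the lower pair of modules, and the comodule in the bottom slot becomes the module in the top slot. I also need finiteness of the fusion rules and of the tensor-product decompositions so that the sums are finite. Both are guaranteed by the standing assumptions of this section together with the finiteness of $I$.
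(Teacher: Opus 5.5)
Your proposal is correct and follows essentially the same route as the paper: reduce to a numerical identity among cointertwining fusion rules of simple comodules, translate it with Lemma~\ref{Lm5.4} into fusion rules for $V'$, and conclude from the associativity of $\boxtimes_{V'}$ in Proposition~\ref{Prop2.7}; your multiplicity argument spells out what the paper calls obvious. One small slip: the fact that $\{(\mathcal{M}^i)'\}_{i\in I}$ is a complete set of irreducible $V'$-modules comes from Proposition~\ref{Prop2.12}(iii)--(iv) together with the graded double dual, not from Proposition~\ref{Prop3.7}, which concerns the contragredient $V$-comodule rather than the $V'$-module $\mathcal{M}'$.
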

\proof By linearity, we just need to prove
$$\Id\otimes\Delta_{\boxtimesc}\circ \Delta_{\boxtimesc}(\mathcal {M}^i)=\Delta_{\boxtimesc}\otimes\Id\circ \Delta_{\boxtimesc}(\mathcal {M}^i).$$

From Proposition \ref{Prop5.2}, the left hand side is equal to
$$\Id\otimes\Delta_{\boxtimesc}\circ \Delta_{\boxtimesc}(\mathcal {M}^i)=\sum_{a,b\in I}\sum_{c,d\in I}N^{\mathcal {M}^a,\mathcal {M}^b}_{\mathcal {M}^i}N^{\mathcal {M}^c,\mathcal {M}^d}_{\mathcal {M}^b}\mathcal {M}^a\boxtimes\mathcal {M}^c\boxtimes \mathcal {M}^d,$$
and the right hand side is equal to
$$\Delta_{\boxtimesc}\otimes\Id\circ \Delta_{\boxtimesc}(\mathcal {M}^i)=\sum_{b,d\in I}\sum_{a,c\in I}N^{\mathcal {M}^b,\mathcal {M}^d}_{\mathcal {M}^i}N^{\mathcal {M}^a,\mathcal {M}^c}_{\mathcal {M}^b}\mathcal {M}^a\boxtimes\mathcal {M}^c\boxtimes \mathcal {M}^d.$$
Hence, it is enough to show
$$\sum_{b\in I}N^{\mathcal {M}^a,\mathcal {M}^b}_{\mathcal {M}^i}N^{\mathcal {M}^c,\mathcal {M}^d}_{\mathcal {M}^b}=\sum_{b\in I}N^{\mathcal {M}^b,\mathcal {M}^d}_{\mathcal {M}^i}N^{\mathcal {M}^a,\mathcal {M}^c}_{\mathcal {M}^b}.$$
Using above lemma, this is equivalent to show
$$\sum_{b\in I}N^{(\mathcal {M}^i)'}_{(\mathcal {M}^a)',(\mathcal {M}^b)'}N^{(\mathcal {M}^b)'}_{(\mathcal {M}^c)',(\mathcal {M}^d)'}=\sum_{b\in I}N^{(\mathcal {M}^i)'}_{(\mathcal {M}^b)',(\mathcal {M}^d)'}N^{(\mathcal {M}^b)'}_{(\mathcal {M}^a)',(\mathcal {M}^c)'}.$$
This identity is obviously from Proposition \ref{Prop2.7}.                                          $\hfill\Box$

Now, we are ready to prove the following Theorem.
\begin{Theorem}
Recall the notation $\mathbb{K}[\mathcal{C}]=\mathbb{C}\otimes_{\mathbb{Z}}K[\mathcal{C}].$
For an admissible $V$-comodule $\mathcal {M}$, define $\epsilon_V(\mathcal {M})=\delta_{V,M},$ where $\delta$ is the Kronecker delta, extended $\epsilon_V$ to $\mathbb{K}[\mathcal{C}]$ linearly. Then $(\mathbb{K}[\mathcal{C}],\Delta_{\boxtimesc},\epsilon_V)$ is a cocommutative coassociative coalgebra.
\end{Theorem}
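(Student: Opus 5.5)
The approach is to check the three coalgebra axioms on the basis $\{[\mathcal {M}^i]\}_{i\in I}$ of $\mathbb{K}[\mathcal{C}]$ and then extend by linearity. Since $\mathcal{C}$ is semisimple with finitely many simple classes, $K[\mathcal{C}]$ is free on these classes. The proposition on the form of the cotensor decomposition gives
\[
\Delta_{\boxtimesc}([\mathcal {M}^i])=\sum_{a,b\in I}N^{\mathcal {M}^a,\mathcal {M}^b}_{\mathcal {M}^i}[\mathcal {M}^a]\otimes[\mathcal {M}^b],
\]
and this is well defined because all fusion rules are assumed finite. The formula is additive on direct sums, since $I_V\tbinom{\mathcal {N}^1~~\mathcal {N}^2}{\mathcal {M}\oplus\mathcal {M}'}=I_V\tbinom{\mathcal {N}^1~~\mathcal {N}^2}{\mathcal {M}}\oplus I_V\tbinom{\mathcal {N}^1~~\mathcal {N}^2}{\mathcal {M}'}$. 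So $\Delta_{\boxtimesc}$ descends to a linear map $\mathbb{K}[\mathcal{C}]\to\mathbb{K}[\mathcal{C}]\otimes\mathbb{K}[\mathcal{C}]$. Cocommutativity is exactly the Corollary following that proposition: by Proposition \ref{Prop4.3}, $N^{\mathcal {M}^a,\mathcal {M}^b}_{\mathcal {M}^i}=N^{\mathcal {M}^b,\mathcal {M}^a}_{\mathcal {M}^i}$. Coassociativity is Proposition \ref{Prop5.4}.

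The remaining and main step is the counit identity. Here $V$ is simple, so $V=\mathcal {M}^{0}$ for some $0\in I$. We need
\[
(\epsilon_V\otimes\Id)\Delta_{\boxtimesc}([\mathcal {M}^i])=\sum_b N^{V,\mathcal {M}^b}_{\mathcal {M}^i}[\mathcal {M}^b]=[\mathcal {M}^i],
\]
that is, $N^{V,\mathcal {M}^b}_{\mathcal {M}^i}=\delta_{b,i}$, and symmetrically $N^{\mathcal {M}^a,V}_{\mathcal {M}^i}=\delta_{a,i}$. The second follows from the first by Proposition \ref{Prop4.3}. For the first, I would apply Lemma \ref{Lm5.4}:
\[
N^{V,\mathcal {M}^b}_{\mathcal {M}^i}=N^{(\mathcal {M}^i)'}_{V',(\mathcal {M}^b)'}.
\]
Here $V'$ is the vertex operator algebra of Proposition \ref{Prop2.12}(ii). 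The identification of $V$ as a comodule over itself with the dual of the adjoint $V'$-module is immediate from the formula $(Y(f,z)g,v)=(f\otimes g,\Yup(z)v)$ there. Then I would invoke the standard fact for a rational vertex operator algebra $V'$ that $N^{\mathcal {N}_3}_{V',\mathcal {N}_2}=\delta_{\mathcal {N}_2\cong\mathcal {N}_3}$ for irreducible modules. This holds because $V'\boxtimes\mathcal {N}\cong\mathcal {N}$, which is the unit property of the tensor product in Proposition \ref{Prop2.7}. Finally, the map $\mathcal {M}\mapsto\mathcal {M}'$ is a bijection on isomorphism classes of simple objects by Proposition \ref{Prop3.7}. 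So $(\mathcal {M}^b)'\cong(\mathcal {M}^i)'$ if and only if $b=i$.

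The obstacle is this unit step. One has to make sure that Lemma \ref{Lm5.4} applies with $V$ itself in the first slot, since it needs finite-dimensional homogeneous subspaces, which is part of the definition of a graded vertex operator coalgebra. One also has to make sure that $V'$ being rational (from \cite{W}) gives the unit isomorphism $V'\boxtimes\mathcal {N}\cong\mathcal {N}$.

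If one prefers to avoid citing that fact, there is a direct route. The map $\mathcal {W}\mapsto$ (apply $\Res_{z}z^{-1}c\otimes\Id$) should identify $I_V\tbinom{V~~\mathcal {M}^b}{\mathcal {M}^i}$ with $\Hom_V(\mathcal {M}^i,\mathcal {M}^b)$, using the Jacobi identity (\ref{JI4.1}) and the $L_V(1)$-derivation property. The dimension of that Hom space is $\delta_{b,i}$ by a Schur lemma for simple comodules.
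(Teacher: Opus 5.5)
Your proposal is correct and follows essentially the paper's route: cocommutativity from Proposition \ref{Prop4.3}, coassociativity from Proposition \ref{Prop5.4}, and the counit reduced to $N^{V,\mathcal {M}^j}_{\mathcal {M}^i}=\delta_{i,j}$, with the case $j\neq i$ transferred via Lemma \ref{Lm5.4} to $I_{V'}\tbinom{(\mathcal {M}^i)'}{V'~~(\mathcal {M}^j)'}$ as you do (the paper handles $i=j$ directly from the structure map $\Yup_{\mathcal {M}^i}(z)$ and the counit identity (\ref{LUM3.1}), which is close to your alternative direct route). Two citations need fixing: that $(\mathcal {M}^b)'\cong(\mathcal {M}^i)'$ as $V'$-modules forces $b=i$ follows from the duality in Proposition \ref{Prop2.12}(iii)--(iv) (the graded dual of the $V'$-module $\mathcal {M}'$ is $\mathcal {M}$ again as a $V$-comodule), not from Proposition \ref{Prop3.7}, which concerns the contragredient comodule structure; and the unit property $V'\boxtimes\mathcal {N}\cong\mathcal {N}$ is a standard fact about rational vertex operator algebras, not part of Proposition \ref{Prop2.7}.
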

\proof From above discussions, we just need to prove the counit axioms. This is equivalent to show
\[
N^{V,\mathcal {M}^j}_{\mathcal {M}^i}=\delta_{i,j}.
\]

First, if $i=j$, the comodule structure map gives $N^{V,\mathcal {M}^i}_{\mathcal {M}^i}\geq1.$ On the other hand, from identity (\ref{LUM3.1}), we have $N^{V,\mathcal {M}^i}_{\mathcal {M}^i}=1.$

Second, if $j\neq i,$ for any cointertwining operator $\mathcal {W}(z)\in I_V\tbinom{V~~\mathcal {M}^j}{\mathcal {M}^i}$, from Lemma \ref{Lm5.4}, we have a intertwining operator $\mathcal {W}'(\cdot,z)\in I_{V'}\tbinom{(\mathcal {M}^i)'}{V'~~(\mathcal {M}^j)'}$. This is impossible. Hence we are done.

Now, it is trivial to verify that $(\mathbb{K}[\mathcal{C}],\Delta_{\boxtimesc},\epsilon_V)$ is a cocommutative coassociative coalgebra.                                         $\hfill\Box$

\section{Relationships to vertex operator algebras}
In this section, we assume $V$ is a simple and corational graded vertex operator coalgebra. Then $V'$ is a simple and rational vertex operator algebra. Let $\{\mathcal {M}^i\}_{i\in I}$ be the complete set of all simple admissible $V$-comodules and $\mathcal {C}$ the category of admissible $V$-comodules. From the construction of contragredient comodules and Proposition \ref{Prop2.12}, we can regard $\mathcal {C}^{op}$ as the category of admissible $V'$-modules. Let $K[\mathcal{C}]$(resp. $K[\mathcal{C}^{op}]$) be the Grothendieck group of $\mathcal{C}$(resp. $\mathcal {C}^{op}$) and set
\[
\mathbb{K}[\mathcal{C}]=\mathbb{C}\otimes_{\mathbb{Z}}K[\mathcal{C}],~~\mathbb{K}[\mathcal{C}^{op}]=\mathbb{C}\otimes_{\mathbb{Z}}K[\mathcal{C}^{op}].
\]

From section 5, we know $\mathbb{K}[\mathcal{C}]$ is a finite-dimensional cocommutative coassociative coalgebra. On the other hand, it is well-known that $\mathbb{K}[\mathcal{C}^{op}]$ is a finite-dimensional commutative associative algebra. Now we have the following results.
\begin{Proposition}
(i) $\mathbb{K}[\mathcal{C}^{op}]$ is the dual algebra of $\mathbb{K}[\mathcal{C}]$.

(ii) $\mathbb{K}[\mathcal{C}]$ is the dual coalgebra of $\mathbb{K}[\mathcal{C}^{op}]$.
\end{Proposition}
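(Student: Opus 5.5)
We will prove both statements at once by exhibiting a nondegenerate bilinear pairing between $\mathbb{K}[\mathcal{C}]$ and $\mathbb{K}[\mathcal{C}^{op}]$ under which the coproduct $\Delta_{\boxtimesc}$ is the transpose of the fusion product and $\epsilon_V$ is the transpose of the unit. Since both spaces are finite dimensional (there are finitely many simple objects), (i) and (ii) are then the same statement read in two directions. By Proposition \ref{Prop3.7} and Proposition \ref{Prop2.12}(iv), $\mathcal {M}\mapsto\mathcal {M}'$ is a bijection between isomorphism classes of simple admissible $V$-comodules and simple admissible $V'$-modules, so $\{[(\mathcal {M}^i)']\}_{i\in I}$ is a basis of $\mathbb{K}[\mathcal{C}^{op}]$.

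We define $\langle[(\mathcal {M}^i)'],[\mathcal {M}^j]\rangle=\delta_{i,j}$ and extend bilinearly. This pairing is nondegenerate, and it identifies $\mathbb{K}[\mathcal{C}^{op}]\cong\mathbb{K}[\mathcal{C}]^*$. In $\mathbb{K}[\mathcal{C}^{op}]$ the product is
\[
[(\mathcal {M}^a)']\cdot[(\mathcal {M}^b)']=\sum_{i\in I}N^{(\mathcal {M}^i)'}_{(\mathcal {M}^a)',(\mathcal {M}^b)'}[(\mathcal {M}^i)'],
\]
because $V'$ is rational, so $\boxtimes_{V'}$ decomposes with multiplicities given by the fusion rules (Proposition \ref{Prop2.7}). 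The dual algebra structure on $\mathbb{K}[\mathcal{C}]^*$ given by Proposition \ref{A-C}(i) is
\[
(f\cdot g)([\mathcal {M}^i])=\sum_{a,b}N^{\mathcal {M}^a,\mathcal {M}^b}_{\mathcal {M}^i}f([\mathcal {M}^a])g([\mathcal {M}^b]).
\]
This uses the formula for $\Delta_{\boxtimesc}$ in Proposition \ref{Prop5.2}, extended linearly to $\mathbb{K}[\mathcal{C}]$.

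Evaluating both products on basis elements, the two algebra structures agree exactly when $N^{\mathcal {M}^a,\mathcal {M}^b}_{\mathcal {M}^i}=N^{(\mathcal {M}^i)'}_{(\mathcal {M}^a)',(\mathcal {M}^b)'}$, and this is Lemma \ref{Lm5.4}. For the units, the unit of $\mathbb{K}[\mathcal{C}^{op}]$ is $[V']$ and the unit of the dual algebra is $\epsilon_V$. We need $\langle[V'],[\mathcal {M}^j]\rangle=\delta_{V,\mathcal {M}^j}$, which holds because $V$ is itself one of the simple comodules and its contragredient is $V'$ as a $V'$-module. This identification follows by comparing the formula of Theorem \ref{Thm3.6} with Definition \ref{Def2.3} and Proposition \ref{Prop2.12}(iv). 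This proves (i).

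For (ii), since $\dim\mathbb{K}[\mathcal{C}^{op}]<\infty$, Proposition \ref{A-C}(ii) gives $\mathbb{K}[\mathcal{C}^{op}]^\circ=\mathbb{K}[\mathcal{C}^{op}]^*$. Through the same pairing this space is canonically $\mathbb{K}[\mathcal{C}]$. The dual coproduct is $(\Delta f,x\otimes y)=f(xy)$. Evaluating it on $f=[\mathcal {M}^i]$ returns the same structure constants, again by Lemma \ref{Lm5.4}, and the dual counit $f\mapsto f([V'])$ recovers $\epsilon_V$. Alternatively, (ii) follows formally from (i) by finite-dimensional double duality.

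The main obstacle is the bookkeeping that makes the pairing canonical. Two points need checking. First, the product of $V'$-modules must be expanded in the basis $\{(\mathcal {M}^i)'\}$, which means every simple $V'$-module must be of the form $(\mathcal {M}^i)'$. We will get this from Proposition \ref{Prop2.12}(iii), which says the dual of a $V'$-module is a $V''=V$-comodule, combined with double duality. Second, the index placement in Lemma \ref{Lm5.4} must match the order of tensor factors in the coproduct. Any mismatch here is harmless, because both structures are commutative (Proposition \ref{Prop2.5} and Proposition \ref{Prop4.3}).
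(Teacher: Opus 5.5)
Your proposal is correct and follows essentially the same route as the paper: take $\{[(\mathcal{M}^i)']\}_{i\in I}$ as the dual basis of $\{[\mathcal{M}^i]\}_{i\in I}$, then reduce the compatibility of the fusion product with $\Delta_{\boxtimesc}$ to the identity $N^{\mathcal{M}^a,\mathcal{M}^b}_{\mathcal{M}^i}=N^{(\mathcal{M}^i)'}_{(\mathcal{M}^a)',(\mathcal{M}^b)'}$ of Lemma \ref{Lm5.4}. You go a little further than the paper by also checking that the unit $[V']$ and the counit $\epsilon_V$ correspond; this check is immediate, because the $V'$-module dual of the comodule $V$ from Proposition \ref{Prop2.12}(iv) is exactly the adjoint module of Proposition \ref{Prop2.12}(ii), so Theorem \ref{Thm3.6} and Definition \ref{Def2.3} are not needed there.
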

\proof (i) From Proposition \ref{Prop2.12}, we know $\mathbb{K}[\mathcal{C}^{op}]$ is the dual space of $\mathbb{K}[\mathcal{C}]$. Furthermore, $\{[(\mathcal {M}^i)']\}_{i\in I}$ is the dual basis of $\{[\mathcal {M}^i]\}_{i\in I}$. By linearity, it is enough to prove $$\left([(\mathcal {M}^i)']*[(\mathcal {M}^j)'],[\mathcal {M}^k]\right)=\left([(\mathcal {M}^i)']\otimes[(\mathcal {M}^j)'],\Delta_{\boxtimesc}([\mathcal {M}^k])\right).$$

By definition, the left hand side is equal to
$$\left(\sum_{s\in I}N^{(\mathcal {M}^s)'}_{(\mathcal {M}^i)',(\mathcal {M}^j)'}[(\mathcal {M}^s)'],[\mathcal {M}^k]\right)=N^{(\mathcal {M}^k)'}_{(\mathcal {M}^i)',(\mathcal {M}^j)'},$$
and the right hand side is equal to
$$\left([(\mathcal {M}^i)']\otimes[(\mathcal {M}^j)'],\sum_{s,t\in I}N_{\mathcal {M}^k}^{\mathcal {M}^s,\mathcal {M}^t}[\mathcal {M}^s]\otimes[\mathcal {M}^t]\right)=N_{\mathcal {M}^k}^{\mathcal {M}^i,\mathcal {M}^j}.$$
Now from Lemma \ref{Lm5.4}, we have $N^{(\mathcal {M}^k)'}_{(\mathcal {M}^i)',(\mathcal {M}^j)'}=N_{\mathcal {M}^k}^{\mathcal {M}^i,\mathcal {M}^j}.$ This proves (i).

The proof of (ii) is similar.                                                              $\hfill\Box$

\end{document}